\theoremstyle{plain}
\newtheorem{theorem}{Theorem}[section]
\newtheorem{lemma}[theorem]{Lemma}
\newtheorem{corollary}[theorem]{Corollary}
\newtheorem{proposition}[theorem]{Proposition}
\newtheorem{observation}[theorem]{Observation}
\newtheorem{question}[theorem]{Question}
\newtheorem{remark}[theorem]{Remark}
\theoremstyle{definition}
\newcommand{\diam}{\textnormal{diam}}
\def\finf{\mathop{{\rm I}\kern -.27 em {\rm F}}\nolimits}
\newcommand{\bdim}{\textnormal{bdim}}
\newcommand{\adim}{\textnormal{adim}}
\newcommand{\supp}{\textnormal{supp}}
\begin{document}


\title{Broadcast Dimension of Graphs}

\author{{\bf{Jesse Geneson}}$^1$ and {\bf{Eunjeong Yi}}$^2$\\
\small Iowa State University, Ames, IA 50011, USA$^1$\\
\small Texas A\&M University at Galveston, Galveston, TX 77553, USA$^{2}$\\
{\small\em geneson@iastate.edu}$^1$; {\small\em yie@tamug.edu}$^2$}

\maketitle

\date{}

\begin{abstract}
In this paper we initiate the study of broadcast dimension, a variant of metric dimension. Let $G$ be a graph with vertex set $V(G)$, and let $d(u,w)$ denote the length of a $u-w$ geodesic in $G$. For $k \ge 1$, let $d_k(x,y)=\min \{d(x,y), k+1\}$. A function $f: V(G) \rightarrow \mathbb{Z}^+ \cup \{0\}$ is called a \emph{resolving broadcast} of $G$ if, for any distinct $x,y \in V(G)$, there exists a vertex $z \in V(G)$ such that $f(z)=i>0$ and $d_{i}(x,z) \neq d_{i}(y,z)$. The \emph{broadcast dimension}, $\bdim(G)$, of $G$ is the minimum of $c_f(G)=\sum_{v \in V(G)} f(v)$ over all resolving broadcasts of $G$, where $c_f(G)$ can be viewed as the total cost of the transmitters (of various strength) used in resolving the entire network described by the graph $G$. Note that $\bdim(G)$ reduces to $\adim(G)$ (the adjacency dimension of $G$, introduced by Jannesari and Omoomi in 2012) if the codomain of resolving broadcasts is restricted to $\{0,1\}$. We determine its value for cycles, paths, and other families of graphs. We prove that $\bdim(G) = \Omega(\log{n})$ for all graphs $G$ of order $n$, and that the result is sharp up to a constant factor. We show that $\frac{\adim(G)}{\bdim(G)}$ and $\frac{\bdim(G)}{\dim(G)}$ can both be arbitrarily large, where $\dim(G)$ denotes the metric dimension of $G$. We also examine the effect of vertex deletion on the adjacency dimension and the broadcast dimension of graphs.
\end{abstract}

\noindent\small {\bf{Keywords:}} metric dimension, adjacency dimension, resolving broadcast, broadcast dimension\\
\small {\bf{2010 Mathematics Subject Classification:}} 05C12

\section{Introduction}

Let $G$ be a finite, simple, and undirected graph with vertex set $V(G)$ and edge set $E(G)$. The \emph{distance} between two vertices $x, y \in V(G)$, denoted by $d(x, y)$, is the length of a shortest path between $x$ and $y$ in $G$; if $x$ and $y$ belong to different components of $G$, we define $d(x,y)=\infty$. Metric dimension, introduced by Slater~\cite{slater} and by Harary and Melter~\cite{harary}, is a graph parameter that has been studied extensively. A vertex $z \in V(G)$ \emph{resolves} a pair of vertices $x,y \in V(G)$ if $d(x,z) \neq d(y,z)$. A set $S \subseteq V(G)$ is a \emph{resolving set} of $G$ if, for any distinct $x, y \in V(G)$, there exists $z \in S$ such that $d(x,z) \neq d(y,z)$. The \emph{metric dimension} of $G$, denoted by $\dim(G)$, is the minimum cardinality over all resolving sets of $G$. Khuller et al.~\cite{tree2} considered robot navigation as one of the applications of metric dimension, where a robot that moves from node to node knows its distances to all landmarks.

For $x\in V(G)$ and $S \subseteq V(G)$, let $d(x, S)=\min\{d(x,y) : y \in S\}$. Meir and Moon~\cite{distance_dom} introduced distance-$k$ domination. For a positive integer $k$, a set $D \subseteq V(G)$ is called a \emph{distance-$k$ dominating set} of $G$ if, for each $u \in V(G)- D$, $d(u, D) \le k$. The \emph{distance-$k$ domination number} of $G$, denoted by $\gamma_k(G)$, is the minimum cardinality over all distance-$k$ dominating sets of $G$; the distance-$1$ domination number is the well-known domination number. Erwin~\cite{erwin1, erwin2} introduced the concept of broadcast domination, where cities with broadcast stations have transmission power that enable them to broadcast messages to cities at distances greater than one, depending on the transmission power of broadcast stations. More explicitly, following~\cite{erwin1, erwin2}, a function $f: V(G) \rightarrow \{0,1,2,\ldots, \diam(G)\}$ is called a \emph{dominating broadcast} of $G$ if, for each vertex $x \in V(G)$, there exists a vertex $y \in V(G)$ such that $f(y)>0$ and $d(x,y) \le f(y)$. The \emph{broadcast (domination) number}, $\gamma_b(G)$, of G is the minimum of $D_f(G):=\sum_{v\in V(G)} f (v)$ over all dominating broadcasts $f$ of $G$; here, $D_f(G)$ can be viewed as the total cost of the transmitters used to achieve full coverage of a network of cities described via the graph $G$ being considered. Note that $\gamma_b(G)$ reduces to $k \cdot \gamma_k(G)$ if the codomain of dominating broadcasts is restricted to $\{0,k\}$. It is known that determining the domination number of a general graph is an NP-hard problem (see~\cite{NP}).

 Recently, Jannesari and Omoomi~\cite{adjacency_dim} introduced \emph{adjacency dimension} of $G$, denoted by $\adim(G)$, as a tool to study the metric dimension of lexicographic product graphs; they defined the adjacency distance between two vertices $x,y\in V(G)$ to be $0,1,2$, respectively, if $d(x,y)=0$, $d(x,y)=1$, and $d(x,y) \ge 2$. Adjacency resolving set and adjacency dimension are defined analogously in~\cite{adjacency_dim}. Assuming that a landmark that can detect long distance can be costly, the authors of~\cite{adjacency_dim} considered a robot that detects its position only from landmarks adjacent to it; this can be viewed as combining the concept of a resolving set and a dominating set. More generally, we can apply the concept of a distance-$k$ dominating set to a resolving set. If a robot can detect up to distance $k>0$ from each landmark, the minimum number of such landmarks to determine the robot's position on the graph is called the distance-$k$ dimension of $G$, denoted by $\dim_k(G)$; note that $\dim_1(G)=\adim(G)$. 

Now, we apply the concept of a dominating broadcast to a resolving set. For a positive integer $k$ and for $x,y \in V(G)$, let $d_k(x,y)=\min\{d(x,y),k+1\}$. Let $f: V(G) \rightarrow \mathbb{Z}^+ \cup \{0\}$ be a function. We define $\supp_G(f) = \left\{v \in V(G): f(v) > 0 \right\}$. We say that $f$ is a \emph{resolving broadcast} of $G$ if, for any distinct $x,y \in V(G)$, there exists a vertex $z \in \supp_G(f)$ such that $d_{f(z)}(x,z) \neq d_{f(z)}(y,z)$. The \emph{broadcast dimension} of $G$, denoted by $\bdim(G)$, is the minimum of $c_f(G)=\sum_{v \in V(G)} f(v)$ over all resolving broadcasts $f$ of $G$, where $c_f(G)$ can be viewed as the total cost of the transmitters (of various strength) used in resolving the entire network described via the graph $G$ being considered. Note that, if the codomain of resolving broadcasts is restricted to $\{0,k\}$, where $k$ is a positive integer, then $\bdim(G)$ reduces to $k \cdot \dim_k(G)$. For an ordered set $S=\{u_1, u_2, \ldots, u_k\} \subseteq V(G)$ of distinct vertices, the metric code, the adjacency code, and the broadcast code, respectively, of $v \in V(G)$ with respect to $S$ are the $k$-vectors $r_S(v) =(d(v, u_1), d(v, u_2), \ldots, d(v, u_k))$, $a_S(v) =(d_1(v, u_1), d_1(v, u_2), \ldots, d_1(v, u_k))$, and $b_S(v) =(d_{i_1}(v, u_1), d_{i_2}(v, u_2), \ldots, d_{i_k}(v, u_k))$, where $f(u_j)=i_j>0$ for a resolving broadcast $f$ being considered. It is known that determining the metric dimension (adjacency dimension, respectively) of a graph is an NP-hard problem; see~\cite{NP} (\cite{Juan}, respectively).

Suppose $f(x)$ and $g(x)$ are two functions defined on some subset of real numbers. We write $f(x)=O(g(x))$ if there exist positive constants $N$ and $C$ such that $|f(x)| \le C |g(x)|$ for all $x >N$, $f(x)=\Omega(g(x))$ if $g(x)=O(f(x))$, and $f(x)=\Theta(g(x))$ if $f(x)=O(g(x))$ and $f(x)=\Omega(g(x))$. 

In this paper, we initiate the study of broadcast dimension. In Section \ref{s:genres}, we discuss some general results on the metric dimension, the adjacency dimension, and the broadcast dimension of graphs. For example, it is easy to see that for any graph $G$, $\dim(G) \le \bdim(G) \le \adim(G)$. We also find the broadcast dimension of paths and cycles. In Section \ref{s:extremal}, we prove that $\bdim(G) = \Omega(\log{n})$ for all graphs $G$ of order $n$, and that the result is sharp up to a constant factor. We also characterize the family of graphs of adjacency dimension $k$ for each $k$. In Section \ref{s:highlow}, we characterize the graphs $G$ such that $\bdim(G)$ equals 1, 2, and $|V(G)|-1$. It is noteworthy that $\bdim(G)=2$ ($\adim(G)=2$, respectively) implies that $G$ is planar, whereas an example of non-planar graph $G$ with $\dim(G)=2$ was given in~\cite{tree2}. In Section \ref{s:comparing}, we provide graphs $G$ such that both $\adim(G)-\bdim(G)$ and $\bdim(G)-\dim(G)$ can be arbitrarily large. We also show that, for two connected graphs $G$ and $H$ with $H \subset G$, $\dim(H)-\dim(G)$ ($\bdim(H)-\bdim(G)$ and $\adim(H)-\adim(G)$, respectively) can be arbitrarily large. In addition, we find all trees $T$ such that $\bdim(T) = \dim(T)$. In Section \ref{s:delete}, we examine the effect of vertex deletion on adjacency dimension and broadcast dimension. We also investigate the effect of edge deletion on adjacency dimension. In Section \ref{s:open}, we conclude with some open problems.

We conclude the introduction with some terminology and notation that we will use throughout the paper. The \emph{diameter}, $\diam(G)$, of $G$ is $\max\{d(x,y): x,y \in V(G)\}$. The \emph{open neighborhood} of a vertex $v \in V(G)$ is $N(v)=\{u \in V(G) : uv \in E(G)\}$ and its \emph{closed neighborhood} is $N[v]=N(v) \cup \{v\}$. The \emph{degree} of a vertex $u$ in $G$, denoted by $\deg(u)$, is $|N(u)|$. An \emph{end vertex} is a vertex of degree one, and a \emph{major vertex} is a vertex of degree at least three. The \emph{join} of two graphs $H_1$ and $H_2$, denoted by $H_1 +H_2$, is the graph obtained from the disjoint union of two graphs $H_1$ and $H_2$ by joining every vertex of $H_1$ with every vertex of $H_2$. We denote by $P_n$, $C_n$, $K_n$, and $K_{m,n}$ respectively the path, cycle, and complete graph on $n$ vertices, and the complete bipartite graph with parts of size $m$ and $n$. We denote by ${\bf 1}_{\alpha}$ and ${\bf 2}_{\alpha}$, respectively, the $\alpha$-vector with 1 on each entry and the $\alpha$-vector with 2 on each entry.

\section{General results}\label{s:genres}

In this section, we discuss some general results on the metric dimension, the adjacency dimension, and the broadcast dimension of graphs. We also determine the broadcast dimension of paths and cycles. 
For distinct $u,w\in V(G)$, if $N(u)-\{w\}=N(w)-\{u\}$, then  $u$ and $w$ are called \emph{twin vertices} of $G$.

\begin{observation}\label{obs_twin}
Let $u$ and $w$ be twin vertices of a graph $G$. Then 
\begin{itemize}
\item[(a)] \emph{\cite{Hernando}} for any resolving set $S$ of $G$, $S \cap \{u, w\} \neq \emptyset$;
\item[(b)] \emph{\cite{adjacency_dim}} for any adjacency resolving set $A$ of $G$, $A \cap \{u, w\} \neq \emptyset$;
\item[(c)] for any resolving broadcast $f$ of $G$, $f(u)>0$ or $f(w)>0$.
\end{itemize}
\end{observation}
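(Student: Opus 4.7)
The plan is to prove only part (c), since (a) and (b) are cited from the literature. The guiding intuition is that twin vertices are externally indistinguishable in terms of graph distance, so the only way to separate them is for the broadcast $f$ to assign a positive value to one of them.

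First I would establish the following distance-preservation claim: if $u$ and $w$ are twins of $G$ and $z \in V(G) \setminus \{u,w\}$, then $d(u,z) = d(w,z)$. To see this, take a shortest $u$-$z$ path $u = v_0, v_1, \ldots, v_k = z$. In the case $v_1 \neq w$, the twin condition $N(u)\setminus\{w\} = N(w)\setminus\{u\}$ places $v_1$ in $N(w)$, giving a $w$-$z$ walk of length $k$ and hence $d(w,z) \le d(u,z)$. The case $v_1 = w$ can be excluded: since $z \neq w$ forces $k \ge 2$, the vertex $v_2$ satisfies $v_2 \in N(w) \setminus \{u\} = N(u) \setminus \{w\}$, so $v_2$ is adjacent to $u$, producing a $u$-$z$ walk $u, v_2, \ldots, v_k$ of length $k-1$ and contradicting minimality of the original path. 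A symmetric argument yields the reverse inequality.

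With the claim in hand, the truncated distances satisfy $d_i(u,z) = d_i(w,z)$ for every $i \ge 1$ and every $z \notin \{u,w\}$. I would then finish by contradiction: suppose $f$ is a resolving broadcast with $f(u) = f(w) = 0$. Applied to the distinct vertices $u$ and $w$, the defining property of a resolving broadcast produces some $z \in \supp_G(f)$ with $d_{f(z)}(u,z) \neq d_{f(z)}(w,z)$. But $f(u) = f(w) = 0$ forces $z \notin \{u,w\}$, and then the distance-preservation claim gives $d_{f(z)}(u,z) = d_{f(z)}(w,z)$, a contradiction.

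The main obstacle is the edge case in the distance-preservation claim where the shortest $u$-$z$ path begins with the edge $uw$ (which can only happen when $u$ and $w$ are adjacent, i.e.\ ``true twins''); handling it requires the rerouting argument through $v_2$ described above. Once that lemma is in place, (c) is an immediate consequence of the definition of a resolving broadcast.
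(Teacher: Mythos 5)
Your argument is correct and is exactly the standard justification the paper leaves implicit (the result is stated as an Observation without proof): twins satisfy $d(u,z)=d(w,z)$ for all $z\notin\{u,w\}$, hence $d_i(u,z)=d_i(w,z)$ for every $i$, so only a vertex of $\{u,w\}$ with positive broadcast value can resolve the pair. Your careful handling of the case where the shortest path starts with the edge $uw$ (rerouting through $v_2$) is the right way to make the distance-preservation lemma airtight, and the rest follows immediately from the definition of a resolving broadcast.
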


\begin{proposition}\emph{\cite{adjacency_dim}}\label{adj_bounds}
\begin{itemize}
\item[(a)] If $G$ is a connected graph, then $\adim(G) \ge \dim(G)$.
\item[(b)] If $G$ is a connected graph with $\diam(G)=2$, then $\adim(G)=\dim(G)$. Moreover, there exists a graph $G$ such that $\adim(G)=\dim(G)$ and $\diam(G)>2$.
\item[(c)] For every graph $G$, $\adim(G)=\adim(\overline{G})$, where $\overline{G}$ denotes the complement of $G$.
\end{itemize}
\end{proposition}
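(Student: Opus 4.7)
For part (a), the plan is to observe that any adjacency resolving set $A$ is automatically a metric resolving set. Indeed, $d_1(a,b)=\min\{d(a,b),2\}$ is a function of $d(a,b)$, so whenever some $z \in A$ satisfies $d_1(x,z) \neq d_1(y,z)$, it also satisfies $d(x,z) \neq d(y,z)$. Hence every adjacency basis is a resolving set, giving $\dim(G) \le \adim(G)$.

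For the first half of (b), suppose $\diam(G)=2$. Then $d(x,y) \in \{0,1,2\}$ for every pair $x,y$, so $d(x,y)=\min\{d(x,y),2\}=d_1(x,y)$, and the two notions of resolving set coincide; combined with (a) this yields $\adim(G)=\dim(G)$. For the ``moreover'' clause, I would exhibit $G=C_6$ with vertices $v_0,v_1,\ldots,v_5$ in cyclic order. A direct check with the ordered set $S=\{v_0,v_2\}$ gives the six adjacency codes $(0,2),(1,1),(2,0),(2,1),(2,2),(1,2)$, which are pairwise distinct; thus $\adim(C_6) \le 2$, and together with part (a) and the standard fact $\dim(C_6)=2$ this gives $\adim(C_6)=\dim(C_6)=2$ while $\diam(C_6)=3$.

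For part (c), the key observation is that for distinct vertices $u,v$ one has $d_1^G(u,v)=1$ precisely when $uv \in E(G)$ and $d_1^G(u,v)=2$ otherwise, while $d_1^{\overline{G}}(u,v)=1$ precisely when $uv \notin E(G)$ and $d_1^{\overline{G}}(u,v)=2$ otherwise; the value $0$ is attained in both graphs exactly when $u=v$. Hence the involution $\sigma$ of $\{0,1,2\}$ fixing $0$ and swapping $1$ and $2$ carries the adjacency code in $G$ of any vertex with respect to an ordered set $S$ to its adjacency code in $\overline{G}$ with respect to the same $S$, coordinate by coordinate. Since $\sigma$ is a bijection, it preserves distinctness of codes, so $S$ is an adjacency resolving set in $G$ if and only if it is one in $\overline{G}$; therefore $\adim(G)=\adim(\overline{G})$.

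The only real obstacle is the ``moreover'' clause in (b), where one must name an explicit graph of diameter exceeding $2$ with equal adjacency and metric dimensions; the rest is bookkeeping once one unpacks the definition of $d_1$ and uses the inequality of (a).
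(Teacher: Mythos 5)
Your proof is correct. The paper itself gives no proof of this proposition --- it is quoted from Jannesari and Omoomi's paper with a citation --- so there is nothing to compare against, but your arguments are the standard ones: (a) follows because $d_1$ is a function of $d$, so distinct $d_1$-values force distinct $d$-values; (b) follows because $d_1$ and $d$ coincide when $\diam(G)=2$, and your $C_6$ computation (codes $(0,2),(1,1),(2,0),(2,1),(2,2),(1,2)$ with respect to $\{v_0,v_2\}$) correctly witnesses the ``moreover'' clause, consistent with Proposition~\ref{adim_pc} which gives $\adim(C_6)=\lfloor 14/5\rfloor=2$; and (c) follows from the coordinatewise involution swapping the values $1$ and $2$, which preserves distinctness of adjacency codes.
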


\begin{observation}\label{obs_bdim}
\begin{itemize}
\item[(a)] For any graph $G$ of order $n \ge 2$, $1 \le \dim(G) \le \bdim(G) \le \adim(G) \le n-1$.
\item[(b)] For any graph $G$ with $\diam(G) \in \{1,2\}$, $\dim(G)=\bdim(G)=\adim(G)$.
\end{itemize}
\end{observation}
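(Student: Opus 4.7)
The plan is to prove part (a) by establishing the chain of inequalities left to right, and then to deduce part (b) by collapsing that chain using Proposition~\ref{adj_bounds}(b).

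For (a), I would handle the outer bounds first. The lower bound $\dim(G) \ge 1$ is immediate since $n \ge 2$ forces any resolving set to be nonempty, and $\adim(G) \le n-1$ follows by verifying that $V(G)\setminus\{v\}$ is an adjacency resolving set for any $v \in V(G)$: given distinct $x,y$, at least one of them (say $x$) differs from $v$, and then $d_1(x,x)=0 \ne d_1(y,x)$. For $\bdim(G) \le \adim(G)$, I would take an adjacency resolving set $A$ of minimum size and define $f$ to be its indicator function; then $c_f(G)=|A|$, and on $\supp_G(f)=A$ the value $f(z)=1$ makes the broadcast resolving condition at $z$ literally the adjacency resolving condition.

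The remaining inequality $\dim(G) \le \bdim(G)$ is the crux. Given a minimum resolving broadcast $f$, I would set $S=\supp_G(f)$ and observe $|S| \le c_f(G)$ since $f \ge 1$ on $S$. To finish, I would show $S$ is a metric resolving set: for any distinct $x,y \in V(G)$, choose $z \in S$ with $d_{f(z)}(x,z) \ne d_{f(z)}(y,z)$; since $d_k(\cdot,\cdot)=\min\{d(\cdot,\cdot),k+1\}$ is a function of $d(\cdot,\cdot)$ alone, $d(x,z)=d(y,z)$ would force $d_{f(z)}(x,z)=d_{f(z)}(y,z)$, a contradiction. Hence $z$ resolves $x,y$ in the standard sense, and $\dim(G) \le |S| \le c_f(G)=\bdim(G)$.

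For (b), if $\diam(G)=2$ then Proposition~\ref{adj_bounds}(b) gives $\adim(G)=\dim(G)$, so the sandwich in (a) collapses. If $\diam(G)=1$ then $G=K_n$, and since every pair of vertices of $K_n$ is a twin pair, Observation~\ref{obs_twin}(a) forces $\dim(K_n) \ge n-1$, while (a) gives $\adim(K_n) \le n-1$, again collapsing the chain. Nothing in the proof is genuinely hard; the only step that requires a moment of thought is the observation that truncating distances by $\min\{\cdot,k+1\}$ cannot create new separations, which is what converts a resolving broadcast into a resolving set in the proof of $\dim(G) \le \bdim(G)$.
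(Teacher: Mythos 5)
Your proof is correct and supplies exactly the routine verification that the paper omits (the statement is labelled an Observation and given without proof): the key points---that the support of a resolving broadcast is a resolving set because $d_k(\cdot,z)=\min\{d(\cdot,z),k+1\}$ cannot separate vertices that are equidistant from $z$, and that the indicator function of an adjacency resolving set is a resolving broadcast of the same cost---are the intended ones. Part (b) is also handled correctly, including the $\diam(G)=1$ case via twin vertices.
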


Next, we consider graphs $G$ with $\diam(G) \le 2$. For two graphs $H_1$ and $H_2$, $\diam(H_1+H_2) \le 2$; thus, $\dim(H_1+H_2)=\bdim(H_1+H_2)=\adim(H_1+H_2)$ by Observation~\ref{obs_bdim}(b).

\begin{theorem}\emph{\cite{wheel1, wheel2}}\label{dim_wheel}
For $n \ge 3$, 
\begin{equation*}
\dim(C_n+K_1)=\left\{
\begin{array}{ll}
3 & \mbox{ if } n \in \{3,6\},\\
\lfloor \frac{2n+2}{5}\rfloor & \mbox{ otherwise.} 
\end{array}\right.
\end{equation*}
\end{theorem}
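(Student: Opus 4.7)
The plan is to reduce the question to an adjacency-resolution problem on the cycle and then perform a careful gap analysis. Since $\diam(C_n+K_1)\le 2$ for all $n\ge 3$ (with equality for $n\ge 4$), Observation~\ref{obs_bdim}(b) yields $\dim(W_n)=\adim(W_n)$, so it suffices to work with the adjacency dimension; the $n=3$ case is immediate because $C_3+K_1\cong K_4$, whose metric (hence adjacency) dimension is $3$. The hub $h$ has adjacency distance $1$ to every cycle vertex, so it cannot distinguish any pair of cycle vertices, and after a short swap argument one may assume any minimum adjacency resolving set $S$ is a subset of $V(C_n)$.

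I then label $V(C_n)=\{v_0,\ldots,v_{n-1}\}$ cyclically, list the elements of $S=\{s_1,\ldots,s_k\}$ in cyclic order, and record the consecutive gaps $g_1,\ldots,g_k$ (so $\sum g_i=n$). Every adjacency coordinate is $0$, $1$, or $2$, and the cycle vertices split into four classes: (i) vertices of $S$ themselves (code with a unique $0$); (ii) boundary vertices $s_j\pm 1$ adjacent to exactly one element of $S$, which occur only when the adjacent gap has size at least $3$; (iii) the single ``size-$2$-middle'' vertex in each gap of size $2$, adjacent to both endpoints of that gap; and (iv) the $g_i-3$ ``interior'' vertices inside each gap with $g_i\ge 4$, adjacent to no element of $S$. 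Requiring all codes, together with the hub's all-$1$ code (when $h\notin S$), to be pairwise distinct forces three constraints on $(g_1,\ldots,g_k)$:
\begin{itemize}
\item[(A)] at most one interior vertex total, i.e.\ no $g_i\ge 5$ and at most one $g_i=4$;
\item[(B)] for each $j$, not both $g_{j-1}\ge 3$ and $g_j\ge 3$, since otherwise the two type-(ii) neighbors of $s_j$ share the same code;
\item[(C)] if $k=2$ then no $g_i=2$, for otherwise a type-(iii) vertex coincides with the hub's all-$1$ code.
\end{itemize}

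For the lower bound I would maximize $n=\sum g_i$ subject to (A)--(C) for a fixed $k$: by (B) the large gaps ($g_i\ge 3$) alternate with small gaps ($g_i\le 2$), each large gap equals $3$ except, by (A), at most one that equals $4$, and each small gap equals $2$ up to a parity adjustment. A short calculation gives $n\le \frac{5k+2}{2}$ for even $k$ and $n\le \frac{5k+1}{2}$ for odd $k$; inverting this yields $k\ge \lfloor(2n+2)/5\rfloor$ whenever constraints (A)--(C) admit any feasible sequence. The sporadic case $n=6$ is handled directly: the value $k=2$ given by the formula admits no pair $(g_1,g_2)$ satisfying all three constraints simultaneously, forcing $k\ge 3$. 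For the upper bound, for every $n\notin\{3,6\}$ I would exhibit an explicit gap sequence of length $\lfloor(2n+2)/5\rfloor$, obtained from the pattern $3,2,3,2,\ldots$ with one $3$ replaced by $4$ and, if necessary, a $2$ replaced by $1$ to match the sum, and then verify in a few lines that the resulting codes in classes (i)--(iv) together with the hub are all distinct.

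The principal obstacle will be handling the cyclic parity in constraint (B): one must show both that no gap sequence respecting (A)--(C) exceeds the stated maxima, for every parity of $k$, and that a realizing construction exists for each residue class of $n$ modulo $5$. The small-$n$ exceptions at $n=3$ and $n=6$ require a tailored direct check, since the alternating template is too coarse at that scale and the hub-collision constraint (C) becomes binding.
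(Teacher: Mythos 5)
This theorem is quoted in the paper from \cite{wheel1, wheel2} and is not proved there, so there is no in-paper argument to compare against; your proposal has to stand on its own, and it is essentially the standard gap-sequence proof from the cited sources. The reduction to adjacency resolution on the rim, the four vertex classes, and the constraints (A)--(C) are all correct and complete: I checked that no collision type is missing (two size-$2$ middles collide only when their endpoint pairs coincide, which for $k\ge 3$ cannot happen and for $k=2$ is already excluded by (C)), that the maxima $(5k+2)/2$ for even $k$ and $(5k+1)/2$ for odd $k$ are exactly $\max\{n:\lfloor(2n+2)/5\rfloor\le k\}$, so the inversion is tight, and that $n=6$ is precisely the value where (C) destroys the otherwise feasible $k=2$. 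Two points need to be written out rather than waved at. First, the ``short swap argument'' removing the hub $h$ from $S$ is not literally a swap: deleting $h$ from a resolving set can break the resolution of $h$ itself against a rim vertex (its coordinate $0$ was doing work). The clean fix is to note that if $h\in S$ then $S\cap V(C_n)$, of size $k-1$, must still satisfy (A) and (B) (though not (C), since $h$ now carries a $0$), and the resulting maximum $n$ for $k-1$ rim vertices without (C) never exceeds the maximum for $k$ rim vertices with (C); this also explains why the hub does not rescue $n=6$ at $k=2$. Second, the upper bound genuinely requires an explicit gap sequence for each residue of $n$ modulo $5$ (and each parity of $k$), e.g.\ $3,2,3,2,\dots$ with one $3$ promoted to $4$ and/or one $2$ demoted to $1$, together with the short code check; this is routine but is where the exceptional behaviour at $n\in\{3,6\}$ and the validity at $n=4,5$ (where $k=2$ and constraint (C) is active) must be confirmed by hand.
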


\begin{theorem}\emph{\cite{fan}}\label{dim_fan}
For $n \ge 1$, 
\begin{equation*}
\dim(P_n+K_1)=\left\{
\begin{array}{ll}
1 & \mbox{ if } n=1,\\
2 & \mbox{ if } n \in \{2,3\},\\
3 & \mbox{ if } n=6,\\
\lfloor \frac{2n+2}{5}\rfloor & \mbox{ otherwise.} 
\end{array}\right.
\end{equation*}
\end{theorem}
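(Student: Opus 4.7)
\emph{Plan.} Since $\diam(P_n+K_1)\le 2$ for every $n\ge 1$, Observation~\ref{obs_bdim}(b) makes $\dim(P_n+K_1)=\adim(P_n+K_1)$, and all nontrivial distances lie in $\{1,2\}$. Label the apex $v_0$ and the path vertices $u_1,\dots,u_n$. The apex is adjacent to every other vertex, so it contributes the same entry in every path vertex's code and cannot separate two path vertices; one may therefore take a minimum resolving set $S\subseteq\{u_1,\dots,u_n\}$. For such $S$, the adjacency code of $u_j$ is determined by $N(u_j)\cap S\subseteq S$, a subset of size at most $2$ when $u_j\notin S$, while $v_0$ has code equal to $S$ itself. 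The task reduces to choosing $S$ so that these $n+1$ subsets of $S$ are pairwise distinct.

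Dispatch the small/exceptional cases $n\in\{1,2,3,6\}$ directly. The cases $n\le 3$ follow from $|V(F_n)|-1$ together with twin considerations via Observation~\ref{obs_twin}(a). For $n=6$, check that $\{u_1,u_2,u_4\}$ is resolving, and rule out $|S|=2$ by a short finite case check: either $S=\{u_s,u_{s+2}\}$, whose middle position $u_{s+1}$ has code $S$ and collides with $v_0$, or the complement of $S$ contains a gap of length $\ge 2$ forcing two non-landmarks to share a common empty or singleton code.

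For the generic upper bound $n\notin\{1,2,3,6\}$, construct an explicit resolving set of size $t=\lfloor (2n+2)/5\rfloor$. A convenient template (with a small adjustment depending on $n\bmod 5$) is to place pairs of landmarks of the form $\{u_{5i+a},u_{5i+a+2}\}$ along the path, together with a boundary correction. Such a set produces pair codes $\{u_{5i+a},u_{5i+a+2}\}$ at the intermediate positions, singleton codes $\{u_{5i+a}\}$ or $\{u_{5i+a+2}\}$ on the outer neighbours of each block, at most one globally empty code, and the full set $S$ for the hub~$v_0$; verifying pairwise distinctness is routine once the construction is pinned down for each residue.

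For the matching lower bound, let $S$ be any resolving set of size $t$ and encode its gap structure: with landmark positions $0=k_0<k_1<\dots<k_t<k_{t+1}=n+1$, set $g_i=k_{i+1}-k_i-1$, so $\sum_{i=0}^t g_i = n-t$. Direct code comparisons inside each gap yield the constraints (i) every internal gap has length at most $3$; (ii) each boundary gap has length at most $2$; (iii) for each landmark $u_{k_i}$, its two adjacent gaps cannot simultaneously be "long" (in a sense whose exact cut-off depends on whether $u_{k_i}$ is interior or a boundary landmark), lest the two flanking non-landmarks share the singleton code $\{u_{k_i}\}$; and (iv) globally at most one non-landmark position carries the empty code. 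Optimising $\sum g_i$ subject to (i)--(iv) then yields $2n+2\le 5t$, and hence $t\ge\lfloor (2n+2)/5\rfloor$ for $n\ne 6$. The main obstacle is the careful bookkeeping in (iii), where the asymmetry between interior and boundary landmarks must be combined with (ii) and (iv); $n=6$ appears as a genuine exception because (i)--(iv) alone just barely permit $t=2$, but then the additional requirement that $v_0$'s code $S$ be distinct from every non-landmark code rules out every two-element $S$, forcing $t\ge 3$.
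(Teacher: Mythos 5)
Theorem~\ref{dim_fan} is quoted from \cite{fan}; the paper contains no proof of it, so there is nothing internal to compare your argument against. Judged on its own, your outline follows the natural (and essentially the standard) route: since $\diam(P_n+K_1)\le 2$, only adjacency matters, the apex separates no pair of path vertices, and the problem becomes a combinatorial question about which subsets $N(u_j)\cap S$ arise. The constraints (i)--(iv) you extract on the gap sequence are correct, and the exceptional role of $n=6$ (where the apex's code $S$ collides with the middle of a two-element resolving set) is correctly identified. Two small points need a line of justification each: the claim that a minimum resolving set may be taken inside the path (if $v_0\in S$, one must check that $S-\{v_0\}$ still separates $v_0$ from any path vertex adjacent to all of $S-\{v_0\}$, which is only an issue when $|S|\le 3$), and the upper-bound construction, which is left generic ``up to a boundary correction'' rather than written down for each residue of $n$ modulo $5$.

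The genuine gap is in the sentence ``optimising $\sum g_i$ subject to (i)--(iv) then yields $2n+2\le 5t$.'' This is the entire content of the lower bound, and it does not follow from (i)--(iv) by the obvious summation. Pairing each landmark with its two adjacent gaps and using $\min(g_{i-1},g_i)\le 1$, $\max\le 3$ gives only $n\le 3t+O(1)$; adding (iv) na\"{\i}vely still leaves $n\le(5t+4)/2$ in the case where the one permitted empty-code gap is an internal gap of length $3$ or a boundary gap of length $2$, which for $t=3$ would allow $n=9$ whereas the theorem requires $n\le 8$. Closing this requires tracking the interaction between the unique empty-code gap and the flanking constraint (iii) at \emph{both} of its incident landmarks (e.g.\ a length-$3$ internal gap adjacent to the leftmost landmark forces $g_0=0$, and a boundary gap of length $2$ together with the opposite boundary gap being $\ge 1$ forces both second-from-the-end gaps to be $\le 1$). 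These case distinctions are exactly where the bound $5t\ge 2n-2$ comes from, and they are the part you have labelled ``careful bookkeeping'' without performing it. So the plan is sound and I see no step that would fail, but as written it is a proof sketch whose decisive extremal step is asserted rather than proved.
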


Proposition~\ref{adj_bounds}(b), along with Theorems~\ref{dim_wheel} and~\ref{dim_fan}, implies the following proposition.

\begin{proposition}\emph{\cite{adjacency_dim}}\label{adim_wheel}
For $n \ge 7$, if $G \in \{P_n, C_n\}$, then $\adim(G+K_1)=\lfloor\frac{2n+2}{5}\rfloor$.
\end{proposition}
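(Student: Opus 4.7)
The plan is to reduce the claim directly to the two cited theorems by invoking Proposition~\ref{adj_bounds}(b). The key observation is that for any graph $H$, the join $H + K_1$ has diameter at most $2$: the apex vertex $v$ of $K_1$ is adjacent to every vertex of $H$, so any two non-adjacent vertices of $H$ are still at distance $2$ through $v$, and $v$ itself is at distance $1$ from everything. For $n \geq 7$, both $P_n$ and $C_n$ contain non-adjacent vertex pairs, so $\diam(G + K_1) = 2$ exactly when $G \in \{P_n, C_n\}$.

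Given $\diam(G+K_1) = 2$, Proposition~\ref{adj_bounds}(b) yields the identity $\adim(G+K_1) = \dim(G+K_1)$. It then remains to read off the value of $\dim(G+K_1)$ from the known formulas.

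For $G = C_n$ with $n \geq 7$ (so $n \notin \{3,6\}$), Theorem~\ref{dim_wheel} gives $\dim(C_n + K_1) = \lfloor \frac{2n+2}{5} \rfloor$. For $G = P_n$ with $n \geq 7$ (so $n \notin \{1,2,3,6\}$), Theorem~\ref{dim_fan} gives $\dim(P_n + K_1) = \lfloor \frac{2n+2}{5} \rfloor$. Combining these with the diameter-$2$ collapse yields the claimed common value $\adim(G+K_1) = \lfloor \frac{2n+2}{5} \rfloor$ for both $G \in \{P_n, C_n\}$.

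There is no real obstacle here; the only content not fully packaged in the cited results is the trivial diameter computation for the join, which follows immediately from the definition of $H_1 + H_2$ recalled in the introduction. So the proof is essentially one line of verification followed by citation.
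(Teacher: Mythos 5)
Your proposal is correct and matches the paper's own derivation: the paper likewise observes that $\diam(G+K_1)\le 2$, applies Proposition~\ref{adj_bounds}(b) to get $\adim(G+K_1)=\dim(G+K_1)$, and reads off the value from Theorems~\ref{dim_wheel} and~\ref{dim_fan}. No issues.
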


As an immediate consequence of Observation~\ref{obs_bdim}(b) and Theorems~\ref{dim_wheel} and~\ref{dim_fan}, we have the following corollary.

\begin{corollary}
For $n \ge 3$, let $G \in \{P_n, C_n\}$. Then \begin{equation*}
\bdim(G+K_1)=\left\{
\begin{array}{ll}
2 & \mbox{ if } n=3 \mbox{ and } G=P_3,\\
3 & \mbox{ if } n=3 \mbox{ and } G=C_3,\\
3 & \mbox{ if } n=6,\\
\lfloor \frac{2n+2}{5}\rfloor & \mbox{ otherwise.} 
\end{array}\right.
\end{equation*}
\end{corollary}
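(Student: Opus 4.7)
The plan is to exploit the fact that the join $G+K_1$ always has diameter at most $2$, so Observation~\ref{obs_bdim}(b) collapses the broadcast dimension to the metric dimension, which has already been computed in Theorems~\ref{dim_wheel} and~\ref{dim_fan}. The corollary is then pure bookkeeping: match up the exceptional values in those two theorems with the four cases listed in the statement.

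First, I would observe that every vertex of $G+K_1$ is either the apex vertex (the $K_1$) or a vertex of $G$, and the apex is adjacent to every other vertex, so any two vertices of $G+K_1$ are at distance at most $2$. Hence $\diam(G+K_1) \le 2$, and Observation~\ref{obs_bdim}(b) gives $\bdim(G+K_1) = \dim(G+K_1)$. This reduces the problem entirely to reading off $\dim(G+K_1)$ from the previous theorems.

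Next, I would split on whether $G = P_n$ or $G = C_n$ and substitute into Theorem~\ref{dim_fan} or Theorem~\ref{dim_wheel}. For $n=3$ the two source theorems disagree, producing the first two lines of the corollary: Theorem~\ref{dim_fan} gives $\dim(P_3+K_1)=2$, while Theorem~\ref{dim_wheel} gives $\dim(C_3+K_1)=3$. For $n=6$ both theorems return $3$, merging into the single case $\bdim=3$. For every remaining $n \ge 4$ (that is, $n \in \{4,5\}$ or $n \ge 7$), both theorems yield the common value $\lfloor \frac{2n+2}{5}\rfloor$, which consolidates the paths and cycles into the "otherwise" clause.

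There is no genuine obstacle here; the only care needed is to verify that the exceptional-case lists in Theorems~\ref{dim_wheel} and~\ref{dim_fan} line up correctly. In particular, one should check that for $n \in \{4,5\}$ the formula $\lfloor\frac{2n+2}{5}\rfloor$ really is what Theorem~\ref{dim_fan} returns (since those values fall into its generic clause), so that they do not become hidden exceptions in the corollary. Once this routine comparison is done, the statement follows immediately.
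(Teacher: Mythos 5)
Your proposal is correct and follows exactly the paper's route: the paper derives this corollary immediately from Observation~\ref{obs_bdim}(b) (since $\diam(G+K_1)\le 2$) together with Theorems~\ref{dim_wheel} and~\ref{dim_fan}, which is precisely your argument. The case bookkeeping you describe (including checking $n\in\{4,5\}$ fall into the generic clause) is accurate.
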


The metric dimension and the adjacency dimension, respectively, of a complete k-partite graphs was determined in~\cite{kpartite} and~\cite{adjacency_dim}.

\begin{theorem}\emph{\cite{adjacency_dim, kpartite}}\label{adim_kpartite}
For $k \ge 2$, let $G=K_{a_1, a_2, \ldots, a_k}$ be a complete $k$-partite graph of order $n=\sum_{i=1}^{k}a_i$. Let $s$ be the number of partite sets of $G$ consisting of exactly one element. Then 
\begin{equation*}
\dim(G)=\adim(G)=\left\{
\begin{array}{ll}
n-k & \mbox{ if } s=0,\\
n+s-k-1 & \mbox{ if } s \neq 0.
\end{array}\right.
\end{equation*}
\end{theorem}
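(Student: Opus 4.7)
The plan is to leverage Observation~\ref{obs_bdim}(b): every complete $k$-partite graph with $k\ge 2$ has diameter at most $2$, so $\dim(G)=\bdim(G)=\adim(G)$, and it suffices to prove the claimed formula for $\dim(G)$ (equivalently, for $\adim(G)$). The whole argument will then boil down to (i) identifying twin vertices, (ii) using Observation~\ref{obs_twin} to force a lower bound, and (iii) constructing a matching resolving set.

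First I would identify all twin pairs. If $V_1,\ldots,V_k$ are the partite sets with $|V_i|=a_i$, then any two vertices in the same $V_i$ with $a_i\ge 2$ satisfy $N(u)-\{w\}=N(w)-\{u\}=V(G)\setminus V_i$, so they are twins. Moreover, if $V_i=\{v_i\}$ and $V_j=\{v_j\}$ are two singleton parts, then $v_i$ and $v_j$ are each adjacent to every other vertex of $G$, so $N(v_i)-\{v_j\}=N(v_j)-\{v_i\}=V(G)\setminus\{v_i,v_j\}$, making $v_i$ and $v_j$ twins as well. By Observation~\ref{obs_twin}(a), every resolving set $S$ contains at least $a_i-1$ vertices from each non-singleton $V_i$ and at least $s-1$ of the $s$ singleton vertices. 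Summing gives
\begin{equation*}
|S|\ge \sum_{i:\,a_i\ge 2}(a_i-1)+\max\{s-1,0\}=(n-s)-(k-s)+\max\{s-1,0\},
\end{equation*}
which simplifies to $n-k$ when $s=0$ and to $n+s-k-1$ when $s\ge 1$.

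For the upper bound I would exhibit a resolving set $S$ of the stated size by choosing $a_i-1$ vertices from each non-singleton $V_i$ (omitting one designated vertex $u_i\in V_i$) and, when $s\ge 1$, $s-1$ of the singleton vertices (omitting one singleton $u_j$). Every vertex $v\notin S$ then lies in its own distinct partite set, so its adjacency code $a_S(v)$ equals $2$ exactly on the coordinates indexed by $S\cap V(v)$ (where $V(v)$ is $v$'s part) and equals $1$ on all other coordinates. Distinct omitted vertices $v,v'$ lie in different parts, so the $2$-entries of $a_S(v)$ and $a_S(v')$ occupy disjoint coordinate subsets; unless both subsets are empty, the codes differ. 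Both subsets are empty only when $v,v'$ are both omitted singletons, which cannot happen since at most one singleton is omitted. This shows $S$ is an adjacency resolving set, which is also a resolving set since $\diam(G)\le 2$.

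The argument has no real obstacle; the only subtle point is the bookkeeping distinction between $s=0$ and $s\ge 1$, coming from the fact that singletons form a twin class among themselves only when there are at least two of them. Once that case split is handled, combining the twin lower bound with the explicit construction yields $\dim(G)=\adim(G)=n-k$ if $s=0$ and $\dim(G)=\adim(G)=n+s-k-1$ if $s\ne 0$, as claimed.
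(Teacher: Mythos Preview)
Your argument is correct. The twin analysis is accurate: each non-singleton part $V_i$ is a twin class of size $a_i$, and the singleton parts together form a single twin class of size $s$, so Observation~\ref{obs_twin}(a) forces $|S|\ge (n-k)+\max\{s-1,0\}$. Your construction of $S$ and the verification via adjacency codes are also fine; in particular, the omitted non-singleton representatives have pairwise disjoint nonempty blocks of $2$'s, while the single omitted singleton (when $s\ge 1$) is the unique vertex outside $S$ with an all-$1$ code.

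That said, the paper does not give its own proof of this statement: Theorem~\ref{adim_kpartite} is quoted from \cite{adjacency_dim,kpartite} and stated without proof, so there is nothing in the paper to compare your argument against. Your proof is the standard one (and essentially the one appearing in the cited references), relying on the twin structure of complete multipartite graphs together with the observation that $\diam(G)\le 2$ collapses $\dim$, $\bdim$, and $\adim$.
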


As an immediate consequence of Observation~\ref{obs_bdim}(b) and Theorem~\ref{adim_kpartite}, we have the following corollary.

\begin{corollary}
For $k \ge 2$, let $G=K_{a_1, a_2, \ldots, a_k}$ be a complete $k$-partite graph of order $n=\sum_{i=1}^{k}a_i$. Let $s$ be the number of partite sets of $G$ consisting of exactly one element. Then 
\begin{equation*}
\bdim(G)=\left\{
\begin{array}{ll}
n-k & \mbox{ if } s=0,\\
n+s-k-1 & \mbox{ if } s \neq 0.
\end{array}\right.
\end{equation*}
\end{corollary}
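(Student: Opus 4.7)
The plan is to reduce the statement directly to Theorem~\ref{adim_kpartite} via Observation~\ref{obs_bdim}(b), since the corollary is flagged as an ``immediate consequence'' of those two results. The only substantive point I need to verify is that every complete $k$-partite graph with $k\ge 2$ has diameter at most $2$: any two vertices in different partite sets are adjacent (distance $1$), and any two vertices in the same partite set share any common neighbor in a different partite set, which exists precisely because $k\ge 2$ (distance $2$). So in every case covered by the statement, $\diam(G)\in\{1,2\}$.

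With this diameter bound, Observation~\ref{obs_bdim}(b) immediately gives $\dim(G)=\bdim(G)=\adim(G)$, and Theorem~\ref{adim_kpartite} supplies the exact value in each of the two cases $s=0$ and $s\ne 0$. There is essentially no obstacle here: the diameter computation is a one-line case check, and the conclusion is just a substitution into the known formula for $\adim(G)$. The only point worth flagging as a sanity check is the degenerate case $G=K_n$, where all $a_i=1$ and so $s=k=n$ and $\diam(G)=1$; Observation~\ref{obs_bdim}(b) still applies, and the formula $n+s-k-1=n-1$ correctly recovers the well-known value $\bdim(K_n)=n-1$ already recorded in Observation~\ref{obs_bdim}(a).
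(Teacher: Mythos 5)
Your proposal is correct and follows exactly the paper's intended route: the corollary is stated there as an immediate consequence of Observation~\ref{obs_bdim}(b) and Theorem~\ref{adim_kpartite}, and your verification that $\diam(G)\le 2$ for $k\ge 2$ is the only detail the paper leaves implicit. The $K_n$ sanity check is a nice touch but not needed.
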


Now, we recall the metric dimension of the Petersen graph.

\begin{theorem}\emph{\cite{petersen}}\label{dim_petersen}
For the Petersen graph $\mathcal{P}$, $\dim(\mathcal{P})=3$.
\end{theorem}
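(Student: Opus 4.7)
The plan is to prove $\dim(\mathcal{P})=3$ by establishing matching lower and upper bounds. The key structural facts about the Petersen graph $\mathcal{P}$ that I will exploit are that it has $10$ vertices, is $3$-regular, and has diameter $2$.

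For the lower bound $\dim(\mathcal{P}) \ge 3$, I will use a pigeonhole argument based on $\diam(\mathcal{P}) = 2$. Suppose $S \subseteq V(\mathcal{P})$ with $|S|=k$ is a resolving set. For any vertex $v \in V(\mathcal{P}) \setminus S$, each coordinate of $r_S(v)$ lies in $\{1,2\}$, so $r_S(v) \in \{1,2\}^k$, giving at most $2^k$ possible codes. Since the map $v \mapsto r_S(v)$ must be injective on $V(\mathcal{P}) \setminus S$, we need $2^k \ge 10-k$. Taking $k=2$ yields $4 \ge 8$, which fails, so $\dim(\mathcal{P}) \ge 3$.

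For the upper bound $\dim(\mathcal{P}) \le 3$, I will exhibit an explicit resolving set of size $3$. Label the outer $5$-cycle $1$-$2$-$3$-$4$-$5$-$1$, the inner pentagram $6$-$8$-$10$-$7$-$9$-$6$, and the spokes $i$-$(i+5)$ for $i \in \{1,2,3,4,5\}$. Take $S = \{1,7,8\}$ and compute the codes
\begin{align*}
r_S(1) &= (0,2,2), & r_S(6) &= (1,2,1), \\
r_S(2) &= (1,1,2), & r_S(7) &= (2,0,2), \\
r_S(3) &= (2,2,1), & r_S(8) &= (2,2,0), \\
r_S(4) &= (2,2,2), & r_S(9) &= (2,1,2), \\
r_S(5) &= (1,2,2), & r_S(10) &= (2,1,1).
\end{align*}
A direct inspection shows all ten vectors are distinct, so $S$ resolves $\mathcal{P}$.

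The only mildly delicate step is the code verification above, which requires correctly listing the neighborhoods $N(1)=\{2,5,6\}$, $N(7)=\{2,9,10\}$, and $N(8)=\{3,6,10\}$ and using $d(x,y)=2$ for all non-adjacent distinct pairs (since $\diam(\mathcal{P})=2$). There is no conceptual obstacle: the lower bound is a clean information-theoretic pigeonhole on codes in $\{1,2\}^k$ forced by diameter $2$, and the upper bound reduces to checking a $10 \times 3$ table of distances.
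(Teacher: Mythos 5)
The paper states this result as a citation to the literature and gives no proof of its own, so there is nothing internal to compare against; your argument stands on its own and it is correct. I checked the details: with the labelling you chose (outer cycle $1$-$2$-$3$-$4$-$5$-$1$, inner pentagram $6$-$8$-$10$-$7$-$9$-$6$, spokes $i$-$(i+5)$), the neighborhoods $N(1)=\{2,5,6\}$, $N(7)=\{2,9,10\}$, $N(8)=\{3,6,10\}$ are right, all ten codes in your table are computed correctly, and they are pairwise distinct, so $S=\{1,7,8\}$ is a resolving set. Your lower bound is exactly the bound $f(n,d)\le\dim(G)$ of Theorem~\ref{dim_bounds} quoted in the paper (here $f(10,2)=3$ since $2+2^2<10\le 3+2^3$), instantiated with a direct pigeonhole on the $2^k$ possible codes in $\{1,2\}^k$ available to the $10-k$ vertices outside $S$ when the diameter is $2$. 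One could alternatively get the lower bound by noting that a graph of metric dimension $1$ or $2$ on $10$ vertices with diameter $2$ is impossible, or by ruling out $2$-element resolving sets using vertex-transitivity, but your counting argument is the cleanest route and matches the spirit of the extremal bounds in Section~\ref{s:extremal}.
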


Since $\diam(\mathcal{P})=2$, Observation~\ref{obs_bdim}(b) and Theorem~\ref{dim_petersen} imply the following corollary.

\begin{corollary}
For the Petersen graph $\mathcal{P}$, $\bdim(\mathcal{P})=\adim(\mathcal{P})=3$.
\end{corollary}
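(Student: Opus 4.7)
The plan is simply to combine the two results already available, exactly as the paragraph preceding the corollary advertises. The key fact one needs is that the Petersen graph $\mathcal{P}$ has $\diam(\mathcal{P})=2$: any two non-adjacent vertices of $\mathcal{P}$ lie on a common $5$-cycle and hence share a neighbor. Once this is in hand, Observation~\ref{obs_bdim}(b) collapses the three parameters, giving $\dim(\mathcal{P})=\bdim(\mathcal{P})=\adim(\mathcal{P})$, and Theorem~\ref{dim_petersen} supplies the numerical value $\dim(\mathcal{P})=3$. So my proposed ``proof'' is essentially the one-line citation that the authors already offer.

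If one wishes to make the underlying mechanism explicit (since Observation~\ref{obs_bdim}(b) is the only nontrivial ingredient), the reasoning is as follows. For every $k\ge 1$ the truncated distance $d_k(x,y)=\min\{d(x,y),k+1\}$ coincides with $d(x,y)$ whenever $d(x,y)\le 2$. Hence in a graph $G$ of diameter at most $2$, given any resolving broadcast $f$, the modification $f'$ defined by $f'(v)=1$ if $f(v)>0$ and $f'(v)=0$ otherwise is still a resolving broadcast, and $c_{f'}(G)=|\supp_G(f)|\le c_f(G)$. Thus $\bdim(G)$ is attained by a $\{0,1\}$-valued broadcast, giving $\bdim(G)=\adim(G)$; combined with Observation~\ref{obs_bdim}(a), this forces $\dim(G)=\bdim(G)=\adim(G)$. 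Applying this to $G=\mathcal{P}$ and invoking Theorem~\ref{dim_petersen} yields the desired equalities, all equal to $3$. There is no real obstacle: the work was done in establishing Observation~\ref{obs_bdim}(b) and Theorem~\ref{dim_petersen}, and the corollary is a one-step deduction.
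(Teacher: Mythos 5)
Your proof is correct and matches the paper's: the corollary is deduced exactly as you say, by noting $\diam(\mathcal{P})=2$ and combining Observation~\ref{obs_bdim}(b) with Theorem~\ref{dim_petersen}. Your added justification of Observation~\ref{obs_bdim}(b) (truncating a resolving broadcast to a $\{0,1\}$-valued one in a diameter-$2$ graph) is sound but not needed beyond what the paper already cites.
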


Next, we consider paths and cycles.

\begin{proposition}\emph{\cite{adjacency_dim}}\label{adim_pc}
For $n \ge 4$, $\adim(P_n)=\adim(C_n)=\lfloor \frac{2n+2}{5}\rfloor$.
\end{proposition}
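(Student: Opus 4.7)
The plan is to establish matching upper and lower bounds $\adim(P_n) = \lfloor (2n+2)/5 \rfloor$ and argue analogously for $C_n$. The observation driving both directions is that for a non-landmark $v$, the adjacency code $a_S(v)$ is completely determined by the set $N(v) \cap S$; two non-landmarks collide precisely when they have identical landmark-neighbourhoods.

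For the upper bound I would exhibit an explicit adjacency resolving set of size $\lfloor (2n+2)/5 \rfloor$ using a pattern that repeats about every five vertices. Labelling $V(P_n) = \{v_1, \ldots, v_n\}$, I would place landmarks in ``pairs'' (two landmarks at graph-distance $1$ or $2$), with consecutive pairs separated by two non-landmark vertices. In this configuration, each non-landmark adjacent to exactly one landmark $u$ receives the distinct singleton code $\{u\}$, and the middle non-landmark of any distance-$2$ pair receives the distinct two-landmark code $\{u, u'\}$. Exactly one run of three non-landmarks (contributing the unique vertex with empty code $\mathbf{2}_k$), or a single tail of length two, can be inserted to absorb $n \bmod 5$. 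A short residue-class check confirms all codes are distinct. For $C_n$ the same pattern is used cyclically, replacing the two tails by one additional external gap.

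For the lower bound, I would fix any adjacency resolving set $S \subseteq V(P_n)$ with $|S|=k$ and exploit that $N(v) \cap S$ for $v \notin S$ lies in $\{\emptyset\} \cup \{\{u\} : u \in S\} \cup \{\{u,u'\} : u,u' \in S, \ d(u,u')=2\}$. Injectivity of $a_S$ on $V \setminus S$ then imposes: at most one non-landmark has $N(v) \cap S = \emptyset$, and at most one non-landmark has $N(v) \cap S = \{u\}$ for each $u \in S$. Translating to positions: every internal landmark must have another landmark at distance $\le 2$ (otherwise both of its path-neighbours carry the code $\{u\}$); every gap between consecutive landmarks has graph-distance at most $4$; and the number of non-landmarks with empty code (one per distance-$4$ gap and one per length-$2$ tail) is at most $1$. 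Grouping the landmarks into maximal ``blocks'' in which consecutive distances are $\le 2$ and writing $m$ for the number of blocks, a block of size $b_i$ contributes at most $b_i - 1$ internal non-landmarks, while the external non-landmarks in all tails and between-block gaps sum to at most $2m + 1$. Combined with $m \le \lfloor (k+2)/2 \rfloor$ (a size-$1$ block can occur only at a path endpoint), these inequalities yield $n \le 2k + m + 1 \le \lfloor (5k+2)/2 \rfloor$, equivalently $k \ge \lfloor (2n+2)/5 \rfloor$. The cycle version is analogous with tail-terms replaced by one further external gap.

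The main obstacle is the boundary book-keeping. The single-vertex empty-code budget, together with the constraints on singleton codes, interacts non-trivially with the lengths of the two tails and the sizes of the boundary blocks, and matching the floor $\lfloor (2n+2)/5 \rfloor$ exactly (rather than a slightly weaker bound of order $2n/5$) requires a small case split on $n \bmod 5$ in both directions so that the algebra closes without slack.
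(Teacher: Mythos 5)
The paper offers no proof of this proposition --- it is imported verbatim from Jannesari and Omoomi \cite{adjacency_dim} --- so there is nothing internal to compare your argument against; I can only assess it on its own terms. Your framework is the right one: for $v\notin S$ the adjacency code is exactly $N(v)\cap S$, so $S$ resolves iff this map is injective off $S$, and on a path the only available codes are $\emptyset$, singletons, and distance-$2$ doubletons. The upper-bound construction (landmark pairs at distance $2$, separated by two non-landmarks, e.g.\ $S=\{v_2,v_4,v_7,v_9,\dots\}$) does realize $\lfloor\frac{2n+2}{5}\rfloor$, and your structural constraints for the lower bound (at most one empty code globally, at most one code $\{u\}$ per landmark $u$, hence every gap has length at most $4$ and every size-$1$ block sits at a path endpoint) are all correct.

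The one place the argument does not yet close is the final chain $n\le 2k+m+1\le\lfloor\frac{5k+2}{2}\rfloor$: substituting $m\le\lfloor\frac{k+2}{2}\rfloor$ into $2k+m+1$ gives $\lfloor\frac{5k+4}{2}\rfloor$, which is one too large in both parities (for $k=2$ it would permit $\adim(P_7)=2$, which is false). You flag this yourself, and your diagnosis is accurate, but the repair is the actual crux and should be written out: the two bounds cannot be tight simultaneously. If $m$ attains $\lfloor\frac{k+2}{2}\rfloor$, the extremal block structure forces the size-$1$ blocks to be anchored at the path endpoints, so the corresponding tails have length $0$ and the external count drops from $2m+1$ to $2m$ (or $2m-1$ when both ends are singletons); if $m$ is one below the maximum, the crude bound $2k+m+1$ already suffices. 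Carrying out this two-case check (and the analogous, slightly cleaner cyclic version where there are no tails and $m\le\lfloor k/2\rfloor$) completes the lower bound, so the plan is sound but that last step is genuinely load-bearing rather than routine.
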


\begin{theorem}
For $n \geq 4$, $\bdim(P_n) = \bdim(C_n) = \lfloor \frac{2n+2}{5}\rfloor$
\end{theorem}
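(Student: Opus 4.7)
The upper bound $\bdim(P_n), \bdim(C_n) \leq \lfloor \frac{2n+2}{5} \rfloor$ is immediate from Observation~\ref{obs_bdim}(a), which states $\bdim(G) \leq \adim(G)$ for every graph $G$, combined with Proposition~\ref{adim_pc}, which gives $\adim(P_n) = \adim(C_n) = \lfloor \frac{2n+2}{5} \rfloor$ for $n \geq 4$. For the matching lower bound, the plan is to analyze the gap structure of a resolving broadcast. Fix a resolving broadcast $f$ on $P_n$ with support $S = \{s_1 < \cdots < s_k\}$, strengths $i_1, \ldots, i_k$, and cost $c_f = \sum_\ell i_\ell$; let $g_\ell$ denote the number of non-support vertices in the $\ell$-th gap, with $g_0$ and $g_k$ being end-gaps.

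Two local constraints emerge from the resolving condition. First (\emph{gap constraint}), for a middle gap and a non-support offset $r \in \{1, \ldots, g_\ell\}$, the two adjacent transmitters contribute broadcast-code entries $\min(r, i_\ell + 1)$ and $\min(g_\ell + 1 - r, i_{\ell+1} + 1)$; vertices with $r \in [i_\ell + 1,\, g_\ell - i_{\ell+1}]$ all share the same entry pair $(i_\ell + 1, i_{\ell+1} + 1)$, so forcing all broadcast codes to be globally distinct gives $g_\ell \leq i_\ell + i_{\ell+1} + 1$ together with an at-most-one condition on ``globally far'' vertices summed over all gaps. Second (\emph{symmetry constraint}), for each interior support $s_\ell$ the pair $(v_{s_\ell - 1}, v_{s_\ell + 1})$ receives equal entry from $s_\ell$ itself, so a neighboring transmitter must break the tie, forcing $g_{\ell - 1} \leq i_{\ell - 1}$ or $g_\ell \leq i_{\ell + 1}$ (with a boundary-modified version at the end supports $s_1, s_k$, in which only the interior neighbor can distinguish the pair).

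Using these constraints I would show, via an extremal analysis over $(k, i_1, \ldots, i_k, g_0, \ldots, g_k)$, that the maximum of $n = k + \sum_\ell g_\ell$ for fixed $c_f$ is attained when all $i_\ell = 1$; this recovers precisely the extremal adjacency configuration and yields $c_f \geq \adim(P_n) = \lfloor \frac{2n+2}{5} \rfloor$. The main obstacle is the rigorous proof that no mixed-strength configuration outperforms the all-strength-1 case: raising a single $i_\ell$ from $1$ to some $i \geq 2$ relaxes the gap constraint at the two adjacent gaps by $i - 1$ each, but the symmetry constraint now requires those enlarged gaps to be bounded by the strengths of the next-further-out supports rather than by $1$, so a direct computation confirms that the cost-to-vertex ratio strictly increases. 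The argument for $C_n$ is essentially identical, with the gap sequence treated cyclically and no end-gap contributions, and the extremal configuration again coincides with the cyclic adjacency pattern.
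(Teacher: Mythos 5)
Your upper bound is exactly the paper's. For the lower bound you take a genuinely different route from the paper (which performs a local surgery: any transmitter of strength $x\ge 2$ at $v_j$ is replaced by strength-$1$ transmitters at its neighbors, or by strength $x-2$ at $v_j$ plus strength-$1$ transmitters at $v_{j\pm(x-1)}$, preserving total cost and iterating until the broadcast is $\{0,1\}$-valued, whence $\bdim=\adim$). Unfortunately your route has a genuine gap: the \emph{gap constraint} $g_\ell \le i_\ell+i_{\ell+1}+1$ is not a valid necessary condition for a resolving broadcast, because vertices in the saturated middle of gap $\ell$ can be resolved by a \emph{non-adjacent} transmitter of large strength. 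Concretely, on $P_{13}$ take $f(v_0)=20$, $f(v_2)=f(v_{10})=1$ and $f=0$ elsewhere; this is a resolving broadcast ($v_0$ alone separates all vertices since $d_{20}(v_j,v_0)=j$), yet the middle gap between $v_2$ and $v_{10}$ has $g=7>1+1+1$. The same objection applies to your \emph{symmetry constraint}: the pair $(v_{s_\ell-1},v_{s_\ell+1})$ can be separated by $s_{\ell-2}$ or $s_{\ell+2}$ if those strengths are large, so the disjunction $g_{\ell-1}\le i_{\ell-1}$ or $g_\ell\le i_{\ell+1}$ need not hold. Since your plan is to maximize $n$ subject to these constraints, using conditions that are not actually necessary means some legitimate resolving broadcasts fall outside your feasible region, and the resulting bound on $n$ is not justified.

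Beyond that, the crux of your argument --- the claim that the extremal analysis over $(k,i_1,\dots,i_k,g_0,\dots,g_k)$ is optimized at all $i_\ell=1$, and in particular that ``a direct computation confirms that the cost-to-vertex ratio strictly increases'' when some $i_\ell$ is raised --- is asserted rather than carried out, and it is precisely where the difficulty lies once the constraints are corrected to account for all transmitters rather than only the two adjacent ones. To repair the argument you would either need to (i) restate both constraints with the correct quantification (for each unresolved pair, \emph{some} transmitter $s_j$, at whatever distance, must have $i_j+1$ exceeding the relevant distance) and then redo the optimization, which becomes a substantially more involved global argument, or (ii) switch to a local exchange argument in the spirit of the paper, showing that any transmitter of strength $\ge 2$ can be replaced by cheaper or equally expensive strength-$1$ transmitters without destroying the resolving property.
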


\begin{proof}
Let $G$ be $P_n$ or $C_n$, with vertices $v_0, \dots, v_{n-1}$ in order, where $n\ge 4$. By Observation~\ref{obs_bdim}(a) and Proposition~\ref{adim_pc}, $\bdim(G) \le \lfloor \frac{2n+2}{5}\rfloor$ for $n \geq 4$. Thus it suffices to prove that $\sum_{v \in V(G)} f(v)$ is minimized when $f(v) \leq 1$ for all $v \in V(G)$.

Suppose that $f$ is a resolving broadcast that achieves $\bdim(G)$. If $f(v) \leq 1$ for all $v \in V(G)$, then we are done. Otherwise, we modify $f$ to obtain a new resolving broadcast $f'$ for which $\sum_{v \in V(G)} f'(v) \leq \sum_{v \in V(G)} f(v)$ and $f'(v) \leq 1$ for all $v \in V(G)$. 

Start by defining $f_0$ such that $f_0(v) = f(v)$ for all $v \in V(G)$. Given $f_i$, let $v_j$ be any vertex in $V(G)$ such that $f_i(v_j) > 1$. If $f_i(v_j) = 2$, then we define $f_{i+1}(v_{(j-1)\mod n}) = f_{i+1}(v_{(j+1)\mod n}) = 1$ and $f_{i+1}(v_j) = 0$, unless $v_j$ is an end vertex of $P_n$. If $G = P_n$ and $f_i(v_0) = 2$, then we define $f_{i+1}(v_0) = 1$ and $f_{i+1}(v_{1}) = 1$. If $G = P_n$ and $f_i(v_{n-1}) = 2$, then we define $f_{i+1}(v_{n-1}) = 1$ and $f_{i+1}(v_{n-2}) = 1$. 

Otherwise if $f_i(v_j) = x > 2$, then we define $f_{i+1}(v_j) = x-2$ and $f_{i+1}(v_{(j-x+1)\mod n}) = f_{i+1}(v_{(j+x-1)\mod n}) = 1$. If any vertices are assigned multiple values for $f_{i+1}$, only the maximum value is used. If any vertex $v$ is assigned no values for $f_{i+1}$, then $f_{i+1}(v) = f_i(v)$.

The process will end in finitely many steps, so suppose that $k$ is an integer such that $f_k(v) \leq 1$ for all $v \in V(G)$. Then we let $f' = f_k$, and $\sum_{v \in V(G)} f'(v) \leq \sum_{v \in V(G)} f(v)$ by construction. Thus $\bdim(P_n) = \adim(P_n) = \bdim(C_n) = \adim(C_n) = \lfloor \frac{2n+2}{5}\rfloor$.~\hfill
\end{proof}

\section{Extremal bounds and characterization}\label{s:extremal}
In this section, we prove that $\bdim(G) = \Omega(\log{n})$ for all graphs $G$ of order $n$, and that the result is sharp up to a constant factor. We also obtain bounds for the clique number and maximum degree of graphs with adjacency dimension $k$ or broadcast dimension $k$. Furthermore, we characterize the family of graphs of adjacency dimension $k$. First, we recall some known bounds for the metric dimension of graphs.

\begin{theorem}\emph{\cite{tree1}}\label{dim_bounds}
For a connected graph $G$ of order $n \ge 2$ and diameter $d$, 
$$f(n,d) \le \dim(G) \le n-d,$$
where $f(n,d)$ is the least positive integer $k$ for which $k+d^k \ge n$.
\end{theorem}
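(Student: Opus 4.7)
The plan is to prove the two bounds separately by fairly direct arguments.

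For the upper bound $\dim(G) \le n-d$, I would fix a diametral path $v_0, v_1, \ldots, v_d$ realizing $d = \diam(G)$, and take
$$S = V(G) \setminus \{v_1, v_2, \ldots, v_d\},$$
which has cardinality $n-d$ and contains $v_0$. I claim $S$ resolves $G$. First, since the path is geodesic, no shortcut from $v_0$ to any $v_i$ can exist (otherwise concatenating with $v_i, v_{i+1}, \ldots, v_d$ would give a $v_0$-$v_d$ walk of length less than $d$, contradicting $d = \diam(G)$), so $d(v_0, v_i) = i$ for $0 \le i \le d$. Hence $v_0 \in S$ distinguishes every pair from $\{v_1, \ldots, v_d\}$. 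Any remaining pair involves at least one vertex $w \in S$, and then $w$ itself resolves the pair because $d(w,w) = 0$ while the other vertex is at positive distance from $w$. Therefore $\dim(G) \le |S| = n-d$.

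For the lower bound $f(n,d) \le \dim(G)$, I would take any resolving set $S = \{s_1, \ldots, s_k\}$ of minimum size $k = \dim(G)$ and look at the code map $r_S \colon V(G) \to \{0,1,\ldots,d\}^k$, which is injective by definition of a resolving set. The key observation is that for a vertex $v \notin S$, every coordinate of $r_S(v)$ is nonzero, since $d(v, s_i) = 0$ forces $v = s_i \in S$. Thus $r_S$ maps $V(G) \setminus S$ injectively into $\{1, 2, \ldots, d\}^k$, a set of size $d^k$. Counting gives
$$n = |S| + |V(G) \setminus S| \le k + d^k.$$
Since $f(n,d)$ is by definition the least positive integer $k$ satisfying $k + d^k \ge n$, this forces $\dim(G) = k \ge f(n,d)$.

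Neither half presents a real obstacle; the only subtlety is the geodesic fact $d(v_0, v_i) = i$ in the upper-bound argument, which follows immediately from the diametral choice of the path. The lower bound hinges entirely on the observation that only the vertices of $S$ can contribute a $0$ to a code, which is what separates this bound from the weaker estimate $n \le (d+1)^k$.
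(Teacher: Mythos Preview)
Your proof is correct. Note, however, that the paper does not actually prove this theorem: it is stated as a known result with a citation to \cite{tree1} (Chartrand, Eroh, Johnson, Oellermann), and the paper immediately moves on to quote the sharper bound of Hernando et al. Your argument is essentially the standard one from the original source: for the upper bound, remove all but one vertex of a diametral path and observe that the retained endpoint distinguishes the removed vertices by their position along the geodesic; for the lower bound, count codes by noting that non-$S$ vertices have codes in $\{1,\dots,d\}^k$. Both halves are clean and there is nothing to add.
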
 

Hernando et al. \cite{Hernando} improved the bound in Theorem~\ref{dim_bounds}.

\begin{theorem}\emph{\cite{Hernando}}\label{dim_bound2}
Let $G$ be a connected graph of order $n$, diameter $d $, and $\dim(G)=k$. Then
$$n \le \left(\left\lfloor\frac{2d}{3}\right\rfloor+1\right)^k+k\sum_{i=1}^{\lceil\frac{d}{3}\rceil}(2i-1)^{k-1}.$$ 
\end{theorem}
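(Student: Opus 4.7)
The plan is to fix a metric basis $S = \{s_1, s_2, \ldots, s_k\}$ of $G$ and use the injectivity of the metric code map $v \mapsto r_S(v) = (d(v,s_1), \ldots, d(v,s_k))$ to bound $n$ by counting distinct codes. I will split the count according to the quantity $m(v) = \min_{1 \le j \le k} d(v, s_j)$, with the cutoff placed at $\lceil d/3 \rceil$, since that is precisely the threshold appearing in the stated summation.

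For the first case, every vertex $v$ with $m(v) \ge \lceil d/3 \rceil$ has all coordinates of $r_S(v)$ in the set $\{\lceil d/3 \rceil, \lceil d/3 \rceil + 1, \ldots, d\}$. A short check, separating $d$ by its residue modulo $3$, shows $d - \lceil d/3 \rceil + 1 = \lfloor 2d/3 \rfloor + 1$ in each case, so the number of such codes is at most $(\lfloor 2d/3 \rfloor + 1)^{k}$, matching the first term of the bound.

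For the second case, for each vertex $v$ with $m(v) \le \lceil d/3 \rceil - 1$, pick an index $j$ achieving the minimum and set $i = m(v) + 1 \in \{1, 2, \ldots, \lceil d/3 \rceil\}$, so that $d(v, s_j) = i - 1$. The key estimate is the triangle inequality: for each $l \neq j$, $|d(v, s_l) - d(s_j, s_l)| \le d(v, s_j) = i - 1$, which confines $d(v, s_l)$ to an integer interval of length at most $2i - 1$. Thus the number of possible codes associated with a given pair $(j, i)$ is at most $(2i-1)^{k-1}$, and summing over the $k$ choices of $j$ and over $i = 1, \ldots, \lceil d/3 \rceil$ yields the second term $k\sum_{i=1}^{\lceil d/3 \rceil} (2i-1)^{k-1}$. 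Adding the contributions from the two cases produces the required inequality.

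The main obstacle is identifying the correct parameter for the partition: using the minimum coordinate $m(v)$ with the symmetric threshold $\lceil d/3 \rceil$ is exactly what makes the two case bounds dovetail. Partitioning instead by the maximum coordinate, or cutting off at a different fraction of $d$, would loosen the triangle-inequality estimate in the second case, since the window for $d(v, s_l)$ would no longer collapse to length $2i-1$. Once this choice is made, the rest of the argument is a clean counting of code vectors using only the triangle inequality and the bound $d(v, s_j) \le d$.
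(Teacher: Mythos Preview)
The paper does not supply its own proof of this theorem; it is quoted from \cite{Hernando} without argument and used only as a black box to deduce Corollary~\ref{cor_bound1}. There is therefore nothing in the present paper to compare your proposal against.

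That said, your argument is correct. Partitioning the vertices according to whether $m(v)=\min_j d(v,s_j)$ is at least $\lceil d/3\rceil$ or strictly smaller, and then in the second case anchoring at a nearest basis vertex $s_j$ and invoking the triangle inequality $|d(v,s_l)-d(s_j,s_l)|\le i-1$ to trap each remaining coordinate in a window of $2i-1$ integers, reproduces exactly the two displayed terms. The arithmetic check $d-\lceil d/3\rceil+1=\lfloor 2d/3\rfloor+1$ is right in all three residue classes, and the assignment of each Case~2 vertex to a single pair $(j,i)$ avoids double counting. This is essentially the argument given in \cite{Hernando}.
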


As a corollary of Observation~\ref{obs_bdim}(a) and Theorem~\ref{dim_bound2}, we obtain bounds on the maximum order of any graph $G$ with $\diam(G)=d$ and $\bdim(G) = k$.

\begin{corollary}\label{cor_bound1}
For any graph $G$ with $\diam(G) = d$ and $\bdim(G) = k$, 
$$|V(G)| \leq \left(\left\lfloor \frac{2d}{3}\right\rfloor +1\right)^{k}+k \sum_{i = 1}^{\lceil \frac{d}{3}\rceil} (2i-1)^{k-1}.$$
\end{corollary}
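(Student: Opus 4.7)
The plan is to derive the result directly from Observation~\ref{obs_bdim}(a) and Theorem~\ref{dim_bound2} via a simple monotonicity argument. Since $\diam(G) = d$ is finite, $G$ is connected (distances between vertices of different components being infinite by the convention in the introduction). Writing $k' = \dim(G)$, Observation~\ref{obs_bdim}(a) gives $k' \le \bdim(G) = k$, and Theorem~\ref{dim_bound2} applied to $G$ yields
$$|V(G)| \le \left(\left\lfloor\frac{2d}{3}\right\rfloor+1\right)^{k'} + k'\sum_{i=1}^{\lceil\frac{d}{3}\rceil}(2i-1)^{k'-1}.$$

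It then remains to verify that the right-hand side, viewed as a function of the exponent with $d$ fixed, is non-decreasing, so that substituting $k' \le k$ preserves the direction of the inequality. The first term $(\lfloor 2d/3\rfloor+1)^{k'}$ is non-decreasing in $k'$ since its base is at least $1$. For the second term, each summand has the form $k'(2i-1)^{k'-1}$; when $i=1$ this equals $k'$, and when $i \ge 2$ it is a product of two non-decreasing non-negative factors (the factor $(2i-1)^{k'-1}$ is non-decreasing in $k'$ because $2i-1 \ge 3$). Hence the whole expression is non-decreasing in $k'$, and plugging in $k' \le k$ gives the stated bound.

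I do not anticipate a real obstacle here: the corollary is essentially the observation that Theorem~\ref{dim_bound2}'s upper bound only becomes weaker when $\dim(G)$ is replaced by the (potentially larger) quantity $\bdim(G)$. The only care needed is the elementary monotonicity check above, which is routine.
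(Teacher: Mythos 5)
Your proof is correct and follows essentially the same route as the paper: apply Observation~\ref{obs_bdim}(a) to get $\dim(G)\le k$ and then invoke Theorem~\ref{dim_bound2}. The only difference is that you explicitly verify the monotonicity of the bound in the exponent (and note that finite diameter forces connectedness), steps the paper leaves implicit; both checks are correct.
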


\begin{proof}
If $G$ has $\bdim(G) = k$, then $\dim(G) \leq k$ by Observation~\ref{obs_bdim}(a). So, the desired result follows from Theorem~\ref{dim_bound2}.~\hfill
\end{proof}

We also obtain bounds on the maximum order of any subgraph of $G$ with $\diam(G) = d$ and $\bdim(G) = k$.

\begin{theorem}\emph{\cite{mdapa}}\label{dim_subgraph}
For any graph $G$ with $\dim(G)=k$ and any subgraph $H$ of $G$ with $\diam(H)=d$, $|V(H)|\le (d+1)^k$. 
\end{theorem}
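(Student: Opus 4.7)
The plan is to bound $|V(H)|$ by counting the possible metric codes that vertices of $H$ can receive relative to a minimum resolving set of $G$. I would start by fixing a resolving set $S = \{u_1, u_2, \ldots, u_k\}$ of $G$. Since $S$ resolves $G$, the map $v \mapsto r_S(v)$ is injective on $V(G)$, and in particular on $V(H) \subseteq V(G)$, so $|V(H)|$ is at most the number of distinct metric codes realized by vertices of $H$.

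The key step is to argue that for each $u_i \in S$, the set of distances $\{d(v,u_i) : v \in V(H)\}$ is contained in an interval of $d+1$ consecutive nonnegative integers. For this, I would use that $H$ is a subgraph of $G$, so every $x$-$y$ path in $H$ is also a walk in $G$, giving $d(x,y) \le d_H(x,y) \le d$ for all $x, y \in V(H)$, where $d_H$ denotes distance in $H$. The triangle inequality in $G$ then yields
\[ |d(x,u_i) - d(y,u_i)| \le d(x,y) \le d \]
for any $x, y \in V(H)$, which forces the $i$-th coordinate of $r_S(v)$ to take at most $d+1$ distinct values as $v$ varies over $V(H)$.

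Multiplying across the $k$ independent coordinates, at most $(d+1)^k$ distinct codes can be realized by vertices of $H$, so injectivity of $r_S$ on $V(H)$ gives $|V(H)| \le (d+1)^k$. The only non-routine step is the triangle inequality observation bounding the spread of a single coordinate; everything else is immediate from the definition of a resolving set and a product bound over the $k$ coordinates. The degenerate case in which $H$ is disconnected (so $d=\infty$) is vacuous under the paper's convention that $d(x,y)=\infty$ for vertices in distinct components.
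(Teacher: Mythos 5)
Your proof is correct and is essentially the argument given for this result in the cited source: fix a resolving set $S$ of size $k$, observe that distances from $V(H)$ to any fixed $u_i\in S$ span at most $d+1$ consecutive values because $d_G(x,y)\le d_H(x,y)\le d$ for $x,y\in V(H)$, and conclude via injectivity of $r_S$ that $|V(H)|\le (d+1)^k$. No gaps; the handling of the disconnected/degenerate case is also fine.
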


\begin{corollary}\label{cor_bound2}
For any graph $G$ with $\bdim(G) = k$ and any subgraph $H$ of $G$ with $\diam(H) = d$, $|V(H)| \leq (d+1)^k$.
\end{corollary}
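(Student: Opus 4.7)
The plan is to mimic the proof of Corollary~\ref{cor_bound1} almost verbatim, since Corollary~\ref{cor_bound2} is an immediate consequence of Observation~\ref{obs_bdim}(a) together with Theorem~\ref{dim_subgraph}. First I would invoke Observation~\ref{obs_bdim}(a) to conclude that $\dim(G) \le \bdim(G) = k$. Let $k' = \dim(G)$, so $k' \le k$.

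Next I would apply Theorem~\ref{dim_subgraph} to the graph $G$ (with metric dimension $k'$) and its subgraph $H$ (with diameter $d$), obtaining $|V(H)| \le (d+1)^{k'}$. Since $d \ge 0$ we have $d+1 \ge 1$, so the function $x \mapsto (d+1)^{x}$ is non-decreasing in $x$, and hence $(d+1)^{k'} \le (d+1)^{k}$. Combining these two inequalities yields $|V(H)| \le (d+1)^{k}$, as desired.

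There is no real obstacle here: the only mild subtlety is verifying the monotonicity step $(d+1)^{k'} \le (d+1)^{k}$, which is automatic as long as $d \ge 0$ (and if $H$ has at least two vertices then $d \ge 1$, making the bound strictly meaningful). The whole argument can be written in two lines, parallel to the proof of Corollary~\ref{cor_bound1}.
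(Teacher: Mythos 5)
Your proposal is correct and follows exactly the paper's argument: apply Observation~\ref{obs_bdim}(a) to get $\dim(G)\le k$ and then invoke Theorem~\ref{dim_subgraph}. The only addition is your explicit check that $(d+1)^{k'}\le(d+1)^k$, a harmless elaboration the paper leaves implicit.
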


\begin{proof}
If $G$ has $\bdim(G) = k$, then $\dim(G) \leq k$ by Observation~\ref{obs_bdim}(a). So, the desired result follows from Theorem~\ref{dim_subgraph}.~\hfill
\end{proof}

\begin{remark}
By Observation~\ref{obs_bdim}(a), Corollaries~\ref{cor_bound1} and~\ref{cor_bound2} hold when $\bdim(G)=k$ is replaced by $\adim(G)=k$.
\end{remark}

The next result shows that Corollary~\ref{cor_bound1} is sharp for $d = 2$. This result uses a family of graphs from \cite{zubrilina, mdapa}.

\begin{theorem}\label{O_sharp}
There exist graphs $G$ of order $n$ with $\bdim(G) = O(\log n)$.
\end{theorem}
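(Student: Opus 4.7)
The strategy I would follow is to exhibit an explicit family of graphs $\{G_k\}_{k\ge 1}$ with $\Theta(2^k)$ vertices and $\bdim(G_k) \le k$, from which $\bdim(G_k) = O(\log |V(G_k)|)$ is immediate. Since $\bdim(G) \le \adim(G)$ by Observation~\ref{obs_bdim}(a), it suffices to produce such a family whose \emph{adjacency} dimension is at most $k$; the broadcast bound then follows for free by assigning weight $1$ to each landmark.

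The construction I have in mind is the ``subset graph'' that is implicit in the cited references. For each $k \ge 1$, take $k$ pairwise non-adjacent vertices $u_1,\ldots,u_k$, and for every nonempty $T \subseteq \{1,\ldots,k\}$ introduce one new vertex $v_T$ joined to exactly $\{u_i : i\in T\}$. Then $G_k$ has $n_k = k + 2^k - 1$ vertices, so $k = \Theta(\log n_k)$.

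The heart of the proof is to check that $S = \{u_1,\ldots,u_k\}$ is an adjacency resolving set. I would just compute the adjacency codes $a_S(\cdot)$ directly. Since the landmarks form an independent set, $a_S(u_j)$ has a $0$ in coordinate $j$ and $2$'s in every other coordinate. By construction, $a_S(v_T) \in \{1,2\}^k$ has a $1$ in coordinate $i$ iff $i \in T$, and a $2$ otherwise. Distinct landmarks are separated by the position of the unique $0$; distinct subset vertices are separated by the set of coordinates equal to $1$; and a landmark is distinguished from any subset vertex by the presence of a $0$. Hence $\adim(G_k) \le k$, so $\bdim(G_k) \le k = O(\log n_k)$.

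I do not expect a serious obstacle. The only place where one might slip is in comparing a singleton subset vertex $v_{\{i\}}$ (whose code is $1$ in coordinate $i$ and $2$ elsewhere) with the landmark $u_i$ (whose code is $0$ in coordinate $i$ and $2$ elsewhere); these differ in the $0$--$1$ distinction at coordinate $i$, so no accidental collision occurs. With that verification in hand, the chain $\bdim(G_k) \le \adim(G_k) \le k$ together with $n_k = k + 2^k - 1$ yields the claim.
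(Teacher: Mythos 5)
Your proposal is correct and is essentially the paper's own construction: a graph on roughly $k+2^k$ vertices in which $k$ weight-one landmarks resolve all other vertices through their adjacency patterns (the paper places both vertex classes in cliques and indexes by all binary strings of length $k$, but this is immaterial to the codes). The only point the paper handles that you omit is covering every order $n$ (not just $n$ of the form $k+2^k$) by deleting some of the subset vertices from the next larger instance, which carries over verbatim to your variant.
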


\begin{proof}
We construct a graph $G$ of order $n = k+2^k$ by starting with $k$ vertices $v_1, \dots, v_k$ in a clique, and adding $2^k$ new vertices $\left\{u_b\right\}_{b \in \left\{0,1\right\}^k}$ also in a clique labeled with binary strings of length $k$ such that $u_b$ has an edge with $v_j$ if and only if the $j^{th}$ digit of $b$ is $1$. 

Define the resolving broadcast $f$ such that $f(v_i) = 1$ for all $1 \leq i \leq k$ and $f(u_b) = 0$ for all $b \in \left\{0,1\right\}^k$. Since $n = k+2^k$ and $\sum_{v \in V(G)} f(v) = k$, we have $\bdim(G) = O(\log n)$. For any $n$ not of the form $k+2^k$, we can define $n'$ to be the least number greater than $n$ that is of the form $k+2^k$, construct $G'$ with $n'$ vertices as described, and delete any number of vertices $u_b$ from $G'$ until the remaining graph $G$ has $n$ vertices.~\hfill
\end{proof}

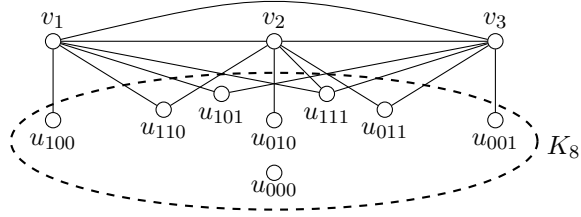
\begin{figure}[ht]
\centering
\begin{tikzpicture}[scale=.7, transform shape]

\node [draw, shape=circle, scale=.8] (a) at  (0, 0) {};
\node [draw, shape=circle, scale=.8] (b) at  (4.2, 0) {};
\node [draw, shape=circle, scale=.8] (c) at  (8.4, 0) {};
\node [draw, shape=circle, scale=.8] (1) at  (0, -1.5) {};
\node [draw, shape=circle, scale=.8] (2) at  (4.2, -1.5) {};
\node [draw, shape=circle, scale=.8] (3) at  (8.4, -1.5) {};
\node [draw, shape=circle, scale=.8] (4) at  (2.1, -1.3) {};
\node [draw, shape=circle, scale=.8] (5) at  (6.3, -1.3) {};

\node [draw, shape=circle, scale=.8] (6) at  (3.2, -1) {};
\node [draw, shape=circle, scale=.8] (7) at  (5.2, -1) {};
\node [draw, shape=circle, scale=.8] (8) at  (4.2, -2.5) {};

\node [scale=1.3] at (0,0.4) {$v_1$};
\node [scale=1.3] at (4.2,0.4) {$v_2$};
\node [scale=1.3] at (8.4,0.4) {$v_3$};

\node [scale=1.3] at (0, -1.9) {$u_{100}$};
\node [scale=1.3] at (4.2,-1.9) {$u_{010}$};
\node [scale=1.3] at (8.4,-1.9) {$u_{001}$};

\node [scale=1.3] at (2.1, -1.7) {$u_{110}$};
\node [scale=1.3] at (6.3, -1.7) {$u_{011}$};

\node [scale=1.3] at (3.2,-1.4) {$u_{101}$};
\node [scale=1.3] at (5.2,-1.4) {$u_{111}$};
\node [scale=1.3] at (4.2,-2.8) {$u_{000}$};
\node [scale=1.3] at (9.7,-2) {$K_8$};

\draw(a)--(b)--(c);\draw(a)--(1);\draw(b)--(2);\draw(c)--(3);\draw(a)--(4)--(b);\draw(b)--(5)--(c);\draw(a)--(6)--(c);\draw(a)--(7)--(c);\draw(b)--(7);\draw(a).. controls (4.2,1) .. (c);
\draw[thick,dashed] (4.2,-1.9) ellipse (5cm and 1.3cm);

\end{tikzpicture}
\caption{\small A graph $G$ of order $n$ satisfying $\bdim(G) = O(\log n)$; here $k=3$ for $G$ described in the proof of Theorem~\ref{O_sharp}.}\label{fig_bdim_O}
\end{figure}

Based on the proof of Theorem~\ref{O_sharp}, we have the following corollary.

\begin{corollary} \label{adim_olgn}
There exist graphs $G$ of order $n$ with $\adim(G)=O(\log n)$.
\end{corollary}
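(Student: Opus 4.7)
The plan is to reuse the same graph construction $G$ from the proof of Theorem~\ref{O_sharp} and show that the broadcast $f$ defined there actually certifies an upper bound on $\adim(G)$ as well. The key observation is that $f$ takes only the values $0$ and $1$, so its support $S = \{v_1, \ldots, v_k\}$ can be interpreted directly as a candidate adjacency resolving set, with $|S| = c_f(G) = k$.

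First I would recall that when $f(z) = 1$, the condition $d_{f(z)}(x,z) \neq d_{f(z)}(y,z)$ is exactly the condition $d_1(x,z) \neq d_1(y,z)$, which is precisely what is required for $z$ to distinguish $x$ and $y$ in the adjacency sense. Thus any resolving broadcast whose positive values are all equal to $1$ is (in terms of its support) an adjacency resolving set of the same cost. In particular, it suffices to verify that $S = \{v_1, \ldots, v_k\}$ is an adjacency resolving set of the graph $G$ constructed in Theorem~\ref{O_sharp}.

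Next I would compute the adjacency codes $a_S(v)$ explicitly: for each $v_i$, the code has a $0$ in position $i$ and $1$s elsewhere (because $v_1,\ldots,v_k$ form a clique); for each $u_b$ with $b \in \{0,1\}^k$, the $j$-th entry of $a_S(u_b)$ equals $1$ if the $j$-th bit of $b$ is $1$, and $2$ otherwise (since $u_b$ is adjacent to $v_j$ exactly when the $j$-th bit of $b$ is $1$, and the distance is at most $2$ in either case). Codes containing a $0$ occur only for the $v_i$'s and are pairwise distinct, while the codes of the $u_b$'s consist of $1$s and $2$s and are determined bijectively by $b$, so they too are pairwise distinct and differ from every $v_i$-code. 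Hence $S$ is an adjacency resolving set, giving $\adim(G) \le k$. Since $n = k + 2^k$, this yields $\adim(G) = O(\log n)$, and the extension to arbitrary $n$ follows by the same vertex-deletion argument as in Theorem~\ref{O_sharp}.

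There is no real obstacle here: the entire content is the observation that the broadcast used in Theorem~\ref{O_sharp} has codomain $\{0,1\}$, so the upper bound it provides transfers verbatim to $\adim(G)$ via the reduction noted in the introduction. The only care needed is the explicit verification that the adjacency codes of the $2^k + k$ vertices are all distinct, which I would present compactly by separating the cases based on whether a code contains a $0$.
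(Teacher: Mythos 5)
Your proposal is correct and follows exactly the route the paper intends: the corollary is stated as an immediate consequence of the proof of Theorem~\ref{O_sharp}, since the resolving broadcast there has codomain $\{0,1\}$ and its support $\{v_1,\dots,v_k\}$ is an adjacency resolving set. Your explicit verification of the distinctness of the adjacency codes is a reasonable fleshing-out of what the paper leaves implicit.
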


The construction in Theorem \ref{O_sharp} can also be used to recursively characterize the graphs $G$ with $\adim(G) = k$. Given any graph $G_1$ on $k$ vertices $v_1, \dots, v_k$ and $G_2$ on $2^k$ vertices $\left\{u_b\right\}_{b \in \left\{0,1\right\}^k}$, define the graph $B(G_1, G_2)$ to be obtained by connecting $v_i$ and $u_b$ if and only if the $i^{th}$ digit of $b$ is $1$. Moreover, define $\mathcal{B}(G_1, G_2)$ to be the family of induced subgraphs of $B(G_1, G_2)$ that contain every vertex in $G_1$. Finally, define $\mathcal{H}_0 = \emptyset$ and for each $k > 0$ define $\mathcal{H}_k$ to be the family of graphs obtained from taking the union of $\mathcal{B}(G_1, G_2)$ over all graphs $G_1$ with $j$ vertices $v_1, \dots, v_j$ and $G_2$ with $2^j$ vertices $\left\{u_b\right\}_{b \in \left\{0,1\right\}^j}$, for each $1 \leq j \leq k$.

\begin{theorem}\label{adimch}
For each $k \geq 1$, the set of graphs $G$ with $\adim(G) = k$ is $\mathcal{H}_k - \mathcal{H}_{k-1}$ up to isomorphism.
\end{theorem}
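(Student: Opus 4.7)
The approach is to establish the equivalent statement that $\mathcal{H}_k$, up to isomorphism, is exactly the family $\{G : \adim(G) \le k\}$. Granted this, the characterization $\mathcal{H}_k - \mathcal{H}_{k-1} = \{G : \adim(G) = k\}$ is immediate: a graph with $\adim(G) = k$ must lie in $\mathcal{H}_k$ but not in $\mathcal{H}_{k-1}$, and conversely.

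First I would verify $\mathcal{H}_k \subseteq \{G : \adim(G) \le k\}$. Suppose $G$ is isomorphic to a member of $\mathcal{B}(G_1, G_2)$, where $G_1$ has $j$ vertices $v_1,\ldots,v_j$ and $G_2$ has $2^j$ vertices $\{u_b\}_{b\in\{0,1\}^j}$, with $1 \le j \le k$. I claim $S = V(G_1)$ is an adjacency resolving set. Any pair $\{x,y\}$ with $x\in S$ is resolved by $x$ itself, since $d_1(x,x)=0$ while $d_1(y,x)\ge 1$. For a pair of retained non-$S$ vertices $u_b, u_{b'}$, the labels differ, say in coordinate $i$, so by construction of $B(G_1,G_2)$ exactly one of $u_b, u_{b'}$ is adjacent to $v_i$, yielding $d_1(u_b,v_i)\ne d_1(u_{b'},v_i)$. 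Hence $\adim(G) \le j \le k$.

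Next I would show the reverse inclusion. Given $G$ with $\adim(G) = j \le k$, choose an adjacency resolving set $S=\{v_1,\ldots,v_j\}$ of size $j$ and, for each $u \in V(G)-S$, define the adjacency pattern $b(u)\in\{0,1\}^j$ by $b(u)_i = 1 \iff uv_i \in E(G)$; the map $u\mapsto b(u)$ is injective by the resolving property. Let $G_1=G[S]$, and construct $G_2$ on vertex set $\{u_b : b\in\{0,1\}^j\}$ by setting $u_{b(u)}u_{b(u')}\in E(G_2)$ whenever $uu'\in E(G)$ for $u,u'\in V(G)-S$ and declaring every other potential edge to be absent. Then the identity on $S$ together with $u\mapsto u_{b(u)}$ furnishes an isomorphism from $G$ onto the induced subgraph of $B(G_1,G_2)$ on $V(G_1)\cup\{u_{b(u)} : u\in V(G)-S\}$, so $G\in\mathcal{B}(G_1,G_2)\subseteq \mathcal{H}_j \subseteq \mathcal{H}_k$ up to isomorphism.

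I expect no serious obstacle; the argument is essentially bookkeeping, since adjacency dimension is defined in terms of $\{0,1\}^j$-valued adjacency codes and the construction $B(G_1,G_2)$ is engineered to realize every possible code pattern on the non-$S$ side. The only point requiring care is realizing the induced subgraph $G[V(G)-S]$ inside some $G_2$ on the full label set $\{0,1\}^j$, which is exactly why $\mathcal{B}(G_1,G_2)$ permits deletion of arbitrary $u_b$-vertices, and it also explains why the freedom to range $G_2$ over all graphs on $2^j$ vertices is needed rather than a single canonical choice.
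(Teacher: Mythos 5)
Your proposal is correct and follows essentially the same route as the paper: reduce to showing $\mathcal{H}_k = \{G : \adim(G) \le k\}$, verify that $V(G_1)$ is an adjacency resolving set in any member of $\mathcal{B}(G_1,G_2)$, and conversely encode the non-resolving-set vertices by their adjacency patterns (injective by the resolving property) and extend to a graph on all $2^j$ labels. The only cosmetic difference is that you fix a canonical extension $G_2$ with no extra edges, whereas the paper takes an arbitrary extension; both are valid.
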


\begin{proof}
It suffices to show that the set of graphs $G$ with $\adim(G) \leq k$ is $\mathcal{H}_k$. By construction, every graph in $\mathcal{H}_k$ has $\adim(G) \leq k$, since the vertices $v_1, \dots, v_j$ are an adjacency resolving set. Thus it suffices to show that every graph $G$ with $\adim(G) \leq k$ is in $\mathcal{H}_k$. Fix an arbitrary graph $G$ with $\adim(G) \leq k$. Let $X = \left\{x_1, \dots, x_j\right\}$ be an adjacency resolving set for $G$ with $j \leq k$. Let $G_1$ be the induced subgraph of $G$ restricted to $X$, and let $G_2$ be the induced subgraph of $G$ restricted to $\overline{X}$. Label the vertex $v$ of $G_2$ as $u_b$ with a binary string $b$ so that the $i^{th}$ digit of $b$ is $1$ if and only if there is an edge between $v$ and $x_i$. Note that every vertex gets a unique label, or else $X$ would not be an adjacency resolving set. Let $G'_2$ be any graph on $2^j$ vertices $\left\{u_b\right\}_{b \in \left\{0,1\right\}^j}$ such that $G'_2|_{V(G_2)} = G_2$. Then $G$ is an induced subgraph of $B(G_1, G'_2)$ that contains every vertex in $G_1$, so $G$ is in $\mathcal{H}_k$.~\hfill
\end{proof}

As a corollary, we obtain an upper bound on the maximum order of a graph of adjacency dimension $k$. The graph in Theorem \ref{O_sharp} shows that the bound is sharp.

\begin{corollary}\label{max_order_adimk}
The maximum order of a graph of adjacency dimension $k$ is $k+2^k$. 
\end{corollary}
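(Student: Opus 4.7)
The plan is to derive both the upper bound and the sharpness directly from Theorem~\ref{adimch} and Theorem~\ref{O_sharp}.

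First I would establish the upper bound. Suppose $G$ has $\adim(G) = k$. By Theorem~\ref{adimch}, $G \in \mathcal{H}_k$, so by definition of $\mathcal{H}_k$ there exist graphs $G_1$ on $j$ vertices and $G_2$ on $2^j$ vertices with $1 \leq j \leq k$ such that $G$ is an induced subgraph of $B(G_1, G_2)$ containing every vertex of $G_1$. Therefore $|V(G)| \leq |V(B(G_1,G_2))| = j + 2^j$. Since the function $j \mapsto j + 2^j$ is strictly increasing in $j$, this gives $|V(G)| \leq k + 2^k$.

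Next I would verify sharpness using the construction from the proof of Theorem~\ref{O_sharp}. That construction produces a graph $G$ on $n = k + 2^k$ vertices together with a resolving broadcast $f$ with $f(v_i) = 1$ for $1 \leq i \leq k$ and $f(u_b) = 0$ otherwise; in particular $\{v_1, \dots, v_k\}$ is an adjacency resolving set, so $\adim(G) \leq k$. To conclude $\adim(G) = k$, I would argue by contradiction: if $\adim(G) \leq k-1$, then by the upper bound just established (applied with parameter $k-1$) we would have $|V(G)| \leq (k-1) + 2^{k-1}$, contradicting $|V(G)| = k + 2^k$.

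There is no real obstacle here; the content of the corollary is entirely packaged into the characterization Theorem~\ref{adimch} and the construction of Theorem~\ref{O_sharp}. The only subtlety worth stating cleanly is the monotonicity of $j + 2^j$, which ensures that the worst case among $1 \leq j \leq k$ is indeed $j = k$.
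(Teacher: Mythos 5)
Your proof is correct and follows exactly the route the paper intends: the upper bound is read off from the characterization in Theorem~\ref{adimch} (every graph in $\mathcal{H}_k$ sits inside some $B(G_1,G_2)$ on $j+2^j \leq k+2^k$ vertices), and sharpness comes from the construction in Theorem~\ref{O_sharp}. The paper leaves these steps implicit; your explicit monotonicity remark and the contradiction argument pinning down $\adim(G)=k$ for the extremal graph are exactly the right details to fill in.
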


We also obtain a sharp upper bound on the maximum degree of a graph of adjacency dimension $k$.

\begin{corollary}
The maximum possible degree of any vertex in a graph of adjacency dimension $k$ is $k+2^k-1$.
\end{corollary}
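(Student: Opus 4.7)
The plan is to derive this as a two-line consequence of Corollary~\ref{max_order_adimk} together with the explicit graph from Theorem~\ref{O_sharp}. For the upper bound, suppose $G$ is any graph with $\adim(G) = k$. By Corollary~\ref{max_order_adimk}, $|V(G)| \le k + 2^k$, and hence every vertex $v \in V(G)$ satisfies $\deg(v) \le |V(G)| - 1 \le k + 2^k - 1$. This gives the claimed upper bound with no further work.

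For sharpness, I would take the graph $G$ of order $n = k + 2^k$ described in the proof of Theorem~\ref{O_sharp}: a clique on $v_1, \dots, v_k$, a clique on $\{u_b\}_{b \in \{0,1\}^k}$, with $u_b v_i \in E(G)$ iff the $i$-th digit of $b$ is $1$. Consider the vertex $u_{1^k}$ (the all-ones label). It is adjacent to every $v_i$ (all digits are $1$) and to every other $u_b$ (they lie in the second clique), so $\deg(u_{1^k}) = k + (2^k - 1) = k + 2^k - 1$. To finish, I must verify $\adim(G) = k$. The upper bound $\adim(G) \le k$ holds because $\{v_1, \dots, v_k\}$ is an adjacency resolving set: the adjacency code of $u_b$ with respect to this set recovers the binary string $b$ (entry $j$ is $1$ or $2$ according to whether the $j$-th digit of $b$ is $1$ or $0$), while each $v_i$ has a $0$ in coordinate $i$, so no two vertices share a code. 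The matching lower bound $\adim(G) \ge k$ follows from Corollary~\ref{max_order_adimk} applied contrapositively: if $\adim(G) \le k-1$ then $|V(G)| \le (k-1) + 2^{k-1} < k + 2^k$, a contradiction.

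There is no real obstacle here; the whole argument reduces to invoking Corollary~\ref{max_order_adimk} in both directions and checking that the vertex $u_{1^k}$ in the extremal construction attains the advertised degree.
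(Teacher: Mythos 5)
Your proof is correct and follows essentially the same route as the paper: the upper bound is read off from Corollary~\ref{max_order_adimk}, and sharpness is witnessed by the vertex $u_{1^k}$ in the graph $B(K_k, K_{2^k})$ from Theorem~\ref{O_sharp}. You additionally spell out the verification that this graph has adjacency dimension exactly $k$ (via the resolving set $\{v_1,\dots,v_k\}$ and the contrapositive of Corollary~\ref{max_order_adimk}), which the paper leaves implicit; this is a welcome but not substantively different addition.
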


\begin{proof}
The upper bound is immediate from Corollary \ref{max_order_adimk}, while the upper bound is achieved by the vertex $u_{1^k}$ in $B(K_k, K_{2^k})$.
\end{proof}

In addition, we obtain a sharp upper bound on the clique number of graphs of adjacency dimension $k$ and graphs of broadcast dimension $k$. 

\begin{corollary}
The maximum possible clique number of any graph of adjacency dimension $k$ is $2^k$. Similarly, the maximum possible clique number of any graph of broadcast dimension $k$ is $2^k$.
\end{corollary}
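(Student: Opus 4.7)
The plan is to derive both upper bounds as an immediate consequence of Corollary~\ref{cor_bound2} (together with the Remark following it, which extends the same subgraph bound to adjacency dimension), applied to the subgraph induced by a maximum clique; sharpness will then follow from the graph $B(K_k, K_{2^k})$ already constructed in the proof of Theorem~\ref{O_sharp}.

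For the upper bound, I would argue as follows. Suppose $G$ contains a clique on $m \ge 2$ vertices. The induced subgraph $H \cong K_m$ satisfies $\diam(H) = 1$. If $\bdim(G) = k$, Corollary~\ref{cor_bound2} gives $m = |V(H)| \le (1+1)^k = 2^k$. The Remark immediately following Corollary~\ref{cor_bound2} states that the analogous subgraph bound holds when $\adim(G) = k$, so $m \le 2^k$ in that case as well. The case $m \le 1$ is trivial.

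For sharpness, I would take $G^\star = B(K_k, K_{2^k})$ from the proof of Theorem~\ref{O_sharp}. The set $\{v_1, \ldots, v_k\}$ is an adjacency resolving set, because each $u_b$ is uniquely identified by its neighborhood pattern in $\{v_1, \ldots, v_k\}$ and each $v_i$ is distinguished from every other vertex by $d_1(v_i, v_i) = 0$; hence $\adim(G^\star) \le k$. Since $|V(G^\star)| = k + 2^k$ and $j + 2^j$ is strictly increasing in $j$, Corollary~\ref{max_order_adimk} forces $\adim(G^\star) = k$. A quick diameter check shows that every non-adjacent pair in $G^\star$ lies at distance $2$: a pair $v_i, u_b$ with the $i$th bit of $b$ equal to $0$ is joined through any $u_{b'}$ whose $i$th bit is $1$, and $u_{0^k}$ reaches each $v_i$ in two steps through some other $u_{b'}$. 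Therefore $\diam(G^\star) = 2$, and Observation~\ref{obs_bdim}(b) gives $\bdim(G^\star) = \adim(G^\star) = k$. Finally, the $2^k$ vertices $\{u_b\}_{b \in \{0,1\}^k}$ form a clique $K_{2^k}$ in $G^\star$, so the bound is attained for both $\adim$ and $\bdim$.

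There is no real obstacle here; the only subtlety is ensuring that the extremal graph realizes broadcast dimension \emph{equal to} $k$ rather than merely at most $k$, which is why the diameter-$2$ check and Observation~\ref{obs_bdim}(b) are invoked to collapse $\bdim$ and $\adim$.
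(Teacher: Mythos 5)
Your proposal is correct and follows essentially the same route as the paper: the upper bound via Corollary~\ref{cor_bound2} (and its Remark for the adjacency version) applied to a clique of diameter $1$, and sharpness via $B(K_k, K_{2^k})$. The only difference is that you explicitly verify $\adim(B(K_k,K_{2^k}))=\bdim(B(K_k,K_{2^k}))=k$ using Corollary~\ref{max_order_adimk} and the diameter-$2$ observation, a detail the paper asserts without proof; your verification is sound.
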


\begin{proof}
The upper bound follows from Corollary \ref{cor_bound2}. The bound is achieved by the graph $G = B(K_k, K_{2^k})$, which has $\adim(G) = \bdim(G) = k$.
\end{proof}

The next result is sharp up to a constant factor, as shown by paths, cycles, and grid graphs.

\begin{proposition}\label{bdim_diam}
For graphs $G$ of diameter $d$, $\adim(G) \geq \bdim(G) \geq \frac{d}{3}$. 
\end{proposition}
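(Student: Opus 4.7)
The first inequality $\adim(G) \geq \bdim(G)$ is immediate from Observation~\ref{obs_bdim}(a), so I focus on $\bdim(G) \geq d/3$. The plan is a counting argument carried out along a diametral shortest path.

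I would fix $u, v \in V(G)$ with $d(u,v) = d$ and a shortest $u$-$v$ path $P : u = w_0, w_1, \ldots, w_d = v$, and let $f$ be a resolving broadcast attaining $c_f(G) = \bdim(G) = k$, with support $S = \supp_G(f)$. Since $P$ is shortest, $d(w_j, w_{j'}) = |j - j'|$ for all $j, j' \in \{0, \ldots, d\}$; this fact is the key technical ingredient.

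The main step is a local bound: for each $z \in S$ with $f(z) = i_z$, the ball $B_{i_z}(z) = \{v \in V(G) : d(v,z) \le i_z\}$ satisfies $|V(P) \cap B_{i_z}(z)| \le 2 i_z + 1$. Indeed, any two vertices $w_j, w_{j'}$ in this intersection satisfy $|j - j'| = d(w_j, w_{j'}) \le d(w_j, z) + d(z, w_{j'}) \le 2 i_z$, so the corresponding indices lie in an interval of length $2 i_z$. A companion covering bound is that at most one vertex of $V(P)$ can fail to belong to $\bigcup_{z \in S} B_{i_z}(z)$: any such ``unreached'' vertex $w$ satisfies $d_{f(z)}(w,z) = f(z)+1$ for every $z \in S$, so its broadcast code is the constant saturated vector, and two such vertices would share a code, contradicting that $f$ resolves.

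Putting these two ingredients together yields $d = |V(P)| - 1 \le \sum_{z \in S} |V(P) \cap B_{i_z}(z)| \le \sum_{z \in S} (2 i_z + 1) = 2k + |S|$, and since $f(z) \ge 1$ on $S$ forces $|S| \le k$, we conclude $d \le 3k$, i.e., $\bdim(G) \ge d/3$. The main obstacle is the local bound, which fundamentally requires $P$ to be shortest --- along an arbitrary path, shortcuts through $G$ could place many path vertices inside a single ball, and the bound $|V(P) \cap B_{i_z}(z)| \le 2 i_z + 1$ would fail.
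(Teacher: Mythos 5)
Your proof is correct and follows essentially the same approach as the paper: the paper's (very terse) argument just asserts the inequality $|\supp_G(f)|+2\sum_{v}f(v)\geq d$ along a diametral geodesic, which is exactly what your ball-covering and at-most-one-unresolved-vertex argument establishes in detail. Your write-up supplies the justification the paper omits, but there is no difference in method.
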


\begin{proof}
Suppose that $G$ is a graph of diameter $d$ with minimal path $v_1, \dots, v_{d+1}$ between two vertices $v_1, v_{d+1}$ with distance $d$, and let $f$ be a resolving broadcast that achieves $\bdim(G)$. Then $|\supp_G(f)|+2\sum_{v \in V(G)} f(v) \geq d$, which implies that $\bdim(G) \geq \frac{d}{3}$.~\hfill
\end{proof}

Thus we have a sharp bound on $\bdim(G)$ up to a constant factor for any graph $G$ with $\dim(G) = O(1)$, where the upper bound follows from the definition of $\bdim(G)$.

\begin{theorem}\label{bdim_diam}
For every graph $G$ of diameter $d$, $\frac{d}{3} \leq \bdim(G) \leq \dim(G) (d-1)$.
\end{theorem}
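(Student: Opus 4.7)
The lower bound $\tfrac{d}{3} \leq \bdim(G)$ is already established by Proposition~\ref{bdim_diam}, so the plan is to prove only the upper bound $\bdim(G) \leq \dim(G)(d-1)$. The approach is to explicitly construct a resolving broadcast with the claimed cost, by promoting a minimum resolving set to a broadcast in which every chosen vertex transmits with strength exactly $d-1$.

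First I would let $S \subseteq V(G)$ be a minimum resolving set of $G$, so $|S| = \dim(G)$, and define $f : V(G) \to \mathbb{Z}^+ \cup \{0\}$ by $f(v) = d-1$ if $v \in S$ and $f(v) = 0$ otherwise. The key observation is that the threshold $d-1$ is large enough that $d_{d-1}$ coincides with the true distance on $G$: since $\diam(G) = d$, for every $z \in S$ and every $x \in V(G)$ we have
\[
d_{f(z)}(x,z) = d_{d-1}(x,z) = \min\{d(x,z), d\} = d(x,z).
\]
Thus the broadcast code induced by $f$ carries exactly the same information as the metric code with respect to $S$.

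Next I would verify the resolving property: for any distinct $x, y \in V(G)$, because $S$ is a resolving set there exists $z \in S$ with $d(x,z) \neq d(y,z)$, and by the identity above this yields $d_{f(z)}(x,z) \neq d_{f(z)}(y,z)$ with $f(z) = d-1 > 0$. Hence $f$ is a resolving broadcast of $G$, and its cost is
\[
c_f(G) = \sum_{v \in V(G)} f(v) = |S|(d-1) = \dim(G)(d-1),
\]
so $\bdim(G) \leq \dim(G)(d-1)$.

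There is no real obstacle here; the construction is essentially forced by the definition, and the only subtlety worth flagging is that the threshold $d-1$ (rather than $d$) already suffices because all pairwise distances in $G$ are at most $d$, so the truncation $\min\{\cdot, d\}$ is inert. The sharpness remark about $\dim(G) = O(1)$ in the surrounding discussion then follows immediately by combining this upper bound with the proposition's lower bound.
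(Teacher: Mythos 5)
Your proof is correct and matches the paper's intended argument: the paper takes the lower bound from the preceding proposition and dismisses the upper bound as following ``from the definition of $\bdim(G)$,'' which is precisely your construction assigning strength $d-1$ to each vertex of a minimum resolving set so that $d_{d-1}(x,z)=\min\{d(x,z),d\}=d(x,z)$. The only caveat, inherited from the theorem statement itself rather than from your argument, is that the upper bound degenerates when $d=1$ (complete graphs), where $f\equiv 0$ is not a resolving broadcast; for $d\ge 2$ your argument is complete.
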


\begin{corollary}
If $G$ has diameter $d$ and $\dim(G) = O(1)$, then $\bdim(G) = \Theta(d)$. If $G$ has diameter $d = O(1)$, then $\bdim(G) = \Theta(\dim(G))$.
\end{corollary}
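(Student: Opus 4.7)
The corollary follows essentially immediately from the two-sided bound in Theorem~\ref{bdim_diam} combined with Observation~\ref{obs_bdim}(a), so the plan is to simply unpack the asymptotic notation in each of the two cases and apply those bounds.

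For the first statement, assume $G$ has diameter $d$ and $\dim(G) = O(1)$, so there is a constant $C$ with $\dim(G) \le C$. The upper bound half of Theorem~\ref{bdim_diam} gives $\bdim(G) \le \dim(G)(d-1) \le C(d-1) = O(d)$, while the lower bound half gives $\bdim(G) \ge d/3 = \Omega(d)$. Combining these yields $\bdim(G) = \Theta(d)$.

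For the second statement, assume $d = \diam(G) = O(1)$, so $d \le C'$ for some constant $C'$. Then the upper bound half of Theorem~\ref{bdim_diam} gives $\bdim(G) \le \dim(G)(d-1) \le (C'-1)\dim(G) = O(\dim(G))$, and Observation~\ref{obs_bdim}(a) gives the matching lower bound $\bdim(G) \ge \dim(G) = \Omega(\dim(G))$. Hence $\bdim(G) = \Theta(\dim(G))$.

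There is no real obstacle here: the content is entirely in Theorem~\ref{bdim_diam} and Observation~\ref{obs_bdim}(a). The only minor care needed is to note that the lower bound $\bdim(G) \ge d/3$ is what supplies the $\Omega(d)$ for the first statement, while the $\Omega(\dim(G))$ for the second statement comes not from Theorem~\ref{bdim_diam} itself but from the earlier observation that $\dim(G) \le \bdim(G)$. Once both ingredients are on the table, the proof is a two-line application in each case.
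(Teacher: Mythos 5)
Your proof is correct and follows exactly the route the paper intends: the corollary is stated without proof immediately after Theorem~\ref{bdim_diam}, and the intended derivation is precisely the combination of the bound $\frac{d}{3} \le \bdim(G) \le \dim(G)(d-1)$ with the inequality $\dim(G) \le \bdim(G)$ from Observation~\ref{obs_bdim}(a). Your explicit identification of which ingredient supplies each $\Omega$ bound is exactly right.
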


The next result is sharp up to a constant factor by Theorem \ref{O_sharp}.

\begin{theorem}\label{thm_log}
For all graphs $G$ of order $n$, $\adim(G) \geq \bdim(G) = \Omega(\log n)$.
\end{theorem}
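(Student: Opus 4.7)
The plan is to prove this by a direct counting argument on the number of possible broadcast codes. Let $G$ be a graph of order $n$, let $f$ be a resolving broadcast of $G$ achieving $\bdim(G)$, and let $S=\supp_G(f)=\{u_1,\ldots,u_k\}$ with $f(u_j)=i_j\ge 1$, so that $\bdim(G)=\sum_{j=1}^{k} i_j$. The first step is to observe that, for each $j$, the truncated distance $d_{i_j}(v,u_j)=\min\{d(v,u_j), i_j+1\}$ takes values only in $\{0,1,\ldots,i_j+1\}$, so the broadcast code $b_S(v)$ lies in a product set of size $\prod_{j=1}^{k}(i_j+2)$.

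Since $f$ is a resolving broadcast, any two distinct vertices of $G$ must be separated by some coordinate of $b_S$, and in particular all $n$ broadcast codes are distinct. Therefore
\begin{equation*}
n \le \prod_{j=1}^{k}(i_j+2).
\end{equation*}
Taking $\log_2$ on both sides yields $\log_2 n \le \sum_{j=1}^{k} \log_2(i_j+2)$.

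The key numerical step is to bound each $\log_2(i_j+2)$ linearly in $i_j$. For any positive integer $i$ one has $\log_2(i+2)\le (\log_2 3)\cdot i$, since the ratio $\log_2(i+2)/i$ is maximized at $i=1$ (where it equals $\log_2 3$) and is decreasing for $i\ge 1$, as can be checked by elementary calculus. Substituting this bound gives
\begin{equation*}
\log_2 n \le (\log_2 3)\sum_{j=1}^{k} i_j = (\log_2 3)\cdot \bdim(G),
\end{equation*}
so that $\bdim(G)\ge \log_3 n = \Omega(\log n)$. Combined with the bound $\adim(G)\ge\bdim(G)$ from Observation~\ref{obs_bdim}(a), this completes the proof. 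There is no serious obstacle here: the only thing to verify carefully is the inequality $\log_2(i+2)\le i\log_2 3$ for integers $i\ge 1$, which is a one-variable check.
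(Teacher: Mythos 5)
Your proof is correct, and it takes a genuinely different route from the paper's. You bound the number of available broadcast codes directly by $\prod_{j=1}^{k}(i_j+2)$ (each coordinate of a code lies in $\{0,1,\ldots,i_j+1\}$, and a resolving broadcast forces all $n$ codes to be distinct), and then convert this product bound into a linear bound on the cost via the elementary inequality $i+2\le 3^{i}$ for integers $i\ge 1$, yielding $\bdim(G)\ge \log_3 n$. The paper instead writes the slightly sharper counting inequality $y+\prod_{v\in\supp_G(f)}(f(v)+1)\ge n$ (where $y=|\supp_G(f)|$), applies the arithmetic--geometric mean inequality to pass from the product to the sum, and then minimizes the resulting expression $y(n/2)^{1/y}$ over $y$ by calculus, with a separate case when $y>\ln(n/2)$. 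The trade-off: the paper's optimization squeezes out a marginally better leading constant (roughly $\ln(n/2)$ versus your $\log_3 n\approx 0.91\ln n$), while your argument is shorter, avoids the case split and the calculus, and makes the information-theoretic content of the bound transparent. Both establish $\Omega(\log n)$, and your appeal to Observation~\ref{obs_bdim}(a) for $\adim(G)\ge\bdim(G)$ handles the remaining clause of the statement. One cosmetic remark: the monotonicity of $\log_2(i+2)/i$ that you propose to check by calculus can be replaced by the one-line induction $i+2\le 3^{i}$, which is all you actually use.
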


\begin{proof}
Suppose that $f$ is a resolving broadcast that achieves $\bdim(G)$, and let $y = |\supp_G(f)|$. The $\Omega(\log n)$ bound holds if $y > \ln (\frac{n}{2})$, so we suppose that $y \leq  \ln (\frac{n}{2})$. Since $f$ is a resolving broadcast for $G$, we must have $y+\prod_{v \in \supp_G(f)} (f(v)+1) \geq n$, which implies by the arithmetic-geometric mean inequality that $y+\sum_{v \in V(G)} f(v) \geq y (n-y)^{1/y}$, or equivalently $\sum_{v \in V(G)} f(v) \geq y (n-y)^{1/y}-y$. Since $y (n-y)^{1/y} \geq y(\frac{n}{2})^{1/y} \geq y e$ for $n$ sufficiently large, we have $\sum_{v \in V(G)} f(v) \geq \frac{e-1}{e} y(\frac{n}{2})^{1/y}$. 

Define $g(y) = \ln(\frac{e-1}{e} y(\frac{n}{2})^{1/y})$, so $g'(y) = \frac{1}{y}-\frac{\ln (\frac{n}{2})}{y^2}$, which has one root at $y = \ln (\frac{n}{2})$. This is a minimum since $g'(y) < 0$ for $y < \ln (\frac{n}{2})$ and $g'(y) > 0$ for $y > \ln (\frac{n}{2})$. Since $\ln(x)$ is an increasing function, $\frac{e-1}{e} y(\frac{n}{2})^{1/y}$ is also minimized at $y = \ln (\frac{n}{2})$, where it has value $(e-1) \ln (\frac{n}{2})$. Thus $\sum_{v \in V(G)} f(v) \geq (e-1) \ln (\frac{n}{2})$ in this case.~\hfill
\end{proof}

\section{Graphs $G$ having $\bdim(G)$ equal to $1$, $2$, and $|V(G)|-1$}\label{s:highlow}

Next, we characterize graphs $G$ having $\bdim(G)$ equal to $1$, $2$, and $|V(G)|-1$. We begin with the following known results on metric dimension and adjacency dimension.

\begin{theorem}\emph{\cite{tree1}}\label{dim_characterization}
Let $G$ be a connected graph of order $n$. Then 
\begin{itemize}
\item[(a)] $\dim(G)=1$ if and only if $G=P_n$;
\item[(b)] for $n \ge 4$, $\dim(G)=n-2$ if and only if $G=K_{s,t}$ ($s,t \ge 1$), $G=K_s+\overline{K}_t$ ($s\ge1, t\ge2$), or $G=K_s+(K_1 \cup K_t)$ ($s,t \ge 1$); 
\item[(c)] $\dim(G)=n-1$ if and only if $G=K_n$.
\end{itemize}
\end{theorem}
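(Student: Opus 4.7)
The three parts admit quite different arguments; I would prove (a) and (c) with short direct arguments and leave (b) for a more intricate twin-class analysis that is the main obstacle.

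For part (a), the direction $G=P_n\Rightarrow\dim(G)=1$ is witnessed by either endpoint $v$: the distances $d(v,x)$ run through $\{0,1,\ldots,n-1\}$, so $\{v\}$ resolves $G$. For the converse, suppose $\{v\}$ is a resolving singleton. Then the $n$ distances $d(v,x)$ are distinct nonnegative integers, and since $G$ is connected of order $n$ they must fill $\{0,1,\ldots,n-1\}$ exactly. Letting $w_i$ denote the unique vertex at distance $i$, each $w_i$ must be adjacent to $w_{i-1}$ (to realize distance $i$) and cannot be adjacent to any $w_j$ with $j\le i-2$ (otherwise $d(v,w_i)\le i-1$). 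So $G$ is exactly the path $w_0w_1\cdots w_{n-1}$.

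For part (c), in $K_n$ any two vertices outside a candidate set $S$ share the code $(1,\ldots,1)$, forcing $|V(G)-S|\le 1$, so $\dim(K_n)\ge n-1$; taking $S=V(G)-\{u\}$ gives equality. Conversely, suppose $\dim(G)=n-1$. Then for every pair $u,v$ the set $V(G)-\{u,v\}$ fails to resolve, and the only pair it can fail on is $\{u,v\}$ itself (every other pair is resolved by the vertex equal to one of its members). Hence $d(z,u)=d(z,v)$ for every $z\notin\{u,v\}$, so every pair in $V(G)$ is a twin pair in the sense of Observation~\ref{obs_twin}. If some non-adjacent twin pair $u,v$ existed, connectedness would give a common neighbor $w$; but the twin relation applied to the adjacent pair $u,w$ gives $N(u)-\{w\}=N(w)-\{u\}$, and since $v\in N(w)-\{u\}$ we would have $v\in N(u)$, contradicting non-adjacency. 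So every pair is an adjacent twin pair, and $G=K_n$.

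Part (b) is the main obstacle. I would attack it via the twin equivalence relation, partitioning $V(G)$ into twin classes $T_1,\ldots,T_k$; iterating Observation~\ref{obs_twin}(a) class by class gives the lower bound $\dim(G)\ge\sum_i(|T_i|-1)=n-k$. For the easy direction, each of $K_{s,t}$, $K_s+\overline{K}_t$, and $K_s+(K_1\cup K_t)$ has diameter at most $2$, and one checks directly that discarding one vertex from each non-singleton twin class (together with an extra distinguishing vertex in the $K_s+(K_1\cup K_t)$ family, to separate the singleton $K_1$-class from the $K_t$-class) yields a resolving set of size $n-2$, while matching lower bounds come from the twin count plus a small ad hoc check in the third family. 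For the hard direction, I would reduce to $\diam(G)\le 3$ using Theorem~\ref{dim_bound2} (since $\dim(G)=n-2$ forces $n$ small relative to $\diam(G)$), then case-analyze by $\diam(G)\in\{2,3\}$ and by the sizes and adjacencies of the twin classes, showing that any graph outside the three listed families either has enough independent twin structure to admit a resolving set of size at most $n-3$, or else collapses to one of the three listed shapes. The bookkeeping of all twin-class configurations—in particular pinning down $K_s+(K_1\cup K_t)$ as the unique borderline case where a single non-twin singleton contributes the extra unit beyond the pure twin bound—is where the real work lies.
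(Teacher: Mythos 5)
This theorem is quoted by the paper from the reference \cite{tree1} (Chartrand, Eroh, Johnson, Oellermann) and is not proved in the paper, so there is no in-paper argument to compare against; your proposal has to stand on its own. Your treatments of (a) and (c) do: for (a), the observation that a resolving singleton forces the distances to be exactly $0,1,\dots,n-1$ and that $w_i$ can only be adjacent to $w_{i\pm 1}$ is a complete and correct proof; for (c), the reduction to ``every pair is a twin pair'' followed by the argument that a non-adjacent twin pair would contradict twinness of an adjacent pair through a common neighbor is also correct and complete (noting that $d(z,u)=d(z,v)$ for all $z$ specializes to $N(u)-\{v\}=N(v)-\{u\}$ by looking at distance $1$).

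Part (b), however, is not proved: you give a plan and explicitly defer the converse direction (``any graph outside the three listed families \dots'') to unexecuted ``bookkeeping.'' That converse is essentially the entire content of (b) --- one must show that a connected graph of order $n\ge 4$ with $\dim(G)=n-2$ is forced into one of exactly three shapes, and this requires an actual case analysis of the twin-class structure (how many classes, which are cliques versus independent sets, and how the classes attach to one another), including ruling out configurations such as two disjoint non-adjacent twin classes with no dominating clique. Nothing in your sketch certifies that the analysis closes, so the gap is genuine. Two smaller points on the plan itself: your proposed reduction via Theorem~\ref{dim_bound2} is both anachronistic and weaker than necessary --- the elementary bound $\dim(G)\le n-\diam(G)$ from Theorem~\ref{dim_bounds} immediately gives $\diam(G)\le 2$ when $\dim(G)=n-2$, and since $G\ne K_n$ one gets $\diam(G)=2$ exactly, which is the cleaner starting point; and your lower-bound claim for the family $K_s+(K_1\cup K_t)$ (that $n-3$ vertices never suffice) is asserted as an ``ad hoc check'' but is precisely the kind of step that needs to be written out, since the naive twin-class bound only gives $n-3$ there.
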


\begin{theorem}\emph{\cite{adjacency_dim}}\label{adj_characterization}
Let $G$ be a graph of order $n$. Then 
\begin{itemize}
\item[(a)] $\adim(G)=1$ if and only if $G \in \{P_1, P_2, P_3, \overline{P}_2, \overline{P}_3\}$;
\item[(b)] $\adim(G)=n-1$ if and only if $G\in\{K_n, \overline{K}_n\}$.
\end{itemize}
\end{theorem}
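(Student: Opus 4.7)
The plan is to prove parts (a) and (b) separately, with the ``only if'' directions carrying the substantive content.

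For part (a), the ``if'' direction is a direct verification: for each of the five listed graphs one exhibits a single vertex whose adjacency code resolves the others. For example, in $P_3$ with vertex sequence $v_1 v_2 v_3$, the code vector $(d_1(v_i, v_1))_{i=1,2,3} = (0,1,2)$ has distinct entries, and the other four graphs are handled analogously. For the converse, suppose $\{z\}$ is an adjacency resolving set. Every adjacency code with respect to $\{z\}$ lies in $\{0,1,2\}$, forcing $|V(G)| \le 3$. A routine case split on $|V(G)| \in \{1,2,3\}$ finishes: in the three-vertex case, exactly one neighbor $u$ of $z$ (code $1$) and one non-neighbor $w$ of $z$ (code $2$) must be present, and the only remaining freedom is whether $uw$ is an edge, giving $P_3$ or $\overline{P}_3$.

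For part (b), the ``if'' direction follows from Observation \ref{obs_twin}(b): every pair in $K_n$ or $\overline{K}_n$ is a twin pair, so at most one vertex can be omitted from any adjacency resolving set; meanwhile, $V(G) \setminus \{v\}$ is easily checked to resolve all pairs in either graph. For the converse, the key claim is that $\adim(G) \geq n-1$ forces every pair of vertices to be twins. Indeed, if $u,v$ are not twins, then some $z \ne u,v$ is adjacent to exactly one of them, and $S = V(G) \setminus \{u,v\}$ is an adjacency resolving set of size $n-2$: the pair $\{u,v\}$ is separated by $z$; any $w \in S$ is distinguished from both $u$ and $v$ by having a $0$ in coordinate $w$ of its own code; and any two vertices of $S$ are distinguished by their own $0$-coordinates. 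This gives $\adim(G) \le n-2$, a contradiction.

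It remains to show that if every pair of vertices of $G$ is a twin pair, then $G \in \{K_n, \overline{K}_n\}$. I would check that the relation ``$u = v$ or $uv \in E(G)$'' is an equivalence relation on $V(G)$, where transitivity is the only nontrivial step and follows immediately from the twin condition ($uv, vw \in E(G)$ with $u \neq w$ gives $w \in N(v) \setminus \{u\} = N(u) \setminus \{v\}$). Hence $V(G)$ partitions into cliques pairwise joined by no edges. If two distinct cliques $C_1, C_2$ existed with $|C_1| \ge 2$, then picking $u \in C_1$ and $v \in C_2$ would yield non-adjacent twins satisfying $\emptyset \neq N(u) \subseteq C_1$ but also $N(u) = N(v) \subseteq C_2$, contradicting the disjointness of $C_1$ and $C_2$. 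Thus either $G$ consists of a single clique ($G = K_n$) or every clique is a singleton ($G = \overline{K}_n$). The main obstacle is precisely this last structural reduction; the rest of the argument is essentially bookkeeping around codes and twin pairs.
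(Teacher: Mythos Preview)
The paper does not prove Theorem~\ref{adj_characterization}; it is quoted as a known result from Jannesari and Omoomi~\cite{adjacency_dim}, so there is no in-paper proof to compare against. Your argument is correct and self-contained: the code-counting bound $|V(G)|\le 3$ for part~(a) and the twin analysis for part~(b) are exactly the right ideas, and the equivalence-relation reduction showing that ``all pairs twins'' forces $K_n$ or $\overline{K}_n$ is clean and complete.
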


Note that, if $f$ is a resolving broadcast of $G$ with $f(v)=2$ and $f(w)=0$ for each $w\in V(G)-\{v\}$, then $v$ is an end vertex of $P_4$ or $v$ is an end vertex of $P_3 \cup P_1$, and $\adim(P_4)=\adim(P_3 \cup P_1)=2$ as shown in Theorem~\ref{adimch}. Also, note that $\adim(G)=2$ implies $\bdim(G)=2$. So, Observation~\ref{obs_bdim}(a), Theorems~\ref{adimch},~\ref{dim_characterization} and~\ref{adj_characterization} imply the following proposition.

\begin{proposition}\label{bdim_characterization}
Let $G$ be a graph of order $n$. Then 
\begin{itemize}
\item[(a)] $\bdim(G)=1$ if and only if $G \in \{P_1, P_2, P_3, \overline{P}_2, \overline{P}_3\}$;
\item[(b)] $\bdim(G)=2$ if and only if $G\in\mathcal{H}_2-\mathcal{H}_1$ as described in Theorem~\ref{adimch} (see Figure~\ref{fig_adim2}); 
\item[(c)] $\bdim(G)=n-1$ if and only if $G\in\{K_n, \overline{K}_n\}$.
\end{itemize}
\end{proposition}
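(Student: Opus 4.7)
My plan is to reduce each of the three cases to a previously established characterization, exploiting the chain $\dim(G)\le\bdim(G)\le\adim(G)$ from Observation~\ref{obs_bdim}(a). The only non-routine step is the subcase in part~(b) where an optimal resolving broadcast assigns value~$2$ to a single vertex; I will handle this by direct inspection of the distance function, using that the output range $\{0,1,2,3\}$ of $d_2$ forces $|V(G)|\le 4$.

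For part~(a), if $\bdim(G)=1$ then the optimal broadcast consists of a single vertex $v$ with $f(v)=1$, making $\{v\}$ an adjacency resolving set and hence $\adim(G)=1$; conversely any singleton adjacency resolving set is itself a resolving broadcast of cost $1$. Thus part~(a) is equivalent to the $\adim(G)=1$ characterization of Theorem~\ref{adj_characterization}(a). For part~(b), if $\bdim(G)=2$ then the optimum is either supported on two vertices of value $1$ each (which immediately gives $\adim(G)\le 2$), or on a single vertex $v$ with $f(v)=2$. In the latter case the map $x\mapsto d_2(x,v)$ must be injective on $V(G)$, and the remark preceding the proposition (together with Theorem~\ref{adimch}) identifies $G$ as $P_4$ or $P_3\cup P_1$, both of which already satisfy $\adim=2$. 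Combined with $\adim(G)\ge\bdim(G)=2$ and part~(a) to rule out $\adim(G)=1$, this yields $\adim(G)=2$, and Theorem~\ref{adimch} places $G\in\mathcal{H}_2-\mathcal{H}_1$; the converse direction uses the same containments in reverse.

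For part~(c), $\bdim(G)=n-1$ together with the sandwich $\bdim(G)\le\adim(G)\le n-1$ forces $\adim(G)=n-1$, and Theorem~\ref{adj_characterization}(b) gives $G\in\{K_n,\overline{K}_n\}$. For the converse, $\dim(K_n)=n-1$ follows from Theorem~\ref{dim_characterization}(c), while $\dim(\overline{K}_n)=n-1$ holds because every two distinct vertices of $\overline{K}_n$ form a twin pair, so Observation~\ref{obs_twin}(a) forces any resolving set to contain at least $n-1$ vertices; sandwiching between $\dim$ and $\adim$ then yields $\bdim(G)=n-1$ in both cases. The main obstacle throughout is the single-high-value subcase in part~(b), but once reduced to the four-vertex check it is entirely mechanical; all remaining content is straightforward bookkeeping with the $\dim$–$\bdim$–$\adim$ inequalities and the cited classifications.
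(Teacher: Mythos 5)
Your argument is correct and is essentially the paper's own proof: the paper likewise reduces each case to the $\adim$ characterizations (Theorems~\ref{adj_characterization} and~\ref{adimch}) via the sandwich $\dim(G)\le\bdim(G)\le\adim(G)$, and handles the one nontrivial subcase of (b) — a single vertex of broadcast value $2$ — by observing that injectivity of $x\mapsto d_2(x,v)$ forces $G\in\{P_4,\,P_3\cup P_1\}$, both of adjacency dimension $2$. Your separate twin-vertex argument for $\dim(\overline{K}_n)=n-1$ in part (c) is a minor (and appropriate) addition, since Theorem~\ref{dim_characterization}(c) is stated only for connected graphs.
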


The next two questions about graphs with high adjacency dimension and broadcast dimension are both open.

\begin{question}What graphs $G$ of order $n$ satisfy $\adim(G)=n-2$? \end{question}

\begin{question}What graphs $G$ of order $n$ satisfy $\bdim(G)=n-2$? \end{question}

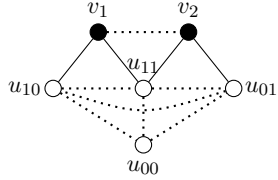
\begin{figure}[ht]
\centering
\begin{tikzpicture}[scale=.75, transform shape]

\node [draw, fill=black, shape=circle, scale=.8] (u) at  (-0.3, 1) {};
\node [draw, fill=black, shape=circle, scale=.8] (v) at  (1.3, 1) {};
\node [draw, shape=circle, scale=.8] (0) at  (-1.1, 0) {};
\node [draw, shape=circle, scale=.8] (1) at  (0.5, 0) {};
\node [draw, shape=circle, scale=.8] (2) at  (2.1, 0) {};
\node [draw, shape=circle, scale=.8] (3) at  (0.5, -1) {};

\node [scale=1.1] at (-0.3,1.4) {$v_1$};
\node [scale=1.1] at (1.3,1.4) {$v_2$};

\node [scale=1.1] at (0.5, 0.4) {$u_{11}$};
\node [scale=1.1] at (-1.6,0) {$u_{10}$};
\node [scale=1.1] at (2.6,0) {$u_{01}$};
\node [scale=1.1] at (0.5,-1.4) {$u_{00}$};

\draw(0)--(u)--(1)--(v)--(2);
\draw[thick,dotted](u)--(v);
\draw[thick,dotted](0)--(1)--(2).. controls (0.5,-0.5) .. (0);\draw[thick,dotted](0)--(3)--(2);\draw[thick, dotted](1)--(3);

\end{tikzpicture}
\caption{\small The graphs $G$ satisfying $\adim(G)=2$, where black vertices must be present, a solid edge must be present whenever the two vertices incident to the solid edge are in the graph, but a dotted edge is not necessarily present.}\label{fig_adim2}
\end{figure}

A graph is \emph{planar} if it can be drawn in a plane without edge crossing. For two graphs $G$ and $H$, $H$ is called a \emph{minor} of $G$ if $H$ can be obtained from $G$ by vertex deletion, edge deletion, or edge contraction.

\begin{theorem}\emph{\cite{wagner}}\label{minor}
A graph $G$ is planar if and only if neither $K_5$ nor $K_{3,3}$ is a minor of $G$.
\end{theorem}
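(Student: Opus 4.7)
I would handle the two directions separately.

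For the forward implication, I would argue that planarity is preserved under minors and that $K_5$ and $K_{3,3}$ are themselves non-planar. Edge deletion and vertex deletion trivially preserve planarity, and an edge contraction in a planar drawing can be realized by collapsing the edge along a short arc without introducing crossings, so minors of planar graphs are planar. To see $K_5$ and $K_{3,3}$ are non-planar, I would apply Euler's formula $|E(G)| \le 3|V(G)|-6$ for simple planar graphs ($K_5$ has $10 > 9$), and its bipartite refinement $|E(G)|\le 2|V(G)|-4$ ($K_{3,3}$ has $9>8$). Hence if $G$ is planar, it has neither $K_5$ nor $K_{3,3}$ as a minor.

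For the reverse direction, the plan is to invoke Kuratowski's theorem: a graph is planar if and only if it contains no subdivision of $K_5$ or $K_{3,3}$. Every subdivision of a graph $H$ is in particular an $H$-minor, so if $G$ is non-planar, then Kuratowski's theorem yields a $K_5$- or $K_{3,3}$-subdivision of $G$, which immediately gives the desired $K_5$- or $K_{3,3}$-minor. Equivalently, by contrapositive, the absence of $K_5$- and $K_{3,3}$-minors implies the absence of any Kuratowski subdivision, and hence planarity.

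The main obstacle is Kuratowski's theorem itself. A standard proof proceeds by a minimal-counterexample argument: assume $G$ is an edge-minimal non-planar graph containing no $K_5$- or $K_{3,3}$-subdivision. I would first show $G$ must be $3$-connected by pasting planar embeddings across any $1$- or $2$-separation, using edge-minimality. For $3$-connected $G$, I would then invoke the lemma that every $3$-connected graph on at least five vertices has an edge $e$ such that $G/e$ is still $3$-connected, apply induction to obtain a planar embedding of $G/e$, and extend it to a planar embedding of $G$ unless a careful case analysis on the local neighborhood of the contracted vertex produces a $K_5$- or $K_{3,3}$-subdivision in $G$, giving a contradiction. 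The technical heart of the argument is precisely this extension-versus-forced-subdivision dichotomy for $3$-connected graphs.
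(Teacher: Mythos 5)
This statement is Wagner's theorem, which the paper imports verbatim from \cite{wagner} and does not prove; there is no in-paper argument to compare yours against, so I can only assess your proposal on its own terms. Your outline is the standard and correct derivation. The forward direction is right: deletions trivially preserve planarity, contractions can be realized in a drawing by sliding one endpoint along the contracted edge, and the edge counts $10 > 3\cdot 5 - 6$ for $K_5$ and $9 > 2\cdot 6 - 4$ for $K_{3,3}$ rule out planarity of the two obstructions. The reverse direction correctly reduces to Kuratowski's theorem via the observation that a subdivision of $H$ contained as a subgraph yields an $H$-minor, so absence of $K_5$- and $K_{3,3}$-minors forces absence of Kuratowski subdivisions and hence planarity. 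Be aware that essentially all of the difficulty is concentrated in Kuratowski's theorem itself, which you only sketch; the sketch you give (reduce a minimal counterexample to the $3$-connected case by pasting embeddings across small separators, then use Thomassen's contractible-edge lemma and the extension-or-forced-subdivision case analysis around the split vertex) is the standard Thomassen-style proof and is sound, but writing out the $2$-separation pasting and the final case analysis in full is a nontrivial amount of work that the plan elides. One small wording point: you should say Kuratowski yields a $K_5$- or $K_{3,3}$-subdivision \emph{as a subgraph of} $G$, not ``of $G$''; the logic you then apply is correct.
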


\begin{remark}\label{bdim2_planar}
It was shown in~\cite{tree2} that there exists a non-planar graph $G$ with $\dim(G)=2$. However, $\adim(G)=2$ ($\bdim(G)=2$, respectively) implies $G$ is planar (see Figure~\ref{fig_adim2}). Also, note that, for each $k \ge 3$, there exists a non-planar graph $G$ satisfying $\bdim(G)=k$ and $\adim(G)=k$, respectively. For example, the graph $G$ of order $n=k+2^k$ with $\bdim(G)=k$ ($\adim(G)=k$, respectively) described in the proof of Theorem~\ref{O_sharp} contains $K_{2^k}$ as a subgraph. Since $K_{2^k}$, for $k \ge 3$, contains $K_5$ as a minor, $G$ is not planar by Theorem~\ref{minor}.
\end{remark}

\section{Comparing $\dim(G)$, $\adim(G)$, and $\bdim(G)$}\label{s:comparing}

Next, we provide a connected graph $G$ such that both $\adim(G)-\bdim(G)$ and $\bdim(G)-\dim(G)$ can be arbitrarily large. In fact, we obtain the stronger result that $\frac{\adim(G)}{\bdim(G)}$ and $\frac{\bdim(G)}{\dim(G)}$ can be arbitrarily large. We first recall some results on grid graphs.

\begin{proposition}\emph{\cite{cartesian_product}}\label{dim_grid}
For the grid graph $G=P_m \times P_n$ ($m,n \ge 2$), $\dim(G)=2$. 
\end{proposition}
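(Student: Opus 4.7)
The plan is to establish matching upper and lower bounds. For the lower bound, I would invoke Theorem~\ref{dim_characterization}(a): since $m, n \geq 2$, the grid $G = P_m \times P_n$ contains a $4$-cycle and hence is not isomorphic to a path, so $\dim(G) \geq 2$.

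For the upper bound, I would exhibit an explicit resolving set of size two by taking two corners along a single side. Label the vertices of $P_m \times P_n$ as ordered pairs $(i,j)$ with $1 \leq i \leq m$ and $1 \leq j \leq n$, adjacent exactly when they agree in one coordinate and differ by $1$ in the other. The standard formula for distances in a Cartesian product of graphs (or a direct induction on shortest-path length) gives the Manhattan formula $d((i_1,j_1),(i_2,j_2)) = |i_1 - i_2| + |j_1 - j_2|$. Choose $S = \{(1,1),\, (1,n)\}$. For any vertex $(i,j)$ the metric code is $r_S((i,j)) = ((i-1)+(j-1),\, (i-1)+(n-j))$. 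The sum of its two coordinates is $2(i-1)+(n-1)$, which recovers $i$, and the difference is $2j - n - 1$, which recovers $j$. Hence $r_S$ separates every pair of distinct vertices, so $S$ is a resolving set and $\dim(G) \leq 2$.

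The only mildly subtle step is justifying the Manhattan-distance formula, which is a textbook fact about Cartesian products of graphs and can if needed be verified by induction on $|i_1 - i_2| + |j_1 - j_2|$. With that in hand, the two bounds combine to give $\dim(P_m \times P_n) = 2$, and I do not foresee any essential obstacle in this argument.
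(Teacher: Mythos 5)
The paper gives no proof of this proposition; it is quoted from \cite{cartesian_product}, where the standard argument is exactly the one you give (two corners on a common side, Manhattan distances). Your verification is correct: the sum and difference of the two code entries recover $i$ and $j$ respectively, and the lower bound follows from Theorem~\ref{dim_characterization}(a) since the grid is not a path.
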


\begin{proposition}\emph{\cite{tree2}}\label{d-grid}
For the $d$-dimensional grid graph $G=\prod_{i=1}^{d}P_{n_i}$, where $d \ge 2$ and $n_i \ge 2$ for each $i \in \{1,\ldots,d\}$, $\dim(G) \le d$.
\end{proposition}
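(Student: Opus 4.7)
The plan is to explicitly exhibit a resolving set of size $d$ in $G = \prod_{i=1}^{d} P_{n_i}$, which immediately yields the bound. The setup I would use is the standard coordinate labeling: identify each vertex of $G$ with a tuple $(x_1, \ldots, x_d)$, where $0 \le x_i \le n_i - 1$, so that the graph distance factors as $d((x_1,\ldots,x_d),(y_1,\ldots,y_d)) = \sum_{i=1}^{d} |x_i - y_i|$. This product formula for distance in a Cartesian product of paths is what makes this problem tractable; everything else is linear algebra on the coordinate.

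With that in hand, I would choose the set $S = \{s_1, s_2, \ldots, s_d\}$, where $s_1 = (0, 0, \ldots, 0)$ is the origin corner and, for $i \ge 2$, $s_i = (0, \ldots, 0, n_i - 1, 0, \ldots, 0)$ is the vertex whose $i$-th coordinate is $n_i - 1$ and all others are $0$. Then for any vertex $v = (x_1, \ldots, x_d)$ we have $d(v, s_1) = \sum_{j=1}^{d} x_j$, while for $i \ge 2$, $d(v, s_i) = (n_i - 1 - x_i) + \sum_{j \ne i} x_j$. Subtracting yields the identity $d(v, s_1) - d(v, s_i) = 2x_i - (n_i - 1)$ for each $i \ge 2$, which recovers $x_i$ uniquely. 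Having determined $x_2, \ldots, x_d$, we finally read off $x_1 = d(v, s_1) - \sum_{j=2}^{d} x_j$.

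Thus the metric code $r_S(v)$ determines $v$ uniquely, so $S$ is a resolving set of $G$. Since $|S| = d$, we conclude $\dim(G) \le d$, as claimed. There is no real obstacle here beyond picking the right set of landmarks: the only nontrivial observation is that a single corner $s_1$ contributes the total $\sum x_j$ while each other $s_i$ flips the sign on exactly one coordinate, so their pairwise differences isolate the individual coordinates. The hypothesis $n_i \ge 2$ ensures the vertices $s_i$ are all distinct and distinct from $s_1$.
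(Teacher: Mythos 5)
Your proof is correct. The paper states this proposition as a cited result from Khuller, Raghavachari, and Rosenfeld and gives no proof of its own, so there is nothing internal to compare against; your argument --- taking the origin corner together with the $d-1$ ``far'' corners along each remaining axis, using the additivity of distance over the Cartesian product to get $d(v,s_1)-d(v,s_i)=2x_i-(n_i-1)$, and then recovering $x_1$ from $d(v,s_1)$ --- is the standard resolving-set construction for grids and is exactly what one would expect from the cited source.
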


With Theorem \ref{bdim_diam}, propositions~\ref{dim_grid} and~\ref{d-grid} immediately imply the following corollary.

\begin{corollary}\label{grid}
If $G$ is the grid graph $P_m \times P_n$ ($m,n\ge 2$), then $\bdim(G) = \Theta(m+n)$. More generally, if $G$ is the $d$-dimensional grid graph $\prod_{i = 1}^{d} P_{n_i}$ with $n_i \ge 2$ for each $i=1, \dots, d$, then $\bdim(G) = \Theta(\sum_{i = 1}^{d} n_i)$, where the constant in the upper bound depends on $d \ge 2$.
\end{corollary}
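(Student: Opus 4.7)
The corollary is really just a two-line deduction from the three results quoted immediately before it, so the plan is to make those pieces fit together cleanly.

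First, I would compute (or recall) the diameter of the Cartesian product of paths. Because distance in a Cartesian product of graphs is the sum of distances in the factors, $\diam\bigl(\prod_{i=1}^{d} P_{n_i}\bigr) = \sum_{i=1}^{d}(n_i-1)$. In particular, $\diam(P_m\times P_n)=m+n-2$. This is the only combinatorial computation involved.

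Next, I would apply Theorem~\ref{bdim_diam} to both directions. For the lower bound, $\bdim(G)\ge \diam(G)/3 = \tfrac{1}{3}\sum_{i=1}^d (n_i-1) = \Omega(\sum_{i=1}^d n_i)$, and likewise $\ge \tfrac{m+n-2}{3}=\Omega(m+n)$ in the two-dimensional case. For the upper bound, combine the second inequality of Theorem~\ref{bdim_diam} with Proposition~\ref{dim_grid} in the 2D case to get $\bdim(G)\le 2(m+n-3)=O(m+n)$, and with Proposition~\ref{d-grid} in the general case to get $\bdim(G)\le d\bigl(\sum_{i=1}^d(n_i-1)-1\bigr)=O\bigl(\sum_{i=1}^d n_i\bigr)$, where the hidden constant is linear in $d$. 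Thus the upper bound constant depends on $d$, as the statement warns.

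Putting the matching $\Omega$ and $O$ bounds together yields $\bdim(P_m\times P_n)=\Theta(m+n)$ and, for fixed $d\ge 2$, $\bdim\bigl(\prod_{i=1}^d P_{n_i}\bigr)=\Theta\bigl(\sum_{i=1}^d n_i\bigr)$.

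There is no real obstacle here; the only thing to double-check is the direction of the chain of implications: we genuinely use \emph{both} halves of Theorem~\ref{bdim_diam}, the lower half $\bdim\ge\diam/3$ being the one that converts the modest known bound $\dim\le d$ from Proposition~\ref{d-grid} into a tight asymptotic statement. If a cleaner constant in the 2D case were desired, one could instead construct a resolving broadcast directly by placing weight~$1$ along two ``coordinate axes'' of the grid and verifying that adjacency-style codes distinguish all vertices, but this is not needed for the $\Theta$ statement.
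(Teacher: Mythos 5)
Your proposal is correct and follows exactly the paper's (one-line) derivation: the authors likewise obtain the corollary immediately by combining the diameter of the grid with both halves of Theorem~\ref{bdim_diam} and the metric-dimension bounds of Propositions~\ref{dim_grid} and~\ref{d-grid}. Your explicit computation of $\diam\bigl(\prod_{i=1}^{d}P_{n_i}\bigr)=\sum_{i=1}^{d}(n_i-1)$ and the resulting constants are all accurate.
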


\begin{theorem}\label{comparison_3dim}
For $k \geq 2$, let $G$ be the $d$-dimensional grid graph $\prod_{i = 1}^{d} P_k$. Then $\bdim(G) = \Theta(k)$, and $\adim(G)=\Theta(k^d)$, where the constants in the bounds depend on $d$. So, $\frac{\adim(G)}{\bdim(G)}$ and $\frac{\bdim(G)}{\dim(G)}$ can be arbitrarily large.
\end{theorem}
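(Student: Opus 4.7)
The plan is to derive three estimates and then combine them. First, I would invoke Corollary~\ref{grid} to get $\bdim(G) = \Theta(\sum_{i=1}^d k) = \Theta(dk)$, which is $\Theta(k)$ for fixed $d$, with constants depending on $d$.

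Second, the upper bound $\adim(G) = O(k^d)$ is immediate from the trivial inequality $\adim(G) \le |V(G)| - 1 = k^d - 1$ recorded in Observation~\ref{obs_bdim}(a).

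The main work is the matching lower bound $\adim(G) = \Omega(k^d)$, where I expect the argument for the first two bounds (trivial citations) to be easy and this step to be the only real content. The key observation I would use is that every vertex of the $d$-dimensional grid $G = \prod_{i=1}^d P_k$ has degree at most $2d$, so each closed neighborhood satisfies $|N[v]| \le 2d + 1$. For any adjacency resolving set $S$ of $G$, every vertex $v \in V(G) \setminus \bigcup_{s \in S} N[s]$ has adjacency code equal to ${\bf 2}_{|S|}$ with respect to $S$, so at most one such vertex can exist. Combining these two facts yields
$$k^d = |V(G)| \le |S|(2d + 1) + 1,$$
which rearranges to $|S| \ge \tfrac{k^d - 1}{2d + 1}$, so $\adim(G) = \Omega(k^d)$ for fixed $d$.

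Combining the three estimates, for fixed $d \ge 2$ I would conclude $\adim(G)/\bdim(G) = \Theta(k^{d-1})$ and, using $\dim(G) \le d$ from Proposition~\ref{d-grid}, $\bdim(G)/\dim(G) = \Omega(k)$. Letting $k \to \infty$ (with $d \ge 2$ fixed) makes both ratios arbitrarily large, which is what the last sentence of the theorem requires. The only genuinely new argument is the closed-neighborhood packing for the $\adim$ lower bound; the remaining pieces are direct references to earlier results in the excerpt.
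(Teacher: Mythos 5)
Your proposal is correct and follows essentially the same route as the paper: cite Corollary~\ref{grid} for $\bdim(G)=\Theta(k)$, use $|V(G)|=k^d$ for the upper bound on $\adim(G)$, and prove $\adim(G)=\Omega(k^d)$ from the fact that at most one vertex can have adjacency code ${\bf 2}_{|S|}$, so an adjacency resolving set must dominate all but one vertex of a bounded-degree graph. The only (cosmetic) difference is in the counting: the paper packs disjoint $\prod_{i=1}^{d}P_3$ blocks and notes at most one can miss $S$, while you apply a union bound $|V(G)|-1\le |S|(2d+1)$ over closed neighborhoods; both give the same $\Omega(k^d)$ bound with constants depending on $d$.
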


\begin{proof}
Note that $\dim(G) \leq d$ by Proposition~\ref{d-grid} and $\bdim(G) = \Theta(k)$ by Corollary~\ref{grid}.
To see that $\adim(G)=\Theta(k^d)$, first note that $\adim(G)=O(k^d)$ since $|V(G)| = O(k^d)$. Moreover, any adjacency resolving set of $G$ must contain at least one vertex from every $\prod_{i = 1}^{d} P_3$ subgraph of $G$ except for at most one, so $\adim(G) = \Omega(k^d)$.~\hfill
\end{proof}

In the next result, we show that the multiplicative gap between $\bdim(G)$ and $\adim(G)$ in Theorem \ref{comparison_3dim} is tight up to a constant factor. To state this result, we define $\Delta'(G)$ to be the maximum value of $t$ for which there exists a positive integer $j$ and a vertex $v \in V(G)$ such that there exist at least $t$ distinct vertices $u_1, \dots, u_t \in V(G)$ with $d_G(u_i, v) = j$ for each $i = 1, \dots, t$. Note that when $G = P_k \times P_k$, we have $\Delta'(G) = \Theta(k)$, so $\frac{\adim(G)}{\bdim(G)} = \Theta(k) = \Theta(\Delta'(G))$.

\begin{proposition}
For all graphs $G$, $\frac{\adim(G)}{\bdim(G)} = O(\Delta'(G))$.
\end{proposition}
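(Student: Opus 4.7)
The plan is to directly convert a minimum resolving broadcast into an adjacency resolving set by replacing each broadcasting vertex with a ball of vertices around it. Let $f$ be a resolving broadcast of $G$ achieving $\bdim(G)$, and for each $v \in \supp_G(f)$ with $f(v) = i$, define $T_v = \{w \in V(G) : d(w,v) \leq i\}$ (the closed ball of radius $i$ around $v$). I claim that $T = \bigcup_{v \in \supp_G(f)} T_v$ is an adjacency resolving set of $G$.

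To verify this, I take any distinct $x, y \in V(G)$ and use the fact that $f$ is a resolving broadcast to find $v \in \supp_G(f)$ with $d_{f(v)}(x,v) \neq d_{f(v)}(y,v)$. Writing $i = f(v)$ and assuming without loss of generality that $a := \min(d(x,v), i+1) < \min(d(y,v), i+1) =: b$, so $a \leq i$. If $a = 0$, then $x = v \in T_v$ and $v$ itself adjacency-distinguishes $x$ from $y$. If $a \geq 1$, I pick a vertex $u$ adjacent to $x$ on a shortest $v$–$x$ path, so $d(u,v) = a-1 \leq i-1$ and $u \in T_v$. Then $d_1(x,u) = 1$, while the triangle inequality gives $d(y,u) \geq d(y,v) - d(u,v) \geq b - (a-1) \geq 2$, so $d_1(y,u) = 2 \neq 1$. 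Thus $u$ adjacency-distinguishes $x$ and $y$.

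For the size bound, note that $T_v$ consists of $v$ itself together with at most $\Delta'(G)$ vertices at each distance $1, 2, \ldots, i$, so $|T_v| \leq 1 + i \cdot \Delta'(G) = 1 + f(v)\Delta'(G)$. Summing over $v \in \supp_G(f)$ and using $|\supp_G(f)| \leq \sum_v f(v) = \bdim(G)$, we obtain
\[
\adim(G) \leq |T| \leq |\supp_G(f)| + \Delta'(G)\sum_{v \in \supp_G(f)} f(v) \leq (\Delta'(G)+1)\,\bdim(G),
\]
so $\adim(G)/\bdim(G) \leq \Delta'(G) + 1 = O(\Delta'(G))$, as desired.

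The only conceptually delicate step is the adjacency-distinguishing argument in the second paragraph, and the crucial observation is that the neighbor $u$ of $x$ on the $v$–$x$ geodesic still lies in $T_v$ (because its distance to $v$ drops by exactly one), which is why $T_v$ needs radius $i$ rather than $i-1$. Everything else is straightforward counting using the definition of $\Delta'(G)$ as a uniform upper bound on sphere sizes and the fact that broadcast strengths on the support are at least one.
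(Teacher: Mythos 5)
Your proof is correct and follows essentially the same approach as the paper: replace each support vertex $v$ of a minimum resolving broadcast by the closed ball of radius $f(v)$ around it and bound the size via $\Delta'(G)$. In fact, your second paragraph supplies the verification that the resulting set is actually an adjacency resolving set, a step the paper's proof leaves implicit.
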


\begin{proof}
Given a resolving broadcast $f$ of $G$ with $\sum_{v \in V(G)} f(v) = \bdim(G)$, we show how to convert $f$ into an adjacency resolving set for $G$ which uses at most $(\Delta'(G)+1) \bdim(G)$ vertices. Let $v$ be a vertex $v \in V(G)$ with $f(v) > 0$. If $f(v) = 1$, then we put the vertex $v$ into the adjacency resolving set for $G$. If $f(v) > 1$, then for each $k > 0$, we list the vertices $u_1, \dots, u_t$ with $d_G(u_i, v) = k$, and we add each vertex $u_1, \dots, u_t$ into the adjacency resolving set for $G$, as well as the vertex $v$. Thus we add at most $(\Delta'(G)+1)$ vertices to the adjacency resolving set for each vertex $v \in V(G)$ with $f(v) > 0$ and each positive integer $k$ with $k \leq f(v)$. This implies that $\adim(G) \leq (\Delta'(G)+1) \bdim(G)$.
\end{proof}

The proof of the last proposition also implies the following proposition.

\begin{proposition}
For all graphs $G$ of order $n$, $\bdim(G)\Delta'(G) = \Omega(n)$.
\end{proposition}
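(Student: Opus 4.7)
The plan is to extract the covering argument implicit in the preceding proposition and apply it directly, rather than routing through $\adim(G)$. Let $f$ be a resolving broadcast of $G$ achieving $\sum_{v\in V(G)} f(v)=\bdim(G)$, and call a vertex $w\in V(G)$ \emph{uncovered by $f$} if $d(w,v)>f(v)$ for every $v\in\supp_G(f)$. Unwinding the definition of $d_k$, any uncovered vertex has broadcast code whose $v$-coordinate equals $f(v)+1$ for each $v\in\supp_G(f)$; these codes are all identical, so the resolving property forces there to be at most one uncovered vertex. Hence the closed balls $B_{f(v)}(v)=\{u\in V(G):d(u,v)\le f(v)\}$, taken over $v\in\supp_G(f)$, together contain at least $n-1$ vertices of $G$.

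Next I would control each $|B_{f(v)}(v)|$ using $\Delta'(G)$: by definition, for every positive integer $j$ there are at most $\Delta'(G)$ vertices at distance exactly $j$ from $v$, while only $v$ itself lies at distance $0$, so $|B_{f(v)}(v)|\le 1+f(v)\,\Delta'(G)$. Summing over $v\in\supp_G(f)$ and using $|\supp_G(f)|\le\bdim(G)$ gives
\[
n-1 \;\le\; \sum_{v\in\supp_G(f)}\bigl(1+f(v)\,\Delta'(G)\bigr) \;\le\; \bdim(G)\,(1+\Delta'(G)).
\]
Provided $\Delta'(G)\ge 1$ (as is the case whenever $G$ has at least one edge), we have $1+\Delta'(G)\le 2\Delta'(G)$, and the inequality rearranges to $\bdim(G)\,\Delta'(G)\ge (n-1)/2 = \Omega(n)$, which is the desired bound.

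The only nontrivial ingredient is the observation that uncovered vertices all share the same broadcast code; everything else is a routine one-line summation, so I do not anticipate a serious obstacle. This matches the authors' remark that the preceding proof already carries this statement: there the union $\bigcup_{v} B_{f(v)}(v)$ was enumerated in order to construct an adjacency resolving set, and here we simply reinterpret the same count as a lower bound on the number of vertices that $f$ can possibly distinguish.
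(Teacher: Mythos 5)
Your argument is correct and is exactly the one the paper has in mind: the authors give no separate proof, only the remark that the preceding proposition's construction (the union of the balls $B_{f(v)}(v)$ over $\supp_G(f)$, which has size at most $(\Delta'(G)+1)\bdim(G)$) yields the bound, and your observation that all uncovered vertices share the broadcast code $(f(v)+1)_{v\in\supp_G(f)}$ — hence at most one vertex is uncovered and the union has size at least $n-1$ — is precisely the missing link. Your explicit proviso $\Delta'(G)\ge 1$ is a reasonable way to handle the degenerate edgeless case, which the paper's statement silently ignores.
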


It was shown in~\cite{linegraph} that metric dimension is not a monotone parameter on subgraph inclusion; see~\cite{linegraph} for an example satisfying $H \subset G$ and $\dim(H) > \dim(G)$. Next, we show that for two graphs $G$ and $H$ with $H \subset G$, $\dim(H)-\dim(G)$, $\bdim(H)-\bdim(G)$, and $\adim(H)-\adim(G)$ can be arbitrarily large. In fact, we obtain the stronger result that $\frac{\dim(H)}{\dim(G)}$, $\frac{\bdim(H)}{\bdim(G)}$, and $\frac{\adim(H)}{\adim(G)}$ can be arbitrarily large.

\begin{theorem}\label{remark_difference}
There exist connected graphs $G$ and $H$ such that $H \subset G$ and $\frac{\dim(H)}{\dim(G)}$, $\frac{\bdim(H)}{\bdim(G)}$, and $\frac{\adim(H)}{\adim(G)}$ can be arbitrarily large.
\end{theorem}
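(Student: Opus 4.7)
The plan is to re-use the graphs from the proof of Theorem~\ref{O_sharp}. Fix $k\ge 1$ and let $G=G_k$ be the graph on $n=k+2^k$ vertices from that construction, with vertex set $\{v_1,\ldots,v_k\}\cup\{u_b:b\in\{0,1\}^k\}$, each side forming a clique and $u_bv_j\in E(G)$ iff the $j$-th digit of $b$ equals $1$. As shown in the proof of Theorem~\ref{O_sharp} (and in Corollary~\ref{adim_olgn}), the set $\{v_1,\ldots,v_k\}$ is simultaneously the support of a resolving broadcast of total cost $k$ and an adjacency resolving set of $G$, so by Observation~\ref{obs_bdim}(a) we have $\dim(G)\le\bdim(G)\le\adim(G)\le k$.

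For the subgraph $H$, note that $u_{1^k}$ is adjacent in $G$ to every $v_j$ (since all digits of $1^k$ are $1$) and to every other $u_b$ (since the $u_b$'s form a clique), so $\deg_G(u_{1^k})=k+2^k-1$. Let $H$ be the (not necessarily induced) subgraph of $G$ obtained by keeping $u_{1^k}$ together with its $G$-neighbors and only the edges of $G$ incident to $u_{1^k}$. Then $H$ is connected and isomorphic to the star $K_{1,\,k+2^k-1}$. In $H$ every two leaves form a twin pair, so by Observation~\ref{obs_twin} any resolving set, any adjacency resolving set, and the support of any resolving broadcast of $H$ each omit at most one leaf. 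This forces $\dim(H)=\bdim(H)=\adim(H)=k+2^k-2$.

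Combining these bounds,
$$\frac{\dim(H)}{\dim(G)},\ \frac{\bdim(H)}{\bdim(G)},\ \frac{\adim(H)}{\adim(G)}\ \ge\ \frac{k+2^k-2}{k},$$
which tends to infinity with $k$; in particular each of $\dim(H)-\dim(G)$, $\bdim(H)-\bdim(G)$, and $\adim(H)-\adim(G)$ is unbounded. No real obstacle arises: the only things to check are that $H$ is a bona fide connected subgraph of $G$ (immediate) and that the three parameters of a star equal one less than its number of leaves (immediate from the twin-vertex observation together with Observation~\ref{obs_bdim}(a)); the construction from Theorem~\ref{O_sharp} then does all of the work.
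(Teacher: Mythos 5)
Your proof is correct, but it takes a genuinely different route from the paper's. The paper builds $G$ by taking $H=K_{k(k+1)/2}$ and attaching $k$ new vertices $u_1,\dots,u_k$ whose neighborhoods are designed so that $\{u_1,\dots,u_k\}$ resolves $G$; since $\diam(H)=1$ and $\diam(G)=2$, Observation~\ref{obs_bdim}(b) collapses all three parameters on both graphs, giving a ratio of about $k/2$ (i.e.\ $\Theta(\sqrt{|V(G)|})$). You instead recycle the graph of Theorem~\ref{O_sharp}, getting $\adim(G)\le k$ for free, and take $H$ to be the spanning star centered at $u_{1^k}$; the twin argument on the leaves pins down all three parameters of $H$ at $k+2^k-2$, yielding the quantitatively stronger ratio $(k+2^k-2)/k$, exponential in $\dim(G)$ and $\Theta(|V(G)|/\log|V(G)|)$. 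Both proofs rest on the same two pillars -- a diameter-$\le 2$ host so that $\dim=\bdim=\adim$, and a twin-heavy subgraph forcing the parameters of $H$ to be nearly $|V(H)|$ -- so the difference is in the construction, not the mechanism. The one point to flag is that your $H$ is a non-induced (spanning) subgraph, whereas the paper's $H$ is induced; the statement only asserts $H\subset G$, so this is admissible, but if an induced subgraph is preferred you could simply take $H$ to be the clique $K_{2^k}$ induced on $\{u_b\}_{b\in\{0,1\}^k}$, for which $\diam(H)=1$ gives $\dim(H)=\bdim(H)=\adim(H)=2^k-1$ directly and the same asymptotics.
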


\begin{proof}
For $k \ge 3$, let $H=K_{\frac{k(k+1)}{2}}$; let $V(H)$ be partitioned into $V_1, V_2, \ldots, V_k$ such that $V_i=\{w_{i,1}, w_{i,2}, \ldots, w_{i,i}\}$ with $|V_i|=i$, where $i \in \{1,2,\ldots,k\}$. Let $G$ be the graph obtained from $H$ and $k$ isolated vertices $u_1,u_2, \ldots, u_k$ as follows: $u_1$ is adjacent to $V_1 \cup (\cup_{j=2}^{k}\{w_{j,1}\})$, $u_2$ is adjacent to $V_2 \cup (\cup_{j=3}^{k}\{w_{j,2}\})$, $u_3$ is adjacent to $V_3 \cup (\cup_{j=4}^{k}\{w_{j,3}\})$, and so on, i.e., for each $i \in \{1,2,\ldots, k\}$, $u_i$ is adjacent to each vertex of $V_i \cup (\cup_{j=i+1}^{k}\{w_{j,i}\})$ (see the graph $G$ in Figure~\ref{fig_remark2} when $k=4$). Since $\diam(H)=1$ and $\diam(G)=2$, $\dim(H)=\bdim(H)=\adim(H)$ and $\dim(G)=\bdim(G)=\adim(G)$ by Observation~\ref{obs_bdim}(b). Note that $H\subset G$ and $\dim(H) =\frac{k(k+1)}{2}-1$ by Theorem~\ref{dim_characterization}(c). Since $\{u_1, u_2, \ldots, u_k\}$ forms a resolving set of $G$, $\dim(G) \le k$. So, $\frac{\dim(H)}{\dim(G)}=\frac{\bdim(H)}{\bdim(G)}=\frac{\adim(H)}{\adim(G)}\ge \frac{k^2+k-2}{2k} \rightarrow \infty$ as $k \rightarrow \infty$.~\hfill  
\end{proof}

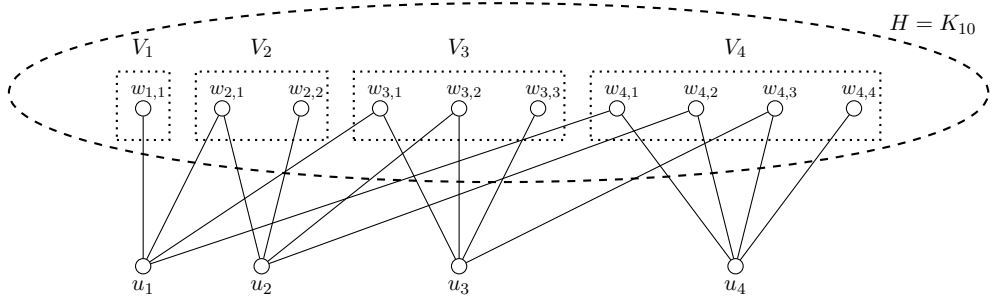
\begin{figure}[ht]
\centering
\begin{tikzpicture}[scale=.7, transform shape]

\node [draw, shape=circle, scale=.8] (0) at  (0, 0) {};
\node [draw, shape=circle, scale=.8] (1) at  (1.5, 0) {};
\node [draw, shape=circle, scale=.8] (2) at  (3, 0) {};
\node [draw, shape=circle, scale=.8] (3) at  (4.5, 0) {};
\node [draw, shape=circle, scale=.8] (4) at  (6, 0) {};
\node [draw, shape=circle, scale=.8] (5) at  (7.5, 0) {};
\node [draw, shape=circle, scale=.8] (6) at  (9, 0) {};
\node [draw, shape=circle, scale=.8] (7) at  (10.5, 0) {};
\node [draw, shape=circle, scale=.8] (8) at  (12, 0) {};
\node [draw, shape=circle, scale=.8] (9) at  (13.5, 0) {};

\node [draw, shape=circle, scale=.8] (a) at  (0, -3) {};
\node [draw, shape=circle, scale=.8] (b) at  (2.25, -3) {};
\node [draw, shape=circle, scale=.8] (c) at  (6, -3) {};
\node [draw, shape=circle, scale=.8] (d) at  (11.25, -3) {};

\draw[thick, dotted](-0.5,-0.6) rectangle (0.5,0.7);
\draw[thick, dotted](1,-0.6) rectangle (3.5,0.7);
\draw[thick, dotted](4,-0.6) rectangle (8,0.7);
\draw[thick, dotted](8.5,-0.6) rectangle (14,0.7);

\node [scale=1] at (0.1,0.3) {$w_{1,1}$};
\node [scale=1] at (1.6,0.3) {$w_{2,1}$};
\node [scale=1] at (3.1,0.3) {$w_{2,2}$};
\node [scale=1] at (4.6,0.3) {$w_{3,1}$};
\node [scale=1] at (6.1,0.3) {$w_{3,2}$};
\node [scale=1] at (7.6,0.3) {$w_{3,3}$};
\node [scale=1] at (9.1,0.3) {$w_{4,1}$};
\node [scale=1] at (10.6,0.3) {$w_{4,2}$};
\node [scale=1] at (12.1,0.3) {$w_{4,3}$};
\node [scale=1] at (13.6,0.3) {$w_{4,4}$};

\node [scale=1.1] at (0,-3.4) {$u_{1}$};
\node [scale=1.1] at (2.25,-3.4) {$u_{2}$};
\node [scale=1.1] at (6,-3.4) {$u_{3}$};
\node [scale=1.1] at (11.25,-3.4) {$u_{4}$};

\node [scale=1.1] at (0,1.15) {$V_1$};
\node [scale=1.1] at (2.25,1.15) {$V_2$};
\node [scale=1.1] at (6,1.15) {$V_3$};
\node [scale=1.1] at (11.25,1.15) {$V_4$};
\node [scale=1.1] at (15,1.6) {$H=K_{10}$};

\draw[thick,dashed] (6.75,0.3) ellipse (9.3cm and 1.7cm);

\draw(0)--(a)--(1);\draw(3)--(a)--(6);\draw(1)--(b)--(2);\draw(4)--(b)--(7);\draw(3)--(c)--(4);\draw(5)--(c)--(8);\draw(6)--(d)--(7);\draw(8)--(d)--(9);
\end{tikzpicture}
\caption{\small A graph $G$ such that $H \subset G$ and $\frac{\dim(H)}{\dim(G)}=\frac{\bdim(H)}{\bdim(G)}=\frac{\adim(H)}{\adim(G)}$ can be arbitrarily large; here, $k=4$ and $H=K_{10}$ for the example described in Theorem~\ref{remark_difference}.}\label{fig_remark2}
\end{figure}

Next we find all trees $T$ for which $\dim(T)=\bdim(T)$. First we recall some terminology. Fix a tree $T$. An end vertex $\ell$ is called a \emph{terminal vertex} of a major vertex $v$ if $d(\ell, v)<d (\ell, w)$ for every other major vertex $w$ in $T$. The \emph{terminal degree}, $ter(v)$, of a major vertex $v$ is the number of terminal vertices of $v$ in $T$, and an \emph{exterior major vertex} is a major vertex that has positive terminal degree. We denote by $ex(T)$ the number of exterior major vertices of $T$, and $\sigma(T)$ the number of end vertices of $T$. 

\begin{theorem}\emph{\cite{tree1, tree2, tree3}}\label{dim_tree}
For a tree $T$ that is not a path, $\dim(T)=\sigma(T)-ex(T)$.
\end{theorem}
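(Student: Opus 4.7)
The plan is to prove matching bounds $\dim(T) \le \sigma(T) - ex(T)$ and $\dim(T) \ge \sigma(T) - ex(T)$ by an explicit resolving set construction and a twin-style lower bound, respectively. Since $T$ is not a path, every end vertex of $T$ is a terminal vertex of a unique exterior major vertex, so $\sum_{v \in M(T)} ter(v) = \sigma(T)$, where $M(T)$ denotes the set of exterior major vertices. For each $v \in M(T)$, I would call a \emph{leg} of $v$ any of the $ter(v)$ paths from $v$ (excluding $v$ itself) to a terminal end vertex of $v$. Internal leg vertices have degree $2$ in $T$, which implies (via uniqueness of paths in a tree) that the legs of distinct exterior major vertices are vertex-disjoint; this underpins the additive counting in both bounds.

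For the lower bound, let $R$ be any resolving set of $T$ and fix $v \in M(T)$ with $ter(v) = k \ge 2$. I claim that $R$ contains at least $k-1$ vertices in the union of legs of $v$. Otherwise, two legs $L_i, L_j$ of $v$ would each avoid $R$; letting $d = \min(|L_i|, |L_j|)$ and taking $a \in L_i$, $b \in L_j$ at distance $d$ from $v$, one verifies that $d(r, a) = d(r, v) + d = d(r, b)$ for every $r \in V(T) \setminus (L_i \cup L_j)$, contradicting that $R$ resolves $\{a,b\}$ since $R \subseteq V(T) \setminus (L_i \cup L_j)$. Summing over $M(T)$ and using leg-disjointness yields $|R| \ge \sum_{v \in M(T)} (ter(v) - 1) = \sigma(T) - ex(T)$.

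For the upper bound, I would construct a resolving set $S$ by choosing, for each $v \in M(T)$, exactly $ter(v) - 1$ terminal end vertices of $v$, giving $|S| = \sigma(T) - ex(T)$. The key identity is that whenever $\ell \in S$ is a terminal of $v$ on leg $L$ and $u \in V(T) \setminus L$, then $d(\ell, u) = d(\ell, v) + d(v, u)$. Splitting two distinct vertices $x, y$ into cases by their positions (same leg of a common $v$, different legs of a common $v$, or two legs of different exterior major vertices) and using the identity, one locates a vertex of $S$ distinguishing $x$ and $y$. For instance, if $x, y$ lie on the single unchosen leg of some $v \in M(T)$ with $ter(v) \ge 2$, a chosen terminal $\ell'$ on another leg of $v$ gives $d(\ell', x) - d(\ell', y) = d(v, x) - d(v, y) \ne 0$ since $x \ne y$ on a path.

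The main obstacle will be the upper bound when $x, y$ lie on legs of distinct exterior major vertices $v_x, v_y$ both having $ter = 1$, so that neither leg contains an element of $S$; one must then choose which terminal to exclude at each $v \in M(T)$ with $ter(v) \ge 2$ carefully, leveraging asymmetry of $T$ to ensure that some chosen $\ell'$ satisfies $d(\ell', v_x) + d(v_x, x) \ne d(\ell', v_y) + d(v_y, y)$. The lower-bound argument, by contrast, is quite clean once one notices the twin-like symmetry of equidistant vertices on two unresolved legs of a common exterior major vertex.
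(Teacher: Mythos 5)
This theorem is quoted in the paper from \cite{tree1, tree2, tree3} without proof, so I am judging your argument against the standard proofs in those references. Your lower bound is complete and correct, and it is essentially the standard argument: if two legs of an exterior major vertex $v$ both avoid the resolving set $R$, the two vertices at distance $\min(|L_i|,|L_j|)$ from $v$ on those legs are unresolved because every $r\notin L_i\cup L_j$ satisfies $d(r,a)=d(r,v)+d=d(r,b)$; disjointness of legs then gives $|R|\ge\sum_v(ter(v)-1)=\sigma(T)-ex(T)$.

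The upper bound, however, has a genuine gap --- one you partly flag yourself. Two problems. First, your case analysis only covers pairs $x,y$ that both lie on legs, but a tree that is not a path typically has vertices on no leg at all: exterior major vertices themselves, major vertices of terminal degree $0$, and degree-two vertices on paths joining major vertices. Pairs involving such vertices (or one leg vertex and one non-leg vertex) must also be resolved, and your identity $d(\ell,u)=d(\ell,v)+d(v,u)$ alone does not dispatch them. Second, the case you call ``the main obstacle'' is left entirely unresolved, and your proposed fix --- choosing which terminal to omit at each exterior major vertex ``carefully, leveraging asymmetry'' --- points in the wrong direction: by the Poisson--Zhang characterization (Theorem~\ref{zhang_tree} in this paper), \emph{every} choice of one omitted leg per exterior major vertex yields a minimum resolving set, so no clever selection is available or needed. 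What is needed is an actual argument that a pair $x,y$ on the two omitted legs (or more generally any pair with identical codes) cannot exist. A clean way to supply it: if no $\ell\in S$ resolves $x$ and $y$, then $d(x,y)$ is even and every $\ell\in S$ projects onto the midpoint $m$ of the $x$--$y$ path; hence the branch of $T$ at $m$ containing $x$ contains no vertex of $S$, yet it contains the leaf at the end of $x$'s leg, and one checks that such a branch must contain at least two leaves of $T$ attached at a common exterior major vertex inside the branch, forcing a vertex of $S$ into the branch --- a contradiction. Until some argument of this kind is written out (and the non-leg vertices are handled), the inequality $\dim(T)\le\sigma(T)-ex(T)$ is not established.
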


\begin{theorem}\emph{\cite{tree3}}\label{zhang_tree}
Let $T$ be a tree with $ex(T)=k \ge 1$, and let $v_1, v_2, \ldots, v_k$ be the exterior major vertices of $T$. For each $i$ ($1 \le i \le k$), let $\ell_{i,1}, \ell_{i,2}, \ldots, \ell_{i, \sigma_i}$ be the terminal vertices of $v_i$ with $ter(v_i)=\sigma_i \ge 1$, and let $P_{i,j}$ be the $v_i-\ell_{i,j}$ path, where $1 \le j \le \sigma_i$. Let $W \subseteq V(T)$. Then $W$ is a minimum resolving set of $T$ if and only if $W$ contains exactly one vertex from each of the paths $P_{i,j}-v_i$ ($1 \le j \le \sigma_i$ and $1 \le i\le k$) with exactly one exception for each $i$ with $1 \le i \le k$ and $W$ contains no other vertices of $T$.
\end{theorem}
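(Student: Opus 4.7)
The plan rests on the fact that Theorem~\ref{dim_tree} already gives $\dim(T) = \sigma(T) - ex(T)$, and any $W$ of the form described in the statement has cardinality $\sum_{i=1}^{k}(\sigma_i - 1) = \sigma(T) - ex(T)$, which matches $\dim(T)$. So the plan is to prove two things: (i) every minimum resolving set has this precise form; (ii) every $W$ of this form is actually a resolving set (hence minimum). Both directions hinge on the same structural observation about equidistant twins in a tree.

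For direction (i), I would fix an exterior major vertex $v_i$ and observe that for any two of its terminal paths $P_{i,j_1}$ and $P_{i,j_2}$, every pair of vertices $u \in P_{i,j_1} - v_i$ and $u' \in P_{i,j_2} - v_i$ with $d(u,v_i) = d(u',v_i)$ is indistinguishable from every vertex outside $(P_{i,j_1}\cup P_{i,j_2})-\{v_i\}$, because any geodesic from $u$ or $u'$ to such a vertex must pass through $v_i$. Hence any resolving set must contain a vertex inside $(P_{i,j_1}\cup P_{i,j_2})-\{v_i\}$ for every such pair of paths. A pairwise covering argument then forces $|W \cap \bigcup_{j}(P_{i,j}-v_i)| \geq \sigma_i - 1$, and since these unions are pairwise disjoint across $i$, summing yields $|W| \geq \sigma(T) - ex(T)$. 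Equality (which holds since $W$ is minimum) forces all slack to vanish: exactly $\sigma_i - 1$ landmarks per branch system, no landmarks outside any branch system, and — applying the twin argument to any two paths at $v_i$ both lacking a landmark — at most one "empty" path per $v_i$. This pins down the structure.

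For direction (ii), I would show that any $W$ of the prescribed structure resolves every pair $x, y \in V(T)$. The key computation is that for a landmark $w_{i,j} \in W$ placed on $P_{i,j}$, one has $d(x, w_{i,j}) = d(x, v_i) + d(v_i, w_{i,j})$ whenever $x \notin P_{i,j}$ and strictly less otherwise. Consequently the landmarks attached to $v_i$ collectively determine $d(x, v_i)$ for every $x$ and also reveal exactly which path at $v_i$ the vertex $x$ lies on (possibly the exception path). Pairs on the same branch are then distinguished by a landmark on a different branch at the same $v_i$; pairs on different branches at the same $v_i$ are distinguished by the landmark on the non-exception branch that one of them occupies; and pairs attached to different exterior major vertices, or lying on the internal skeleton of $T$, are separated because the vectors $(d(\cdot, v_1), \ldots, d(\cdot, v_k))$ together with their branch locations must differ.

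The main obstacle I anticipate is making the sufficiency case analysis clean when $\sigma_i = 1$ for some $i$ (so that $v_i$ contributes no landmarks at all) and when both $x$ and $y$ lie on the internal skeleton far from every terminal branch. Handling these cases cleanly requires careful use of the uniqueness of geodesics in a tree, and is most easily organized either by induction on $ex(T)$ — pruning one terminal branch and applying the induction hypothesis to the smaller tree — or by an explicit partition of $V(T)$ into the branch systems and the internal skeleton, arguing that the collected information $\{d(x,v_i)\}_i$ together with the branch-location data forms an injective invariant. Once this bookkeeping is in place, the necessity and sufficiency arguments combine to yield the claimed "if and only if" characterization.
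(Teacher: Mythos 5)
The paper does not prove this statement: it is quoted from Poisson and Zhang \cite{tree3} and used as a black box, so there is no in-paper argument to compare yours against. Judged on its own, your necessity direction is correct and essentially complete: each $P_{i,j}-v_i$ is a full component of $T-v_i$ (a terminal path contains no major vertex other than $v_i$, so its internal vertices have degree $2$), hence for any $z$ outside $(P_{i,j_1}\cup P_{i,j_2})-\{v_i\}$ the geodesics from equidistant vertices of $P_{i,j_1}-v_i$ and $P_{i,j_2}-v_i$ to $z$ both pass through $v_i$ and have equal length. This forces at most one landmark-free terminal path per $v_i$, and together with $\dim(T)=\sigma(T)-ex(T)$, the disjointness of the branch systems, and the pigeonhole step ruling out two landmarks on one path, it pins down the structure exactly as you describe.

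The gap is in sufficiency. Your key mechanism --- that ``the landmarks attached to $v_i$ collectively determine $d(x,v_i)$ for every $x$ and also reveal exactly which path at $v_i$ the vertex $x$ lies on'' --- is false as a statement about the landmarks at $v_i$ alone. For instance, if $\sigma_i=2$ and the unique landmark $w$ of $v_i$ sits on $P_{i,1}$ at distance $t\ge 1$ from $v_i$, then $v_i$ itself and the vertex of $P_{i,1}$ at distance $2t$ from $v_i$ (when that path is long enough) both satisfy $d(\cdot,w)=t$, yet they have different distances to $v_i$ and lie in different branches. Such pairs are in fact resolved, but only by landmarks attached to \emph{other} exterior major vertices, so the argument cannot be organized one $v_i$ at a time as your sketch suggests. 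The standard repair is global: assume $d(x,w)=d(y,w)$ for all $w\in W$, project $x$ and $y$ onto a path joining two landmarked exterior major vertices to conclude they attach to that path at a common vertex $p$ at equal depth, and then show that the branch of $T-p$ containing $x$ either contains an exterior major vertex $v_a$ of terminal degree at least $2$ --- in which case $d(x,v_a)<d(y,v_a)$ and a landmark on a terminal path of $v_a$ separates the pair --- or is itself a terminal path of $p$, in which case the non-exception path at $p$ carries a landmark that does the job. You correctly flag this bookkeeping (including the $\sigma_i=1$ case) as the hard part, but the plan defers it rather than resolving it, and the one concrete mechanism you offer for it does not hold.
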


\begin{proposition}\label{tree_bdim}
Let $T$ be a non-trivial tree. Then $\dim(T)=\bdim(T)$ if and only if $T\in\{P_2, P_3\}$ or $T$ is a tree obtained from the star $K_{1,x}$ ($x \ge 3$) by subdividing at most $x-1$ edges exactly once.
\end{proposition}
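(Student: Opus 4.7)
The plan is to first reduce the equality $\bdim(T) = \dim(T)$ to $\adim(T) = \dim(T)$, and then characterize the trees satisfying the latter. For the reduction, let $f$ be an optimal resolving broadcast and set $S = \supp_T(f)$. Since $d_{f(z)}(x,z) \neq d_{f(z)}(y,z)$ implies $d(x,z) \neq d(y,z)$, the set $S$ is a (metric) resolving set, so $|S| \geq \dim(T)$. Combined with $\bdim(T) = \sum_{w \in S} f(w) \geq |S|$ and the assumption $\bdim(T) = \dim(T)$, equality forces $|S| = \dim(T)$ and $f(w) = 1$ for every $w \in S$. The resolving-broadcast condition on this indicator-type $f$ is precisely the adjacency-resolving condition, so $S$ is an adjacency resolving set of size $\dim(T)$, giving $\adim(T) = \dim(T)$. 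The converse direction is immediate from Observation~\ref{obs_bdim}(a). Thus it suffices to characterize trees $T$ with $\adim(T) = \dim(T)$.

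For the path case, Proposition~\ref{adim_pc} yields $\adim(P_n) = \lfloor (2n+2)/5 \rfloor \geq 2 > 1 = \dim(P_n)$ for $n \geq 4$, while $P_2$ and $P_3$ trivially satisfy $\dim = \adim = 1$. So among paths only $P_2$ and $P_3$ qualify.

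Now let $T$ be a non-path tree with $k = ex(T) \geq 1$, so $\dim(T) = \sigma(T) - k$ by Theorem~\ref{dim_tree}. For the \emph{if} direction (a subdivided star with $x \geq 3$ legs of lengths in $\{1,2\}$ and at least one leg of length $1$), I would exhibit the adjacency resolving set of size $x - 1$ formed by the internal vertex of each length-$2$ leg together with the leaves of all but one length-$1$ leg; a direct check shows that the center receives the all-$1$ code, the excluded length-$1$ leaf receives the all-$2$ code, each member of the set contributes a unique $0$-coordinate, and each length-$2$ leaf receives a unique $1$-coordinate, so the codes are pairwise distinct. For the \emph{only if} direction, let $A$ be an adjacency resolving set with $|A| = \dim(T)$. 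Since adjacency resolution implies metric resolution, $A$ is a minimum resolving set, and Theorem~\ref{zhang_tree} forces $A$ to consist of exactly one vertex from each terminal path $P_{i,j} - v_i$, with one exception per exterior major vertex $v_i$. The excluded leaf on each excluded terminal path carries the all-$2$ adjacency code, because it lies outside $A$ and its unique neighbor (on the excluded path or equal to $v_i$) is also outside $A$. If $k \geq 2$, two excluded leaves would share this all-$2$ code, contradicting adjacency resolution; hence $k = 1$ and $T$ has a unique (exterior) major vertex $v$. Let $P_1$ be the sole excluded terminal path, of length $a_1$. Uniqueness of the all-$2$ code forces $V(T) \setminus N[A] = \{\ell_1\}$; but if $a_1 \geq 2$, the vertex $u_{1,1} \neq \ell_1$ has both its neighbors $v$ and $u_{1,2}$ outside $A$, giving a second uncovered vertex---a contradiction. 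So $a_1 = 1$. For each $j \geq 2$, the unique $z_j \in A \cap (P_j - v)$ must dominate all $a_j$ vertices of $P_j - v$, forcing $a_j \leq 3$. The residual case $a_j = 3$ is then ruled out by a short symmetry argument: domination of $P_j - v$ by a single vertex forces $z_j = u_{j,2}$, after which $u_{j,1}$ and $u_{j,3}$ share the code that has value $1$ in the $z_j$-coordinate and value $2$ in every other coordinate, again violating adjacency resolution. Hence every $a_j \leq 2$ with $a_1 = 1$, matching the stated structure.

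The main obstacle is the \emph{only if} analysis, which requires simultaneously using Theorem~\ref{zhang_tree} (to pin down the shape of every minimum resolving set) and the near-domination property $|V(T) \setminus N[A]| \leq 1$ implicit in adjacency resolution, and then closing the lingering case $a_j = 3$ via the symmetry argument on $u_{j,1}$ and $u_{j,3}$.
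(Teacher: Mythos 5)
Your proof is correct and follows essentially the same route as the paper's: both arguments reduce to the support of an optimal resolving broadcast being a minimum metric resolving set with all weights equal to $1$, invoke Theorem~\ref{zhang_tree} to pin down its structure, rule out $ex(T)\ge 2$ via duplicated all-$2$ codes on excluded terminal paths, and bound the leg lengths of the remaining spider by a domination-type argument. Your explicit up-front reduction of $\bdim(T)=\dim(T)$ to $\adim(T)=\dim(T)$ makes precise a step the paper leaves implicit, and your treatment of the leg lengths (excluded leg has length $1$; non-excluded legs have length at most $2$ via the $a_j=3$ symmetry argument) is a harmless variant of the paper's case analysis.
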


\begin{proof}
($\Leftarrow$) First, let $T\in\{P_2, P_3\}$, and let $\ell$ be an end vertex of $T$. Let $g$ be a function defined on $V(G)$ such that $g(\ell)=1$ and $g(v)=0$ for each $v\in V(T)-\{\ell\}$. Then $g$ is a resolving broadcast of $T$, and thus $\bdim(T)=1=\dim(T)$ by Observation~\ref{obs_bdim}(a) and Theorem~\ref{dim_characterization}(a). Second, let $T$ be a tree obtained from the star $K_{1,x}$ ($x \ge 3$) by subdividing at most $x-1$ edges exactly once. Let $w$ be the major vertex of $T$, and let $\ell_1, \ell_2, \ldots, \ell_x$ be the terminal vertices of $w$ in $T$ such that $d(w, \ell_1) \ge d(w, \ell_2) \ge \ldots \ge d(w, \ell_x)$; then $d(w, \ell_x)=1$. If $f:V(T) \rightarrow \mathbb{Z}^+ \cup \{0\}$ is a function defined by 
\begin{equation*}
f(v)=\left\{
\begin{array}{ll}
1 & \mbox{ if } v \in N(w)-\{\ell_x\}\\
0 & \mbox{ otherwise,}
\end{array}
\right.
\end{equation*}
then $f$ is a resolving broadcast of $T$, and thus $\bdim(T) \le x-1=\dim(T)$ by Theorem~\ref{dim_tree}. By Observation~\ref{obs_bdim}(a), $\bdim(T)=\dim(T)$.

($\Rightarrow$) Let $\dim(T)=\bdim(T)$. Let $f: V(T) \rightarrow \mathbb{Z}^+ \cup \{0\}$ be a resolving broadcast of $T$ with $c_f(T)=\dim(T)$, and let $R=\supp_T(f)$. First, let $ex(T)=0$, i.e., $T$ is a path; then $c_f(T)=1$ by Theorem~\ref{dim_characterization}(a). So, $b_R(u)\in\{0,1,2\}$ for each $u\in V(T)$. Thus, $T\in\{P_2, P_3\}$. Second, let $ex(T)=1$. Let $v$ be the exterior major vertex of $T$ with terminal vertices $\ell_1, \ell_2, \ldots, \ell_x$ such that $d(v, \ell_1) \ge d(v, \ell_2) \ge \ldots \ge d(v, \ell_x)$; then $x \ge 3$ and $\dim(T)=x-1$ by Theorem~\ref{dim_tree}. Further, let $N(v)=\cup_{i=1}^{x}\{s_i\}$ and let $s_i$ lie on the $v-\ell_i$ path, where $i\in\{1,2,\ldots, x\}$. If $d(v, \ell_x) \ge 2$, then there exists $j\in \{1,2,\ldots, x\}$ such that $d(v, \ell_j) \ge 2$ and $b_R(s_j)=b_R(\ell_j)$, contradicting the assumption that $f$ is a resolving broadcast of $T$. So, $d(v, \ell_x)=1$ and $b_R(\ell_x)={\bf 2}_{x-1}$. If $d(v, \ell_1)=d \ge 3$, say the $v-\ell_1$ path is given by $v, s_1, s_2, \ldots, s_d=\ell_1$, then (i) $b_R(\ell_1)=b_R(\ell_x)$ if $f(s_1)=1$; (ii) $b_R(s_{i-1})=b_R(s_{i+1})$ if $f(s_i)=1$ for some $i\in\{2,3,\ldots, d-1\}$; (iii) $b_R(s_{1})=b_R(\ell_x)$ if $f(\ell_1)=1$. So, $d(v,\ell_1) \le 2$. Next, let $ex(T) \ge 2$; we show that $\bdim(T)>\dim(T)$. Let $v_1, v_2, \ldots, v_a$ be distinct exterior major vertices of $T$, where $a\ge 2$. Let $\ell_1, \ell_2, \ldots, \ell_{\alpha}$ be the set of terminal vertices of $v_1$, and let $\ell'_1, \ell'_2, \ldots, \ell'_{\beta}$ be the set of terminal vertices of $v_2$ in $T$; let $P^{1,i}$ be the $v_1-\ell_i$ path excluding $v_1$, and let $P^{2,j}$ be the $v_2-\ell'_j$ path excluding $v_2$ in $T$. By Theorem~\ref{zhang_tree}, $c_f(P^{1,x})=c_f(P^{2,y})=0$ for some $x\in\{1,2,\ldots, \alpha\}$ and $y\in\{1,2\ldots, \beta\}$. Since $b_R(\ell_{1,x})=b_R(\ell'_{2,y})={\bf 2}_{|R|}$, $f$ fails to be a resolving broadcast of $T$, and thus $\bdim(T)>\dim(T)$.
\end{proof}

Proposition~\ref{tree_bdim} implies the following corollary.

\begin{corollary}\label{tree_b_cor}
For any non-trivial tree $T$, $\dim(T)=\adim(T)$ if and only if $T\in\{P_2, P_3\}$ or $T$ is a tree obtained from the star $K_{1,x}$ ($x \ge 3$) by subdividing at most $x-1$ edges exactly once.
\end{corollary}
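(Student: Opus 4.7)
The plan is to deduce the corollary directly from Proposition~\ref{tree_bdim} together with the sandwich inequality $\dim(T)\le\bdim(T)\le\adim(T)$ from Observation~\ref{obs_bdim}(a).

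For the ($\Rightarrow$) direction, suppose $T$ is a non-trivial tree with $\dim(T)=\adim(T)$. Then
\[
\dim(T)\le\bdim(T)\le\adim(T)=\dim(T),
\]
so $\dim(T)=\bdim(T)$. Proposition~\ref{tree_bdim} then forces $T\in\{P_2,P_3\}$ or $T$ to be obtained from some star $K_{1,x}$ ($x\ge 3$) by subdividing at most $x-1$ edges exactly once.

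For the ($\Leftarrow$) direction, I plan to re-examine the resolving broadcasts $g$ and $f$ explicitly constructed in the ($\Leftarrow$) part of the proof of Proposition~\ref{tree_bdim}. In each case the broadcast takes values only in $\{0,1\}$: for $T\in\{P_2,P_3\}$ the function $g$ assigns $1$ to an end vertex and $0$ elsewhere, while for the subdivided-star case $f$ assigns $1$ to each vertex of $N(w)-\{\ell_x\}$ and $0$ elsewhere. Since a $\{0,1\}$-valued resolving broadcast is exactly the indicator function of an adjacency resolving set, $\supp_T(g)$ and $\supp_T(f)$ are adjacency resolving sets of size $1$ and $x-1$ respectively. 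Combined with Theorem~\ref{dim_characterization}(a) and Theorem~\ref{dim_tree}, this gives $\adim(T)\le\dim(T)$, and then Observation~\ref{obs_bdim}(a) yields $\adim(T)=\dim(T)$.

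There is essentially no obstacle here: all of the heavy lifting has already been done in Proposition~\ref{tree_bdim}, and the only thing to notice is that the broadcasts constructed there happen to be $\{0,1\}$-valued, which is exactly the hypothesis needed to promote the equality $\dim(T)=\bdim(T)$ to $\dim(T)=\adim(T)$. The mildly subtle step is recognizing that $\bdim$ and $\adim$ coincide on this particular family even though they differ in general, and this is forced once the squeeze $\dim(T)\le\bdim(T)\le\adim(T)\le\dim(T)$ is set up.
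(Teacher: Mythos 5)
Your proof is correct and follows exactly the route the paper intends (the paper states only that Proposition~\ref{tree_bdim} ``implies'' the corollary, leaving the details implicit). You correctly identify the one point that genuinely needs checking---that the ($\Leftarrow$) direction requires $\adim(T)\le\dim(T)$, which does not follow from $\dim(T)=\bdim(T)$ alone but is supplied by the fact that the broadcasts constructed in the proof of Proposition~\ref{tree_bdim} are $\{0,1\}$-valued and hence are indicator functions of adjacency resolving sets.
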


\section{The effect of vertex or edge deletion on the adjacency dimension and the broadcast dimension of graphs}\label{s:delete}

Throughout this section, let $v$ and $e$, respectively, denote a vertex and an edge of a connected graph $G$ such that both $G-v$ and $G-e$ are connected graphs. First, we consider the effect of vertex deletion on adjacency dimension and broadcast dimension. It is known that $\dim(G)-\dim(G-v)$ and $\dim(G-v)-\dim(G)$, respectively, can be arbitrarily large; see~\cite{wheel1} and~\cite{joc}, respectively. We show that $\frac{\bdim(G)}{\bdim(G-v)}$ can be arbitrarily large, whereas $\adim(G) \le \adim(G-v)+1$. We also show that $\bdim(G-v) - \bdim(G)$ and $\adim(G-v)-\adim(G)$ can be arbitrarily large.

We recall the following useful result.

\begin{proposition}\emph{\cite{k_adjacency_dim}}\label{adim_uni}
Let $H$ be a graph of order $n \geq 2$. Then $\adim(K_1 + H) \geq \adim(H)$. 
\end{proposition}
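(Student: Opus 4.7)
The plan is to compare adjacency codes in $K_1+H$ and in $H$ directly. Let $w$ denote the unique vertex of the $K_1$ factor, so in $K_1+H$ the vertex $w$ is joined to every vertex of $H$. The key observation is that for any two distinct $v_1,v_2\in V(H)$, the adjacency distance $d_1(v_1,v_2)$ computed in $K_1+H$ equals the one computed in $H$: if $v_1v_2\in E(H)$ then both equal $1$, while if $v_1v_2\notin E(H)$ then both equal $2$, in $K_1+H$ via the path $v_1\,w\,v_2$. Hence, when restricted to $V(H)\times V(H)$, the adjacency distance functions of the two graphs coincide.

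Given this, I would take a minimum adjacency resolving set $A$ of $K_1+H$, so that $|A|=\adim(K_1+H)$, and split into two cases according to whether $w\in A$.

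If $w\notin A$, then $A\subseteq V(H)$, and by the key observation the adjacency code $a_A(v)$ of any $v\in V(H)$ is the same in $K_1+H$ as in $H$. Since $A$ resolves all distinct pairs in $V(K_1+H)$, in particular it resolves every pair of distinct vertices in $V(H)$ in $K_1+H$, and hence also in $H$. Therefore $A$ is an adjacency resolving set of $H$ and $\adim(H)\le |A|=\adim(K_1+H)$. If instead $w\in A$, then since $w$ is adjacent to every vertex of $H$, the $w$-coordinate of $a_A(v)$ equals $1$ for every $v\in V(H)$; hence $w$ cannot separate any pair of vertices inside $V(H)$. Consequently $A-\{w\}$ must already resolve every pair of distinct vertices of $V(H)$ under the adjacency distance of $K_1+H$, and by the key observation $A-\{w\}$ is an adjacency resolving set of $H$, giving $\adim(H)\le |A|-1<\adim(K_1+H)$.

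The whole argument rests on the distance observation in the first paragraph, which is the only place where the join structure is used; there is essentially no obstacle beyond this. The case split then immediately yields $\adim(H)\le \adim(K_1+H)$ in both scenarios, as desired.
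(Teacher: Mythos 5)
Your argument is correct. Note that the paper does not prove this proposition at all --- it is quoted from the literature (the reference \cite{k_adjacency_dim} on adjacency dimension) --- so there is no internal proof to compare against; your write-up supplies a valid self-contained justification. The key observation that the adjacency distance $d_1$ restricted to $V(H)\times V(H)$ is unchanged by the join (adjacency is preserved, and non-adjacent pairs are at distance exactly $2$ in $K_1+H$ via the apex $w$, hence still have $d_1=2$) is exactly what is needed, and the case split on whether $w$ lies in a minimum adjacency resolving set of $K_1+H$ is handled correctly: when $w$ is in the set, its coordinate is $1$ for every vertex of $H$, so it resolves no pair inside $V(H)$ and can be discarded. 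One could phrase the whole thing in a single line --- any adjacency resolving set $A$ of $K_1+H$ yields the adjacency resolving set $A\setminus\{w\}\subseteq V(H)$ of $H$ --- but your two cases amount to the same thing.
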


\begin{remark}
The value of $\frac{\bdim(G)}{\bdim(G-v)}$ can be arbitrarily large, as $G$ varies.
\end{remark}

\begin{proof}
Let $G = (P_k \times P_k) + K_1$, and let $v$ be the vertex in the $K_1$. Then $\bdim(G-v) = \Theta(k)$ by Theorem \ref{comparison_3dim}, but $\bdim(G) = \adim(G) \geq \adim(G-v) = \Theta(k^2)$ by Proposition~\ref{adim_uni} and Theorem~\ref{comparison_3dim}.~\hfill
\end{proof}

\begin{proposition}
For any graph $G$, $\adim(G) \le \adim(G-v)+1$, where the bound is sharp.
\end{proposition}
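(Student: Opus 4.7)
The plan is to take a minimum adjacency resolving set $S$ of $G-v$ and argue that $S \cup \{v\}$ is an adjacency resolving set of $G$, which immediately gives $\adim(G) \le |S|+1 = \adim(G-v)+1$.

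The key observation driving the argument is that the adjacency distance $d_1(x,y) = \min\{d(x,y),2\}$ only depends on whether $x=y$, $x \sim y$, or $x \not\sim y$; it is insensitive to all longer distances. Consequently, for any $x,y \in V(G) - \{v\}$ and any $u \in V(G) - \{v\}$, the entry $d_1(x,u)$ computed in $G$ agrees with the entry computed in $G-v$. So if $S$ adjacency-resolves a pair $x,y$ in $G-v$, the same set $S$ adjacency-resolves that pair in $G$.

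It therefore suffices to handle pairs of the form $\{x,v\}$ for $x \in V(G)-\{v\}$. First I would note that $v \notin S$, since $S \subseteq V(G-v)$, so $v$ is a new coordinate in the code. Then the coordinate corresponding to $v$ in $a_{S \cup \{v\}}$ already separates $x$ from $v$: we have $d_1(v,v)=0$, while $d_1(x,v) \in \{1,2\}$ because $x \neq v$. This completes the verification that $S \cup \{v\}$ is an adjacency resolving set of $G$.

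For sharpness, I would exhibit the family $G = K_n$ with $n \ge 2$ and any vertex $v$: Theorem~\ref{adj_characterization}(b) gives $\adim(K_n) = n-1$ and $\adim(K_n - v) = \adim(K_{n-1}) = n-2$, so the bound is attained with equality. No real obstacle arises in this proof; the only subtlety is confirming that $d_1$ is genuinely a local (adjacency-only) quantity, so that the behavior at vertices other than $v$ is truly unchanged when $v$ is deleted, and that adding $v$ as a singleton coordinate is sufficient to resolve every pair involving $v$ at once.
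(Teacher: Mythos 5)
Your argument is correct and is essentially the paper's proof: both take a minimum adjacency resolving set $S$ of $G-v$, note that adjacency codes of vertices of $G-v$ with respect to $S$ are unchanged in $G$, and conclude that $S\cup\{v\}$ adjacency-resolves $G$ (you additionally spell out why the new coordinate at $v$ separates $v$ from every other vertex, which the paper leaves implicit), with the same sharpness family $K_n$. The only quibble is that you should take $n\ge 3$ rather than $n\ge 2$, since under the paper's conventions $\adim(K_1)=\adim(P_1)=1$, so the $n=2$ instance is not tight.
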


\begin{proof}
Let $S$ be a minimum adjacency resolving set of $G-v$. Note that, for any vertex $x$ in $G-v$, $a_S(x)$ in $G-v$ remains the same in $G$. So, $S\cup\{v\}$ forms an adjacency resolving set of $G$, and hence $\adim(G) \le |S|+1=\adim(G-v)+1$. For the sharpness of the bound, let $G=K_n$ for $n \ge 3$; then $\adim(G)=n-1$ and $\adim(G-v)=n-2$, for any $v\in V(G)$, by Theorem~\ref{adj_characterization}(b).~\hfill  
\end{proof}

\begin{remark}
The value of $\bdim(G-v) - \bdim(G)$ and $\adim(G-v)-\adim(G)$ can be arbitrarily large, as $G$ varies.
\end{remark}

\begin{proof}
Let $G$ be the graph in Figure~\ref{fig_vdeletion}, where $k \ge 2$. Note that $\diam(G)=\diam(G-v)=2$; thus, $\dim(G)=\bdim(G)=\adim(G)$ and $\dim(G-v)=\bdim(G-v)=\adim(G-v)$ by Observation~\ref{obs_bdim}(b).

First, we show that $\dim(G)=k+1$. Let $S$ be any minimum resolving set of $G$. Note that, for each $i\in\{1,2,\ldots, k\}$, $x_i$ and $z_i$ are twin vertices of $G$; thus $|S \cap \{x_i, z_i\}| \ge 1$ by Observation~\ref{obs_twin}(a). Without loss of generality, let $S'=\cup_{i=1}^{k}\{x_i\} \subseteq S$. Since $r_{S'}(y_i)=r_{S'}(z_i)$ for each $i \in \{1,2,\ldots, k\}$, $|S| \ge k+1$; thus $\dim(G) \ge k+1$. On the other hand, $S'\cup \{v\}$ forms a resolving set of $G$, and thus $\dim(G) \le k+1$. So, $\dim(G)=k+1$.

Second, we show that $\dim(G-v)=2k$. Let $R$ be any minimum resolving set of $G-v$. Note that, for each $i\in\{1,2,\ldots, k\}$, any two vertices in $\{x_i, y_i, z_i\}$ are twin vertices of $G-v$. By Observation~\ref{obs_twin}(a), $|R \cap\{x_i, y_i, z_i\}| \ge 2$ for each $i\in\{1,2,\ldots, k\}$; thus $|R| \ge 2k$. Since $\cup_{i=1}^{k}\{x_i, y_i\}$ forms a resolving set of $G-v$, $\dim(G-v) \le 2k$. Thus, $\dim(G-v)=2k$.

Therefore, $\dim(G-v) - \dim(G)=\bdim(G-v) - \bdim(G)=\adim(G-v)-\adim(G)=2k-(k+1)=k-1 \rightarrow \infty$ as $k \rightarrow \infty$.~\hfill
\end{proof}

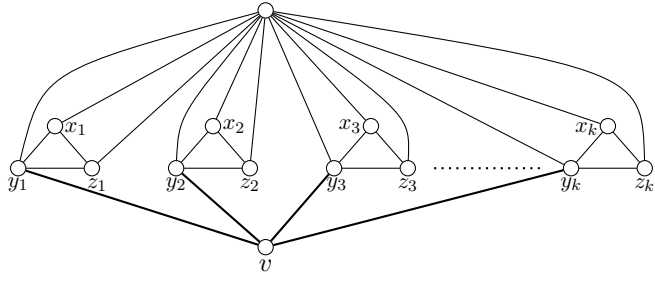
\begin{figure}[ht]
\centering
\begin{tikzpicture}[scale=.7, transform shape]

\node [draw, shape=circle, scale=.8] (0) at  (0, 3) {};
\node [draw, shape=circle, scale=.8] (1) at  (-4.7, 0) {};
\node [draw, shape=circle, scale=.8] (2) at  (-4, 0.8) {};
\node [draw, shape=circle, scale=.8] (3) at  (-3.3, 0) {};
\node [draw, shape=circle, scale=.8] (4) at  (-1.7, 0) {};
\node [draw, shape=circle, scale=.8] (5) at  (-1, 0.8) {};
\node [draw, shape=circle, scale=.8] (6) at  (-0.3, 0) {};
\node [draw, shape=circle, scale=.8] (7) at  (1.3, 0) {};
\node [draw, shape=circle, scale=.8] (8) at  (2, 0.8) {};
\node [draw, shape=circle, scale=.8] (9) at  (2.7, 0) {};
\node [draw, shape=circle, scale=.8] (10) at  (5.8, 0) {};
\node [draw, shape=circle, scale=.8] (11) at  (6.5, 0.8) {};
\node [draw, shape=circle, scale=.8] (12) at  (7.2, 0) {};
\node [draw, shape=circle, scale=.8] (v) at  (0, -1.5) {};

\node [scale=1.3] at (0,-1.85) {$v$};
\node [scale=1.1] at (-3.6,0.75) {$x_1$};
\node [scale=1.1] at (-4.7,-0.3) {$y_1$};
\node [scale=1.1] at (-3.2,-0.31) {$z_1$};
\node [scale=1.1] at (-0.6,0.8) {$x_2$};
\node [scale=1.1] at (-1.7,-0.3) {$y_2$};
\node [scale=1.1] at (-0.3,-0.31) {$z_2$};
\node [scale=1.1] at (1.6,0.8) {$x_3$};
\node [scale=1.1] at (1.35,-0.3) {$y_3$};
\node [scale=1.1] at (2.7,-0.31) {$z_3$};
\node [scale=1.1] at (6.1,0.75) {$x_k$};
\node [scale=1.1] at (5.8,-0.3) {$y_k$};
\node [scale=1.1] at (7.2,-0.31) {$z_k$};

\draw[thick,dotted] (3.2,0)--(5.3,0);

\draw(1)--(2)--(3)--(1);\draw(4)--(5)--(6)--(4);\draw(7)--(8)--(9)--(7);\draw(10)--(11)--(12)--(10);\draw(2)--(0);\draw(5)--(0);\draw(8)--(0);\draw(11)--(0);\draw(6)--(0);\draw(7)--(0);\draw(3)--(0);\draw(10)--(0);
\draw(1) .. controls(-4.25,1.85) .. (0);\draw(4) .. controls(-1.65,1) .. (0);\draw(9) .. controls(2.75,1) .. (0);\draw(12) .. controls(7.15,1.85) .. (0);

\draw[thick](v)--(1);\draw[thick](v)--(4);\draw[thick](v)--(7);\draw[thick](v)--(10);

\end{tikzpicture}
\caption{\small A graph $G$ such that $\dim(G-v) - \dim(G)=\bdim(G-v) - \bdim(G)=\adim(G-v)-\adim(G)$ can be arbitrarily large, where $k \ge 2$.}\label{fig_vdeletion}
\end{figure}

Next, we consider the effect of edge deletion. We recall the following result on the effect of edge deletion on metric dimension.

\begin{theorem}\emph{\cite{joc}}
\begin{itemize}
\item[(a)] For any graph $G$ and any edge $e \in E(G)$, $\dim(G-e) \le \dim(G)+2$. 
\item[(b)] The value of $\dim(G) - \dim(G - e)$ can be arbitrarily large (see Figure~\ref{fig_dim_edge}). 
\end{itemize}
\end{theorem}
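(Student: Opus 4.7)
For part (a), the plan is to show that if $S$ is a minimum resolving set of $G$ and $e = uv$, then $S \cup \{u, v\}$ is a resolving set of $G - e$; this immediately gives $\dim(G-e) \le |S|+2 = \dim(G)+2$. Let $d$ and $d'$ denote distances in $G$ and in $G-e$ respectively. The key distance identity is that for any vertices $a,s$, any shortest $a$--$s$ path in $G$ either avoids $e$ entirely or traverses $e$ exactly once (as $u \to v$ or $v \to u$), so
\[
d(a,s) \;=\; \min\bigl\{d'(a,s),\ d'(a,u)+1+d'(v,s),\ d'(a,v)+1+d'(u,s)\bigr\}.
\]
Specializing to $s \in \{u,v\}$ yields the simpler facts $d(a,u) = \min\{d'(a,u),\, d'(a,v)+1\}$ and $d(a,v) = \min\{d'(a,v),\, d'(a,u)+1\}$.

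Given distinct $x,y \in V(G-e)$, first suppose $u$ or $v$ resolves $\{x,y\}$ in $G-e$; then we are done. Otherwise $d'(x,u)=d'(y,u)$ and $d'(x,v)=d'(y,v)$, and by the specialized identities $d(x,u)=d(y,u)$ and $d(x,v)=d(y,v)$. Choose $s \in S$ with $d(x,s)\ne d(y,s)$. If one had $d'(x,s)=d'(y,s)$, then all three arguments of the min in the displayed identity would coincide for $a=x$ and $a=y$, forcing $d(x,s)=d(y,s)$, a contradiction. Hence $s$ resolves $\{x,y\}$ in $G-e$, which completes (a).

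For part (b), the plan is to exhibit a family of graphs $G_k$ with a distinguished edge $e_k$ such that $\dim(G_k)-\dim(G_k-e_k)\to\infty$, as realized by the construction in Figure~\ref{fig_dim_edge}. The design strategy is to arrange $\Omega(k)$ twin pairs in $G_k$ whose existence is contingent on the edge $e_k$, so that Observation~\ref{obs_twin}(a) forces $\dim(G_k) = \Omega(k)$, while in $G_k-e_k$ those twin relations are destroyed and a small, explicit set (of size independent of $k$, or at least much smaller than $\dim(G_k)$) resolves the graph. The verification has two independent parts: a lower bound on $\dim(G_k)$ obtained by listing the twin pairs and applying Observation~\ref{obs_twin}(a), and an upper bound on $\dim(G_k-e_k)$ obtained by writing down a concrete resolving set and checking its metric codes are distinct.

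The main obstacle in (a) is isolating the right distance identity; once the $\min$ formula above is established, the case analysis collapses into a one-line contradiction. The main obstacle in (b) is choosing $G_k$ and $e_k$ so that deleting $e_k$ genuinely breaks all the twin obstructions without silently creating new ones elsewhere in the graph; this is a design issue rather than a proof issue, and is handled by selecting a construction (such as the one in Figure~\ref{fig_dim_edge}) where the twin structure is concentrated at the endpoints of a single edge.
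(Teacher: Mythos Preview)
The paper does not supply its own proof of this theorem; it is quoted from \cite{joc} and only the statement and Figure~\ref{fig_dim_edge} are reproduced. So there is nothing to compare against, and your proposal must be judged on its own.

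Your argument for part (a) is correct and clean. The identity
\[
d(a,s)=\min\bigl\{d'(a,s),\ d'(a,u)+1+d'(v,s),\ d'(a,v)+1+d'(u,s)\bigr\}
\]
is exactly the right tool, and the way you reduce to the case $d'(x,u)=d'(y,u)$, $d'(x,v)=d'(y,v)$ and then force $d(x,s)=d(y,s)$ is a valid contradiction. This establishes $\dim(G-e)\le\dim(G)+2$.

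Your plan for part (b), however, has a genuine gap in the mechanism you propose. You suggest arranging $\Omega(k)$ twin pairs in $G_k$ ``whose existence is contingent on the edge $e_k$'' and then invoking Observation~\ref{obs_twin}(a). But twinness is a purely local condition on neighborhoods: if $a,b$ are twins in $G_k$ and neither of $a,b$ is an endpoint of $e_k$, then $N(a)\setminus\{b\}$ and $N(b)\setminus\{a\}$ are unchanged in $G_k-e_k$, so $a,b$ remain twins there. Deleting a single edge can therefore destroy at most the twin relations involving its two endpoints, not $\Omega(k)$ disjoint twin pairs. Conversely, adding a single edge cannot \emph{create} $\Omega(k)$ new twin pairs either. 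So Observation~\ref{obs_twin}(a) alone cannot supply a lower bound on $\dim(G_k)$ that collapses after removing one edge.

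The construction in Figure~\ref{fig_dim_edge} works by a different, distance-based mechanism: the extra edge $e$ is a shortcut between the two ``hub'' vertices, and its presence equalizes the distance profiles (with respect to any small landmark set) of many internal vertices in each of the $k$ gadgets, forcing a landmark inside every gadget; once $e$ is removed the two hubs sit at opposite ends and their distances separate those internal vertices, so a bounded number of landmarks suffices. To carry out (b) you need to replace the twin argument with an explicit count of this kind: show that in $G_k$ any resolving set must meet each gadget (giving $\dim(G_k)\ge k$, say), and then exhibit a resolving set of bounded size in $G_k-e$ by computing metric codes.
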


\begin{figure}[ht]
\centering
\begin{tikzpicture}[scale=.7, transform shape]

\node [draw, shape=circle, scale=.8] (0) at  (0, -1.5) {};
\node [draw, shape=circle, scale=.8] (1) at  (1, 0) {};
\node [draw, shape=circle, scale=.8] (2) at  (2.1, 0.4) {};
\node [draw, shape=circle, scale=.8] (3) at  (3.3, 0.4) {};
\node [draw, shape=circle, scale=.8] (4) at  (2, -0.4) {};
\node [draw, shape=circle, scale=.8] (5) at  (2.7, -0.4) {};
\node [draw, shape=circle, scale=.8] (6) at  (3.4, -0.4) {};
\node [draw, shape=circle, scale=.8] (7) at  (4.5, 0) {};
\node [draw, shape=circle, scale=.8] (8) at  (5.5, 0) {};
\node [draw, shape=circle, scale=.8] (9) at  (6.5, 0.4) {};
\node [draw, shape=circle, scale=.8] (10) at  (6.5, -0.4) {};
\node [draw, shape=circle, scale=.8] (11) at  (7.5, -1.5) {};

\node [draw, shape=circle, scale=.8] (a1) at  (1, -1.5) {};
\node [draw, shape=circle, scale=.8] (a2) at  (2.1, -1.1) {};
\node [draw, shape=circle, scale=.8] (a3) at  (3.3, -1.1) {};
\node [draw, shape=circle, scale=.8] (a4) at  (2, -1.9) {};
\node [draw, shape=circle, scale=.8] (a5) at  (2.7, -1.9) {};
\node [draw, shape=circle, scale=.8] (a6) at  (3.4, -1.9) {};
\node [draw, shape=circle, scale=.8] (a7) at  (4.5, -1.5) {};
\node [draw, shape=circle, scale=.8] (a8) at  (5.5, -1.5) {};
\node [draw, shape=circle, scale=.8] (a9) at  (6.5, -1.1) {};
\node [draw, shape=circle, scale=.8] (a10) at  (6.5, -1.9) {};

\node [draw, shape=circle, scale=.8] (b1) at  (1, -4) {};
\node [draw, shape=circle, scale=.8] (b2) at  (2.1, -3.6) {};
\node [draw, shape=circle, scale=.8] (b3) at  (3.3, -3.6) {};
\node [draw, shape=circle, scale=.8] (b4) at  (2, -4.4) {};
\node [draw, shape=circle, scale=.8] (b5) at  (2.7, -4.4) {};
\node [draw, shape=circle, scale=.8] (b6) at  (3.4, -4.4) {};
\node [draw, shape=circle, scale=.8] (b7) at  (4.5, -4) {};
\node [draw, shape=circle, scale=.8] (b8) at  (5.5, -4) {};
\node [draw, shape=circle, scale=.8] (b9) at  (6.5, -3.6) {};
\node [draw, shape=circle, scale=.8] (b10) at  (6.5, -4.4) {};

\node [scale=1.1] at (1.1,-0.35) {$u_1$};
\node [scale=1.1] at (1.1,-1.85) {$u_2$};
\node [scale=1.1] at (1.1,-4.35) {$u_k$};
\node [scale=1.3] at (3.75,2.1) {\bf$e$};

\draw[thick](7.5, -1.35) arc (1:179:3.75);

\draw(0)--(1)--(2)--(3)--(7)--(6)--(5)--(4)--(1);\draw(7)--(8);\draw(8)--(9)--(11)--(10)--(8);
\draw(a1)--(a2)--(a3)--(a7)--(a6)--(a5)--(a4)--(a1);\draw(a7)--(a8);\draw(a9)--(a8)--(a10);
\draw(b1)--(b2)--(b3)--(b7)--(b6)--(b5)--(b4)--(b1);\draw(b7)--(b8);\draw(b9)--(b8)--(b10);
\draw(0)--(a1);\draw(0)--(b1);\draw(a9)--(11)--(a10);\draw(b9)--(11)--(b10);
\draw(9)--(10);\draw(a9)--(a10);\draw(b9)--(b10);
\draw[thick, dotted] (1,-2.2)--(1,-3.5);\draw[thick, dotted] (6.5,-2.2)--(6.5,-3.2);

\end{tikzpicture}
\caption{\small A graph $G$ such that $\dim(G) - \dim(G-e)$ can be arbitrarily large, where $k \ge 2$.}\label{fig_dim_edge}
\end{figure}
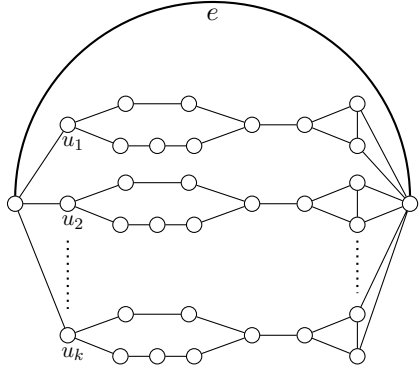

Now, we consider the effect of edge deletion on adjacency dimension. We begin with the following lemma, which is used in proving Theorem~\ref{adim_edge_deletion}. 

\begin{lemma}\label{adim_edge}
For any graph $G$, let $e=xy\in E(G)$.
\begin{itemize}
\item[(a)] If $S$ is an adjacency resolving set of $G$, then $S \cup\{x,y\}$ is an adjacency resolving set of $G-e$.
\item[(b)] If $R$ is an adjacency resolving set of $G-e$, then $R \cup\{x,y\}$ is an adjacency resolving set of $G$.
\end{itemize}
\end{lemma}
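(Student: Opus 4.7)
The guiding observation is that the functions $d_1^G(\cdot,\cdot)$ and $d_1^{G-e}(\cdot,\cdot)$ agree on every pair of vertices except the single pair $\{x,y\}$, for which $d_1^G(x,y)=1$ while $d_1^{G-e}(x,y)=2$. Consequently, whenever a vertex $z$ satisfies $\{u,z\}\neq\{x,y\}$ and $\{v,z\}\neq\{x,y\}$, we have $d_1^G(u,z)=d_1^{G-e}(u,z)$ and $d_1^G(v,z)=d_1^{G-e}(v,z)$, so $z$ adjacency-resolves $u,v$ in $G$ if and only if it adjacency-resolves them in $G-e$. The two parts will both follow from this dichotomy together with the trivial fact that a vertex $w$ always resolves itself from any other vertex (since $d_1(w,w)=0$ while $d_1(w',w)\ge 1$).

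For part (a), fix distinct $u,v\in V(G)$ and pick $z\in S$ with $d_1^G(u,z)\neq d_1^G(v,z)$. If neither $\{u,z\}$ nor $\{v,z\}$ equals $\{x,y\}$, then by the observation $z\in S\cup\{x,y\}$ still resolves $u,v$ in $G-e$. Otherwise at least one of $u,v$ lies in $\{x,y\}$ (since $u\neq v$, it is not possible that both $\{u,z\}=\{x,y\}$ and $\{v,z\}=\{x,y\}$). Say $u\in\{x,y\}$; then $u\in S\cup\{x,y\}$, and choosing $u$ itself as the resolver gives $d_1^{G-e}(u,u)=0<d_1^{G-e}(v,u)$, so $u,v$ are resolved. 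The remaining subcase $v\in\{x,y\}$ is symmetric. This shows $S\cup\{x,y\}$ is an adjacency resolving set of $G-e$.

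For part (b), the argument is completely parallel: take $R$ resolving $G-e$, fix distinct $u,v\in V(G)$, and pick $z\in R$ with $d_1^{G-e}(u,z)\neq d_1^{G-e}(v,z)$. Either $z$ avoids the special pair and still resolves $u,v$ in $G$, or else one of $u,v$ lies in $\{x,y\}$ and can be used as a self-resolver from $R\cup\{x,y\}$. Note that the boundary case $\{u,v\}=\{x,y\}$ is automatically handled in both directions, since $x\in R\cup\{x,y\}$ (respectively $S\cup\{x,y\}$) gives $d_1(x,x)=0$ and $d_1(y,x)\in\{1,2\}$.

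The only delicate point in the plan is making sure that when the original resolver $z$ fails because an incident edge was added or removed, the augmentation by the two endpoints $x,y$ really supplies a substitute; the self-resolution trick above closes this gap without needing any further structural assumption on $G$.
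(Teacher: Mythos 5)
Your proof is correct and follows essentially the same route as the paper's: both rest on the observation that $d_1$ changes only on the single pair $\{x,y\}$, so codes of all vertices outside $\{x,y\}$ are unaffected, while $x$ and $y$ are handled by being placed in the resolving set themselves (the paper leaves this self-resolution step implicit, whereas you spell it out). No gaps.
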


\begin{proof}
Let $e=xy \in E(G)$.

(a) Since $S$ is an adjacency resolving set of $G$, $S'=S\cup\{x,y\}$ is also an adjacency resolving set of $G$. Since the adjacency code of each vertex, excluding $x$ and $y$, with respect to $S'$ in $G$ remains the same in $G-e$, $S'$ is an adjacency resolving set of $G-e$.

(b) Since $R$ is an adjacency resolving set of $G-e$, $R'=R\cup\{x,y\}$ is an adjacency resolving set of $G-e$. Since the adjacency code of each vertex, excluding $x$ and $y$, with respect to $R'$ in $G-e$ remains the same in $G$, $R'$ is an adjacency resolving set of $G$.~\hfill
\end{proof}

\begin{theorem}\label{adim_edge_deletion}
For any graph $G$ and any edge $e \in E(G)$,
$\adim(G)-1 \le \adim(G-e) \le \adim(G)+1.$
\end{theorem}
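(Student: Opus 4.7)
The plan is to leverage Lemma~\ref{adim_edge} together with a simple observation about which adjacency codes change when the edge $e=xy$ is deleted. Specifically, for any vertex $z\in V(G)\setminus\{x,y\}$, the adjacency code $a_{\{z\}}(\cdot)$ is identical in $G$ and $G-e$, since $z$'s neighborhood is unaffected by the deletion of $e$. Hence only adjacency codes computed with respect to $x$ or $y$ can possibly differ between $G$ and $G-e$, and then only at the coordinate for $y$ or $x$ respectively.

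For the upper bound $\adim(G-e)\le \adim(G)+1$, I would take a minimum adjacency resolving set $S$ of $G$ and split into two cases. If $S\cap\{x,y\}\neq\emptyset$, then $|S\cup\{x,y\}|\le |S|+1$, and Lemma~\ref{adim_edge}(a) immediately gives that $S\cup\{x,y\}$ is an adjacency resolving set of $G-e$. If instead $S\cap\{x,y\}=\emptyset$, I would argue that $S$ itself already resolves $G-e$: for any distinct $u,v\in V(G)$, a resolver $z\in S$ from $G$ satisfies $z\notin\{x,y\}$, so its adjacency code is unchanged in $G-e$ and $z$ still resolves the pair there. Either branch yields $\adim(G-e)\le |S|+1=\adim(G)+1$.

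The lower bound $\adim(G)-1\le \adim(G-e)$ follows by the completely symmetric argument applied to a minimum adjacency resolving set $R$ of $G-e$: if $R\cap\{x,y\}\neq\emptyset$, then Lemma~\ref{adim_edge}(b) gives $R\cup\{x,y\}$ as an adjacency resolving set of $G$ of size at most $|R|+1$; otherwise the unchanged-code observation shows $R$ itself resolves $G$. There is no real obstacle here, because Lemma~\ref{adim_edge} does most of the heavy lifting. The one subtle point is that one cannot simply invoke the lemma uniformly (that would yield only $\adim(G-e)\le \adim(G)+2$); it is precisely the case split on whether the chosen minimum resolving set meets $\{x,y\}$ that squeezes the bound down to $+1$.
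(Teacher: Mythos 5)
Your proof is correct and follows essentially the same route as the paper: both arguments combine Lemma~\ref{adim_edge} with the observation that deleting $e=xy$ only alters adjacency codes taken with respect to $x$ or $y$, and both obtain the $+1$ (rather than $+2$) by noting that when the chosen minimum resolving set already meets $\{x,y\}$ only one new vertex is needed, while otherwise the set resolves unchanged. The paper phrases the upper-bound case split contrapositively (a pair that becomes unresolved forces $e$ to be incident to the resolving vertex), but this is the same dichotomy you use.
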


\begin{proof}
We denote by $d_{H,1}(x,y)$ the adjacency distance between two vertices $x$ and $y$ in a graph $H$.

First, we show that $\adim(G-e) \le \adim(G)+1$. Let $S$ be a minimum adjacency resolving set of $G$, and let $e\in E(G)$. Let $x,y \in V(G-e)-S=V(G)-S$ such that $z\in S$ with $d_{G,1}(x,z) \neq d_{G,1}(y,z)$. Without loss of generality, let $d_{G,1}(x,z)=1$ and $d_{G,1}(y,z)=2$; then $xz \in E(G)$. If $d_{G-e,1}(x,z)=d_{G-e,1}(y,z)$, then $e=xz$. Since $z\in S$, $S \cup\{x\}$ forms an adjacency resolving set of $G-e$ by Lemma~\ref{adim_edge}(a). Thus $\adim(G-e) \le |S|+1=\adim(G)+1$.

Second, we show that $\adim(G)-1 \le \adim(G-e)$. Let $R$ be any minimum adjacency resolving set of $G-e$, and let $e=uv\in E(G)$. If $|R \cap \{u,v\}|=0$, then each entry of $a_R(u)$ and $a_R(v)$ is 1 or 2; thus, the adjacency code of each vertex with respect to $R$ in $G-e$ remains the same in $G$, and hence $R$ is an adjacency resolving set of $G$. If $|R \cap\{u,v\}|=1$, say $u \in R$ and $v\not\in R$ without loss of generality, then $R \cup \{v\}$ forms an adjacency resolving set of $G$ by Lemma~\ref{adim_edge}(b). If $|R \cap \{u,v\}|=2$ (i.e., $u,v\in R$), then $R$ is an adjacency resolving set of $G$ by Lemma~\ref{adim_edge}(b). Therefore, $\adim(G) \le |R|+1=\adim(G-e)+1$.~\hfill 
\end{proof}

\begin{remark}
The bounds in Theorem~\ref{adim_edge_deletion} are sharp. 

(a) For a graph $G$ satisfying $\adim(G)-1=\adim(G-e)$, let $G=K_n$ for $n \ge 3$. Then $\adim(G-e)=n-2$ and $\adim(G)=n-1$.

(b) For a graph $G$ satisfying $\adim(G-e)=\adim(G)+1$, let $G$ be the graph in~Figure~\ref{fig_adim_edge}. Let $N(u_1)-\{u_2\}=\cup_{i=1}^{a}\{x_i\}$, $N(u_2)-\{u_1, u_3\}=\cup_{i=1}^{b}\{y_i\}$, and $N(u_3)-\{u_2\}=\cup_{i=1}^{c}\{z_i\}$, where $a,c\ge 3$ and $b \ge 2$.

First, we show that $\adim(G-e)=a+b+c-1$. Let $S$ be a minimum adjacency resolving set of $G-e$. Since any two vertices in $\cup_{i=1}^{a}\{x_i\}$, $\cup_{i=1}^{b}\{y_i\}$, and $\cup_{i=1}^{c}\{z_i\}$, respectively, are twin vertices in $G-e$, by Observation~\ref{obs_twin}(b), we have $|S \cap (\cup_{i=1}^{a}\{x_i\})| \ge a-1$, $|S \cap (\cup_{i=1}^{b}\{y_i\})| \ge b-1$ and $|S \cap (\cup_{i=1}^{c}\{z_i\})| \ge c-1$. Let $S'=(\cup_{i=2}^{a}\{x_i\}) \cup (\cup_{i=2}^{b}\{y_i\})\cup (\cup_{i=2}^{c}\{z_i\}) \subseteq S$. Note that (i) $a_{S'}(u_1)$ is the $(a+b+c-3)$-vector with 1 on the first $(a-1)$ entries and 2 on the rest of the entries; (ii) $a_{S'}(u_2)$ is the $(a+b+c-3)$-vector with 1 on the $a$th through $(a+b-2)$th entries and 2 on the rest of the entries; (iii)  $a_{S'}(u_3)$ is the $(a+b+c-3)$-vector with 2 on the first $(a+b-2)$ entries and 1 on the rest of the entries; (iv) $a_{S'}(x_1)=a_{S'}(y_1)=a_{S'}(z_1)={\bf{2}}_{a+b+c-3}$. Since $S'$ fails to be an adjacency resolving set of $G-e$ and, for any $w \in V(G-e)-S'$, $S' \cup \{w\}$ fails to be an adjacency resolving set of $G-e$, $\adim(G-e)\ge a+b+c-1$. On the other hand, $S' \cup \{x_1, y_1\}$ forms an adjacency resolving set of $G-e$, and hence $\adim(G-e) \le a+b+c-1$. Thus, $\adim(G-e)=a+b+c-1$.

Next, let $e=x_1z_1$. We show that $\adim(G)=a+b+c-2$. By Theorem~\ref{adim_edge_deletion}, $\adim(G)\ge a+b+c-2$. Since $R=(\cup_{i=1}^{a}\{x_i\}) \cup (\cup_{i=2}^{b}\{y_i\})\cup (\cup_{i=2}^{c}\{z_i\})$ forms an adjacency resolving set of $G$ with $|R|=a+b+c-2$, $\adim(G) \le a+b+c-2$. Thus, $\adim(G)=a+b+c-2$. 
\end{remark}

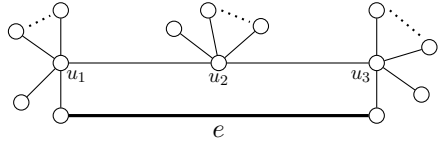
\begin{figure}[ht]
\centering
\begin{tikzpicture}[scale=.7, transform shape]

\node [draw, shape=circle, scale=.8] (0) at  (0, 0) {};
\node [draw, shape=circle, scale=.8] (1) at  (3, 0) {};
\node [draw, shape=circle, scale=.8] (2) at  (6, 0) {};

\node [draw, shape=circle, scale=.8] (01) at  (0, 1) {};
\node [draw, shape=circle, scale=.8] (02) at  (-0.85, 0.6) {};
\node [draw, shape=circle, scale=.8] (04) at  (-0.75, -0.75) {};
\node [draw, shape=circle, scale=.8] (05) at  (0, -1) {};

\node [draw, shape=circle, scale=.8] (11) at  (2.15, 0.65) {};
\node [draw, shape=circle, scale=.8] (12) at  (2.8, 1) {};
\node [draw, shape=circle, scale=.8] (13) at  (3.8, 0.7) {};

\node [draw, shape=circle, scale=.8] (21) at  (6, 1) {};
\node [draw, shape=circle, scale=.8] (22) at  (7, 0.3) {};
\node [draw, shape=circle, scale=.8] (23) at  (6.85, -0.6) {};
\node [draw, shape=circle, scale=.8] (24) at  (6, -1) {};

\node [scale=1] at (0.3,-0.25) {$u_1$};
\node [scale=1] at (3,-0.3) {$u_2$};
\node [scale=1] at (5.7,-0.25) {$u_3$};

\node [scale=1.3] at (3,-1.3) {$e$};

\draw(05)--(0)--(1)--(2)--(23);\draw(11)--(1)--(12);\draw(21)--(2)--(22);\draw(01)--(0);\draw[very thick](05)--(24);\draw(0)--(04);\draw(1)--(13);\draw(2)--(24);\draw(02)--(0);
\draw[thick, dotted] (-0.6,0.7)--(-0.2,0.9);\draw[thick, dotted] (3.05,1)--(3.59,0.78);\draw[thick, dotted] (6.25,1)--(6.85,0.5);

\end{tikzpicture}
\caption{\small A graph $G$ with $\adim(G-e)=\adim(G)+1$.}\label{fig_adim_edge}
\end{figure}

\begin{question}
Is $\bdim(G-e) \le \bdim(G)+d_{G-e}(u,v)-1$ for any graph $G$, where $e=uv\in E(G)$?
\end{question}

\begin{question}
Is there a family of graphs $G$ such that $\bdim(G)-\bdim(G-e)$ grows arbitrarily large?
\end{question}

\section{Open Problems}\label{s:open}

Below are some open problems about broadcast dimension that are only partially answered by the results in this paper.

\begin{question}\label{q1} What graphs $G$ satisfy $\dim(G)=\bdim(G)$? \end{question}

\begin{question}\label{q2} What graphs $G$ satisfy $\bdim(G)=\adim(G)$? \end{question}

Proposition~\ref{tree_bdim}, Corollary~\ref{tree_b_cor}, and the results in Section \ref{s:highlow} make some progress toward answering Questions \ref{q1} and \ref{q2}.

\begin{question} \label{q3} Is there a family of graphs $G_k$ with $\bdim(G) = k$ for which $\adim(G) = 2^{\Omega(k)}$? \end{question}

Theorem \ref{comparison_3dim} shows that for each $d \geq 1$ there is a family of graphs $G_k$ with $\bdim(G) = k$ for which $\adim(G) =\Omega(k^d)$.

\begin{question}\label{q4} What are the values of $\bdim(T)$ and $\adim(T)$ for every tree $T$?\end{question}

Proposition \ref{tree_bdim} and Corollary \ref{tree_b_cor} make progress on Question \ref{q4}.

\begin{question}\label{q5} Is there a polynomial-time algorithm to determine the value of $\bdim(G)$ for every graph $G$? \end{question}

Heggernes and Lokshtanov \cite{broadcast_complexity2} found a polynomial-time algorithm for computing the broadcast domination number $\gamma_b(G)$. This differs from the standard domination number \cite{NP} and some of its variants \cite{henning, slater0}, which are NP-hard.

Another natural algorithmic problem is to list all minimum resolving broadcasts of a given graph. In the worst-case, any algorithm to solve this problem must take $2^{\Omega(n)}$ time for a graph of order $n$. We find an algorithm that takes $2^{O(n)}$ time to list all minimum resolving broadcasts of any given graph of order $n$.

\begin{theorem}
There is an algorithm that takes $2^{O(n)}$ time to list all minimum resolving broadcasts of any given graph of order $n$. Any algorithm for listing all minimum resolving broadcasts of a given graph of order $n$ must take $2^{\Omega(n)}$ time in the worst-case.
\end{theorem}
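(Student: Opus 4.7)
The plan is to prove the upper and lower bounds separately. For the upper bound, I would give a brute-force enumeration algorithm. By Observation~\ref{obs_bdim}(a), any minimum resolving broadcast $f$ satisfies $\sum_v f(v) \le \adim(G) \le n-1$, and moreover we may assume without loss of generality that $f(v) \le n-1$ for every $v$, since replacing $f(v)$ by $\min\{f(v), \diam(G)\}$ leaves every value $d_{f(v)}(x,v)$ unchanged and hence preserves the resolving property. Thus the algorithm only needs to enumerate the non-negative integer-valued functions $f$ on $V(G)$ with $\sum_v f(v) \le n-1$; by the hockey stick identity the number of these is $\binom{2n-1}{n-1} = 2^{O(n)}$. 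For each candidate, I would compute the vector $b_{\supp_G(f)}(v)$ for every $v \in V(G)$ in polynomial time and verify pairwise distinctness; any non-resolving broadcast is discarded, the minimum cost $c^\ast$ is recorded, and a second pass outputs exactly those $f$ with $c_f(G)=c^\ast$. The total running time is $2^{O(n)} \cdot \mathrm{poly}(n) = 2^{O(n)}$.

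For the lower bound, I would exhibit a single family of graphs with $2^{\Omega(n)}$ distinct minimum resolving broadcasts. Take $n$ even and let $G_n$ be the disjoint union of $n/2$ copies of $K_2$. The two vertices of each $K_2$ are twins, so Observation~\ref{obs_twin}(c) forces every resolving broadcast to assign a positive value to at least one vertex per component; hence $\bdim(G_n) \ge n/2$. A matching broadcast is obtained by choosing one vertex $v_i$ in the $i$th copy and setting $f(v_i)=1$: each vertex's broadcast code has a $0$ or $1$ in the coordinate for $v_i$ in its own component (depending on whether it equals $v_i$ or its partner) and a $2$ in every other coordinate, since vertices in different components satisfy $d(x,y)=\infty$, hence $d_1(x,y)=2$. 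All codes are distinct and the total cost is exactly $n/2$. Since each of the $n/2$ twin pairs must contribute at least $1$ to the total, no minimum broadcast can assign $f(v)\ge 2$ to any vertex; every minimum broadcast therefore has the form above, yielding exactly $2^{n/2}$ of them. Any algorithm listing all of them must use at least $2^{n/2}=2^{\Omega(n)}$ time just to write the output.

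The main obstacle will be verifying the two accounting steps feeding these bounds. On the upper-bound side, I need the tight $2^{O(n)}$ bound on the candidate space, which requires combining the cost cap from Observation~\ref{obs_bdim}(a) with the pointwise cap obtained by truncating at the diameter; without the cost cap the naive enumeration only gives $n^n = 2^{O(n\log n)}$. On the lower-bound side, the delicate point is ruling out minimum broadcasts that place more than one unit of mass on a single vertex or that leave an entire twin pair blank, both of which follow from the twin-pair inequality $\bdim(G_n)\ge n/2$ being tight. Beyond these checks, each half of the argument is routine.
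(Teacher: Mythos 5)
Your proposal is correct and matches the paper's proof in all essentials: the upper bound comes from enumerating the at most $\binom{2n-1}{n}=2^{O(n)}$ candidate functions of total cost at most $n-1$ (justified by $\bdim(G)\le\adim(G)\le n-1$) and checking each in polynomial time, and the lower bound comes from a disjoint union of copies of $K_2$, where the twin condition pins down exactly $2^{n/2}$ minimum resolving broadcasts. The only cosmetic differences are that the paper increments the target cost $s$ and halts at the first feasible level rather than doing your two-pass sweep, and it also covers odd $n$ by adjoining an isolated vertex to the matching --- a one-line addition you would want for completeness.
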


\begin{proof}
For the worst-case, note that the graph $H_k$ on $2k$ vertices consisting of $k$ copies of $K_2$ has $2^{\Omega(k)}$ minimum resolving broadcasts, and so does the graph $H'_k$ on $2k+1$ vertices consisting of $k$ copies of $K_2$ and an isolated vertex, so any algorithm for listing all minimum resolving broadcasts of a given graph of order $n$ must take $2^{\Omega(n)}$ on the families $H_k$ and $H'_k$. 

For an algorithm to list all minimum resolving broadcasts of any given graph $G$ of order $n$, we let $v_1, \dots, v_n$ be the vertices of $G$. Let $s = 0$ and perform the following steps:
\begin{enumerate} 
\item Increment $s$. Let $S = \emptyset$. 
\item For each nonnegative integer solution $(x_1, \dots, x_n)$ to the equation $x_1+\dots+x_n = s$, determine if the function $f$ defined by setting $f(v_i) = x_i$ is a resolving broadcast for $G$. If $f$ is a resolving broadcast, add $f$ to $S$. If $S$ is nonempty after checking every solution $(x_1, \dots, x_n)$, return $S$ and halt. Otherwise go back to step $1$.
\end{enumerate}

There are $\binom{s+n-1}{n-1}$ nonnegative integer solutions $(x_1, \dots, x_n)$ to the equation $x_1+\dots+x_n = s$, so the algorithm only checks at most $\binom{2n-2}{n-1} = 2^{O(n)}$ solutions for each value of $s$. For each solution $(x_1, \dots, x_n)$, it takes polynomial time in $n$ to determine whether the solution corresponds to a resolving broadcast for $G$. Thus the algorithm has $2^{O(n)}$ running time for graphs $G$ of order $n$.
\end{proof}

\section*{Acknowledgements}The authors would like to thank Cong X. Kang for his initial discussion on the definition of broadcast dimension. The authors also wish to thank the organizers of the $22^{nd}$ conference of the International Linear Algebra Society, July 8-12, 2019 in Rio de Janeiro, Brazil, where we initially discussed the definition of broadcast dimension.


\end{document}